\newtheorem{theorem}{Theorem}[section]
\newtheorem{lemma}[theorem]{Lemma}
\newtheorem{problem}{Problem}[section]
\newtheorem{proposition}{Proposition}[section]
\theoremstyle{definition}
\theoremstyle{remark}
\newtheorem{remark}[theorem]{Remark}
\numberwithin{equation}{section}
\begin{document}

\title{Supersonic flow of Chaplygin gas past a delta wing}

\author{Bingsong Long}
\address{School of Mathematical Sciences, Fudan University, Shanghai, 200433, People's Republic of China}
\email{bslong15@fudan.edu.cn}
\thanks{\textbf{Funding}: This work was supported in part by the Fundamental Research Funds for the Central Universities (2019kfyXJJS134) and China Scholarship Council (201506100083).}

\author{CHAO YI}
\address{Center for Mathematical Sciences, Huazhong University of Science and Technology, Wuhan, 430074, People's Republic of China}
\email{chaoyi@hust.edu.cn}

\subjclass[2000]{35L65, 35L67, 35J25, 35J70, 76N10}

\date{\today}


\keywords{supersonic flow, delta wing, Chaplygin gas, boundary value problem, equation of mixed type}

\begin{abstract}
We consider the problem of supersonic flow of a Chaplygin gas past a delta wing with a shock or rarefaction wave attached to the leading edges. The flow under study is described by the three-dimensional steady Euler system. In conical coordinates, this problem can be reformulated as a boundary value problem for a nonlinear equation of mixed type. The type of this equation depends fully on the solutions of the problem itself, and thus it cannot be determined in advance. We overcome the difficulty by establishing a crucial Lipschitz estimate, and finally prove the unique existence of the solution via the method of continuity.
\end{abstract}

\maketitle



\section{Introduction}\label{sec:1}
The problem of supersonic flow over delta wings is of great importance in aeronautics, because most supersonic aircraft, like hypersonic planes or missiles, are designed as a triangle or a body having a delta wing (see \cite{AL85}). When a sharp edged delta wing is placed at a small angle of attack in supersonic flow, there arise a shock front on its compression side and a rarefaction wave on its expansion side \cite{RK84}. The shock or the rarefaction wave may be attached to or detached from the leading edges, depending on the Mach number in the flow, the angle of attack and the sweep angle of the wing. During the past decades, many experimental and computational efforts have been made to investigate this problem; see \cite{Baba63,PB69,HH85,Fowe56,Hui71,MW84,Vosk68} and the references therein. However, there has been no rigorous mathematical theory for the global existence of solutions even until now. For the case of a three-dimensional wedge, which can be regarded as the most special delta wing, some related results were announced in \cite{CFa17,Chen92,LXY15}. Under the assumption that the sweep angle is nearly close to zero, the global existence of conical solutions was obtained by Chen-Yi \cite{CY15}. Also, see Chen \cite{Chen97} for a linear approximate solution under the same assumption. In practice, the sweep angle of supersonic aircraft is not that small, so it is still necessary to develop a more general theory.

In this paper, we mainly focus on the study of the above problem for the flow of a Chaplygin gas \cite{Chap04}. The Chaplygin gas is a perfect fluid obeying the following state of equation:
\begin{equation}\label{eq:1.1}
	p(\rho)=A\Big(\frac{1}{\rho_{*}}-\frac{1}{\rho}\Big),
\end{equation}
where $p, \rho>0$ are pressure and density, respectively; $\rho_{*}, A$ are positive constants. As a model of cosmology, the Chaplygin gas can be used to describe the expansion of the universe (see, for instance, \cite{KMP01,Popov10}). It follows from \eqref{eq:1.1} that $\rho{c}=\sqrt{A}$, where $c=c(\rho)$ is the speed of sound. This implies that any shock is reversible and characteristic; see \cite{Serre09,Serre11} for more details. In other words, any rarefaction wave can be treated as a shock but with negative strength. This allows us to discuss the case of rarefaction waves in the same way as the case of shocks. Based on the special properties of the Chaplygin gas, some multidimensional Riemann problems have been well studied; for example, see Serre \cite{Serre09,Serre11}, Chen-Qu \cite{CQ12,CQu12} and Lai-Sheng \cite{SL16}. In what follows, we will first investigate the problem for the special case of a triangular plate, and then in \Cref{sec:4} turn our attention to some thin delta wings with specific shapes. Hereafter, both shocks and rarefaction waves are called pressure waves, and only the attached case is considered.

Now we describe our problem in more details. Let $W_{\sigma}$ denote a flat, infinite-span delta wing in which the angle of apex is $\pi-2\sigma$ with $\sigma\in(0,\pi/2)$. In the rectangular coordinates $(x_1,x_2,x_3)$, it is placed symmetrically on the $x_2Ox_3$-plane, with the apex at the origin and the root chord along the positive $x_3$-axis, that is,
\begin{equation}\label{eq:1.2}
	W_{\sigma}=\{(x_1,x_2,x_3):x_{1}=0, |x_{2}|<x_{3}\cot\sigma, x_{3}>0\}.
\end{equation}
Thus, the sweep angle of $W_{\sigma}$ at the leading edges is just $\sigma$ (see \Cref{fg1}). Throughout the paper, the oncoming flow of uniform state $(\rho_{\infty},q_{\infty})$ is assumed to be supersonic, passing the wing $W_{\sigma}$ at an angle of attack $\alpha$, where $\alpha\in(0,\pi/2)$. Then, the velocity $\bm{v}_{\infty}:=(v_{1\infty},0,v_{3\infty})$ of the oncoming flow is given by $v_{1\infty}=q_{\infty}\sin\alpha$ and $v_{3\infty}=q_{\infty}\cos\alpha$. Let $x_1=s(x_{2},x_{3})$ and $x_1=r(x_{2},x_{3})$ be the equations for the shock and the rarefaction wave attached to the leading edges, respectively. Clearly, both $s$ and $r$ are homogeneous functions of degree one. Write
\begin{align*}
	\mathcal{R}_{\sigma}&:=\{s(x_{2},x_{3})<x_{1}<0,x_{2}>0,x_{3}>0\},\\
	\mathcal{R}_{\sigma}'&:=\{0<x_{1}<r(x_{2},x_{3}),x_{2}>0,x_{3}>0\}.
\end{align*}
By the symmetry of $W_{\sigma}$, it suffices to discuss the problem in the region $\mathcal{R}_{\sigma}$ and $\mathcal{R}_{\sigma}'$. For the attached pressure waves, since the flow field in the compression region and the expansion region are independent, we are allowed to consider the case of shocks and the case of rarefaction waves separately.

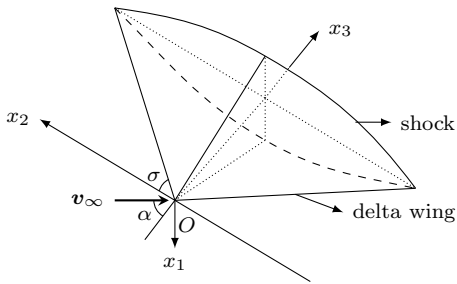
\begin{figure}[H]
	\centering
	\begin{tikzpicture}[smooth, scale=0.8]
	\draw  [-latex] (8.25,1.45)--(3.75,4.15) node [left] {\footnotesize$x_{2}$};
	\draw  [densely dotted] (5,6)--(10,3);
	\draw  (5,6)--(6,2.8)--(10,3);
	\draw  [densely dotted] (7.5,3.8)--(6,2.8)--(7.5,4.5);
	\draw  [densely dotted] (7.5,3.8)--(7.5,5.2);
	\draw  (7.5,5.2)--(6,2.8);
	\draw (5,6) to [out=-8,in=150]  (7.5,5.2) to [out=-30,in=127]  (10,3);
	\draw [dashed] (5,6) to [out=-45,in=150]  (7.5,3.8) to [out=-30,in=-192] (10,3);
	\draw  [-latex] (6,2.8)--(6,2) node [below] {\footnotesize$x_{1}$};
	\node at (6.2,2.4){\footnotesize$O$};
	\draw (6,2.8)--(5.5,2.145);
	\draw  [densely dotted]  (7.5,4.5)--(7.9,5.01);
	\draw  [-latex]  (7.9,5.01)--(8.4,5.625)node [right] {\footnotesize$x_{3}$};
	\draw [thick, -stealth] (5,2.8) node[left, font=\footnotesize] {\footnotesize $\bm{v}_{\infty}$}  --(5.9,2.8) ;
	\draw (5.9,3.15)arc (116:165:0.3);
	\node at (5.65,3.2){\footnotesize$\sigma$};
	\draw (5.65,2.8)arc (180:240:0.3);
	\node at (5.5,2.55){\footnotesize$\alpha$};
	\draw [-latex](8,2.9)--(8.8,2.6) node[right] {\footnotesize{delta wing}};
	\draw [-latex](9,4.1)--(9.6,4.1) node[right] {\footnotesize{shock}};
	\end{tikzpicture}
	\caption{Flat delta wing and attached pressure waves.}
	\label{fg1}
\end{figure}

It is shown in \cite{Serre09,Serre11} that, if a piecewise smooth steady flow of a Chaplygin gas is isentropic and irrotational initially, then it remains so forever. Recall that the oncoming flow has been assumed to be uniform. Hence, the flow under consideration is exactly potential flow. Let us introduce a velocity potential $\Phi$ by $\bm{v}=\nabla_{\bm{x}}\Phi$, where $\bm{x}:=(x_1,x_2,x_3)$. Then the flow is governed by the conservation of mass
\begin{equation}\label{eq:1.3}
	\mathrm{div}_{\bm{x}}(\rho \nabla_{\bm{x}}\Phi)=0
\end{equation}
and the Bernoulli equation
\begin{equation}\label{eq:1.4}
	\frac{1}{2}|\nabla_{\bm{x}}\Phi|^{2}+h(\rho)=\frac12B_{\infty},
\end{equation}
where $\rho$ and $h(\rho)$ are the density and specific enthalpy, respectively; $B_{\infty}/2$ is the Bernoulli constant determined by the oncoming flow, that is,
\begin{equation}\label{eq1:1}
	B_{\infty}=q_{\infty}^2-c_{\infty}^2.
\end{equation}
Combining \eqref{eq:1.1} and \eqref{eq:1.4}, we can express $\rho$ as a function of $\Phi$, i.e.,
\begin{equation}\label{eq:2.29}
	\rho=\frac{\sqrt{A}}{\sqrt{|\nabla_{\bm{x}}\Phi|^{2}-B_{\infty}}}.
\end{equation}
Then we obtain a quasilinear equation for $\Phi$, by substituting \eqref{eq:2.29} into \eqref{eq:1.3}.

Next, we denote by $S_{\sigma}$ and $S_{\sigma}'$ the shock and the rarefaction wave, respectively. Then the Rankine--Hugoniot condition yields
\begin{equation}\label{eq:1.19}
	[\rho \nabla_{\bm{x}}\Phi]\cdot\bm{n}_{s}=0\quad \text{on $S_{\sigma}$},
\end{equation}
where $[\cdot]$ denotes the jump of quantities across the shock, and $\bm{n}_{s}$ is the exterior normal to $S_{\sigma}$. Noting that any shock is a characteristic, the boundary condition \eqref{eq:1.19} is naturally satisfied. Likewise, we have the same conclusion for the rarefaction wave.

Consequently, our problem for the flow of a Chaplygin gas can be formulated mathematically as

\begin{problem}\label{prob1}
	For the wing $W_{\sigma}$ and the oncoming flow given above, we wish to seek a solution $\Phi$ of system \eqref{eq:1.3}--\eqref{eq:1.4} in the region $\mathcal{R}_{\sigma}$ (resp., $\mathcal{R}_{\sigma}'$) with the Dirichlet boundary condition
	\begin{equation}\label{eq1:1.1}
		\Phi=\Phi_{\infty}\quad\text{on $S_{\sigma}$ (resp., $S_{\sigma}'$)}
	\end{equation}
	and the slip boundary conditions
	\begin{align}
		\nabla_{\bm{x}}\Phi\cdot\bm{n}_{w}=0 \quad&\text{on $\{x_1=0\}$},\label{eq2:1.2}\\
		\nabla_{\bm{x}}\Phi\cdot\bm{n}_{sy}=0\quad &\text{on $\{x_2=0\}$},\label{eq1:1.2}
	\end{align}
	where $\Phi_{\infty}=v_{1\infty}x_{1}+v_{3\infty}x_{3}$ is the potential of the oncoming flow; $\bm{n}_{w}=(1,0,0)$ is the exterior normal to $\{x_1=0\}$, and $\bm{n}_{sy}=(0,-1,0)$ is the exterior normal to $\{x_2=0\}$.
\end{problem}

The following theorem is the main result of this paper.

\begin{theorem}[Main Theorem]\label{thm: Main}
	Assume that the state $(\rho_{\infty},q_{\infty})$ of the oncoming flow is uniform and supersonic, and the wing $W_{\sigma}$ is a triangular plate given by \eqref{eq:1.2}. Then, for the case of shocks, we can find a critical angle $\alpha_{0}=\alpha_{0}(\rho_{\infty},q_{\infty})\in(0,{\pi}/{2})$ so that for any fixed $\alpha\in(0,\alpha_{0})$, there exists $\sigma_{0}=\sigma_{0}(\rho_{\infty},q_{\infty},\alpha)\in(0,{\pi}/{2})$ such that, when $\sigma\in[0,\sigma_{0}]$, \cref{prob1} admits a piecewise smooth solution.
	
	Similarly, by replacing the angle $\alpha_{0}$ with $\alpha'_{0}\in(0,{\pi}/{2})$, the angle $\sigma_{0}$ with $\sigma'_{0}\in(0,\sigma_0)$, and the condition $\sigma\in[0,\sigma_{0}]$ with $\sigma\in[0,\sigma'_{0})$, we obtain the same result for the case of rarefaction waves.
\end{theorem}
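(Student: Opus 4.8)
The plan is to exploit the conical (degree-one homogeneous) structure of the whole configuration to collapse the three-dimensional problem to a fixed two-dimensional boundary value problem, and then to run a continuity argument in the sweep angle $\sigma$, starting from the purely two-dimensional profile at $\sigma=0$. First I would introduce conical coordinates, writing $\Phi=x_{3}\,\phi(\xi,\eta)$ with $\xi=x_{1}/x_{3}$ and $\eta=x_{2}/x_{3}$, so that the degree-one homogeneity of $\Phi$, $S_{\sigma}$ and $W_{\sigma}$ reduces the system \eqref{eq:1.3}--\eqref{eq:1.4} (after eliminating $\rho$ via \eqref{eq:2.29}) to a single second-order quasilinear equation for $\phi$ on the fixed planar domain $\Omega$ obtained by projecting $\mathcal{R}_{\sigma}$ (resp.\ $\mathcal{R}_{\sigma}'$). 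Because any shock of a Chaplygin gas is characteristic, the Rankine--Hugoniot relation \eqref{eq:1.19} is automatic, so the free boundary $S_{\sigma}$ is effectively fixed in these coordinates; this turns \cref{prob1} into a genuine (non-free) boundary value problem, with \eqref{eq1:1.1} supplying a Dirichlet datum on the projected pressure wave and the slip conditions \eqref{eq2:1.2}--\eqref{eq1:1.2} becoming oblique/Neumann-type conditions on the wing face and the symmetry plane, which bound $\Omega$ together with the pressure wave.

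The reduced equation is elliptic where the conical cross-flow is subsonic and hyperbolic where it is supersonic, and the separating sonic curve is determined by $\phi$ itself; this is exactly the mixed-type difficulty flagged in the abstract, since the type cannot be fixed before the solution is known. To handle it I would set
\[
\Sigma := \{\sigma\in[0,\sigma_{0}] : \text{the reduced problem is solvable by a piecewise smooth }\phi\}
\]
and prove $\Sigma=[0,\sigma_{0}]$ by the method of continuity. Nonemptiness is the base case $\sigma=0$: there the delta wing degenerates ($\cot\sigma\to\infty$) to a two-dimensional flat plate, the cross-flow variable $\eta$ drops out, and the reduced problem becomes the classical planar attached-shock (or centered-rarefaction) problem, which is explicitly solvable and which fixes $\alpha_{0}$ (resp.\ $\alpha_{0}'$) as the admissible deflection threshold ensuring attachment for the angle of attack $\alpha$.

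Openness of $\Sigma$ I would obtain by linearizing the reduced equation and its boundary operators about a solution $\phi_{\sigma_{1}}$ at a fixed $\sigma_{1}\in\Sigma$ and applying the implicit function theorem in weighted H\"older spaces adapted to the corner singularities of $\Omega$; the crucial point is that the Lipschitz bound keeps $\phi_{\sigma_{1}}$ strictly on the controlled (hyperbolic) side, so that the linearized operator is well posed and boundedly invertible uniformly for nearby $\sigma$. Closedness is where the a priori estimates enter: I would combine maximum-principle and energy estimates with the crucial Lipschitz estimate established earlier to bound the relevant norm of $\phi_{\sigma}$ independently of $\sigma\in\Sigma$, so that along any sequence $\sigma_{n}\to\bar\sigma$ the solutions $\phi_{\sigma_{n}}$ subconverge to a limit solving the problem at $\bar\sigma$, whence $\bar\sigma\in\Sigma$ and the topological loop closes on the connected interval $[0,\sigma_{0}]$.

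The main obstacle is unquestionably the mixed type: both the invertibility of the linearization (openness) and the uniform a priori bound (closedness) must be carried out without presupposing the type of the equation, because the sonic curve moves with the unknown. The entire argument therefore hinges on the Lipschitz estimate, which must be strong enough to control $\nabla\phi$ uniformly in $\sigma$, thereby pinning down the location of the sonic set and taming the degeneracy there; making this estimate hold up to the corners of $\Omega$ and uniformly as $\sigma$ varies is the technical heart of the proof, and it is precisely what forces the smallness of $\sigma_{0}$. Finally, the rarefaction case needs no separate machinery: since for a Chaplygin gas every rarefaction wave is a characteristic shock of negative strength, the identical reduction and continuity argument applies verbatim, yielding the companion thresholds $\alpha_{0}'$ and $\sigma_{0}'$ and the stated conclusion for $\sigma\in[0,\sigma_{0}')$.
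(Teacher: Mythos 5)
Your conical reduction and the overall ``a priori estimate plus method of continuity'' skeleton match the paper, but there are two genuine gaps. First, you have the type of the equation backwards: inside the disturbed conical region $\Omega$ the solution must be kept on the \emph{elliptic} side ($L<1$, equivalently $\psi>\sqrt{1+|\bm{\xi}|^{2}}$), not the hyperbolic one. The Dirichlet/mixed problem posed on $\Omega$ is only well posed, and the linearized operator only invertible by the Fredholm alternative, when the equation is locally uniformly elliptic there; the hyperbolic region is $U\setminus\Omega$, where the flow is the explicit uniform state. Ruling out parabolic degeneracy and proving the strict inequality $\psi>\sqrt{1+|\bm{\xi}|^{2}}$ is precisely the content of the key estimate, and your proposal never supplies a mechanism for it: the paper gets it from a comparison principle (obtained via the substitutions $w=\psi/\sqrt{1+|\bm{\xi}|^{2}}$ and $w=\cosh z$, which make the zeroth-order term monotone) together with a family of exact linear solutions $w^{\bm{\eta}}$ used as barriers on the \emph{convex} reflected domain $\Omega_{ext}$; the interior gradient bound then follows from a maximum principle for $|D\psi|^{2}$. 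Invoking ``the crucial Lipschitz estimate established earlier'' as an input to closedness is circular, since that estimate is the technical heart of the whole theorem.

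Second, your continuation parameter is the wrong one. Running continuity in $\sigma$ from $\sigma=0$ forces you to work on a family of domains that degenerates as $\sigma\to0^{+}$ (the vertex $P_{5}=(0,\cot\sigma)$ escapes to infinity and the disturbed conical region disappears), and you would need uniform invertibility of the linearization across these varying, degenerating domains. The paper instead fixes $\sigma\in(0,\sigma_{0}]$ and performs the homotopy in the \emph{equation}: a parameter $\mu\in[0,1]$ interpolates from a linear elliptic equation ($\mu=0$, solvable by the Fredholm alternative) to the full nonlinear one ($\mu=1$), with an auxiliary $\varepsilon$-lift of the Dirichlet datum sent to zero at the end, the a priori bounds being uniform in $\mu$ and $\varepsilon$. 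Finally, $\sigma_{0}$ is not produced by a smallness requirement in the estimates, as you suggest; it is the explicit critical angle $\arcsin\bigl(\sqrt{q_{\infty}^{2}-c_{\infty}^{2}}/v_{3\infty}\bigr)$ coming from the shock-polar attachment/concentration condition, and the result holds on the whole interval $[0,\sigma_{0}]$ rather than for small sweep angles only.
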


Although \cref{prob1} is three dimensional in nature, it can be treated mathematically as a two-dimensional one, due to the features of the wing $W_{\sigma}$ and the resulting conical flow. In fact, we can view the scaled variables $\xi_1=x_1/x_3$ and $\xi_2=x_2/x_3$ as new coordinates, and then restate \cref{prob1} in the $(\xi_{1},\xi_{2})$ coordinates, as demonstrated in \Cref{sec:2.3}. This finally leads to a boundary value problem for a nonlinear mixed-type equation; see \cref{prob2}. Unlike those discussed in \cite{CFe10,CY15}, the type of our equation is far from being known, due to the fact that the sweep angle of $W_{\sigma}$ is no longer sufficiently small. Moreover, from \eqref{eq:1.1} and the equation of state for a polytropic gas, namely, $p(\rho)=A\rho^{\gamma}$ with constants $A,\gamma>0$, we obtain $\gamma=-1$ for the Chaplygin gas. This means that the ellipticity principle proved by Elling-Liu \cite{EL05} for self-similar potential flow cannot be applied to our problem either, because the approach adopted there is valid only for $\gamma>-1$. Motivated by Serre \cite{Serre09,Serre11}, we find that the type of our equation can be determined completely by a prior estimates for the solutions of \cref{prob2}. It should be pointed out that, our estimates can be established under a weaker condition that may allow the nonlinear equation degenerate inside the domain. By contrast, the corresponding equation in \cite{Serre11} is assumed to be elliptic over the whole domain. We also note that in \cite{Serre11} only the Dirichlet problem has been studied. In \Cref{sec:4}, with some new ingredients added into the strategy mentioned above, we are able to treat a more general class of boundary value problems, such as those with mixed boundary conditions or in a Lipschitz domain.

The rest of the paper is arranged as follows. \Cref{sec:2} mainly presents some preliminaries to the investigation of \cref{prob1}. We first determine the undisturbed states of the downstream flow near the leading edges of the wing, and then analyze the global structures of the pressure waves in conical coordinates. After that, we reduce \cref{prob1} to a boundary value problem for a nonlinear mixed-type equation, that is, \cref{prob2}, and meanwhile we rewrite the main theorem as \Cref{thm4}. \Cref{sec:3} is devoted to the proof of \Cref{thm4}. With the use of two key auxiliary functions, we establish a prior estimates for the solutions of \cref{prob2}, which ensures the ellipticity of the equation under consideration, and therefore allows us to obtain the existence and uniqueness for \cref{prob2}. \Cref{sec:4} considers the problem of supersonic flow over a thin delta wing of diamond cross-sections. We achieve a similar result by solving a Neumann boundary value problem in a Lipschitz domain. \Cref{sec:5} gives a brief discussion of the difficulties that arise from the non-convexity of the domains in the study of this problem.

\section{Preliminary analysis of \cref{prob1}}\label{sec:2}
When the wing $W_{\sigma}$ becomes a half-plane, i.e., $\sigma=0$, \cref{prob1} is essentially a two-dimensional problem, so it can be solved by the analysis of shock polars, as shown in \Cref{sec:appendix a}. From now on, we only consider the case $\sigma>0$.
\subsection{Uniform downstream flow near the leading edges}\label{sec:2.1}
Let us begin with the potential equation \eqref{eq:1.3}. Expanding \eqref{eq:1.3}, together with \eqref{eq:2.29}, we obtain a quasilinear equation of second order
\begin{multline}\label{eq:2.30}
	(c^{2}-\Phi^{2}_{x_{1}})\Phi_{x_{1}x_{1}}+(c^{2}-\Phi^{2}_{x_{2}})\Phi_{x_{2}x_{2}}+(c^{2}-\Phi^{2}_{x_{3}})\Phi_{x_{3}x_{3}}\\-2\Phi_{x_{1}}\Phi_{x_{2}}\Phi_{x_{1}x_{2}}-2\Phi_{x_{1}}\Phi_{x_{3}}\Phi_{x_{1}x_{3}}-2\Phi_{x_{2}}\Phi_{x_{3}}\Phi_{x_{2}x_{3}}=0.
\end{multline}
The characteristic equation of $\eqref{eq:2.30}$ is
\begin{equation}\label{eq:2.31}
	Q(\bm{\zeta})=c^{2}-|\nabla_{\bm{x}}\Phi\cdot\bm{\zeta}|^{2} \quad\text{for any}~\bm{\zeta}\in\mathbb{R}^{3},~|\bm{\zeta}|=1,
\end{equation}
so equation \eqref{eq:2.30} is elliptic in a subsonic domain and hyperbolic in a supersonic domain. Also, since any pressure wave is a characteristic, the normal component of the flow velocity across the pressure waves is sonic. Then, if a stream of flow is initially supersonic, it stays so forever. This means that the downstream flow is supersonic, under the assumption that the oncoming flow is supersonic. Accordingly, equation \eqref{eq:2.30} is hyperbolic and there exists a Mach cone of the apex of the wing. Thanks to the property of finite propagation for hyperbolic equations, the solution of $\eqref{eq:2.30}$ outside the Mach cone is undisturbed, so it can be analyzed in the same way as that for supersonic flow past a wedge. In particular, for a uniform state of the downstream flow, the pressure wave is flat and attached to the leading edges of the wing. For simplicity of notation, we use $S_{ob}$ and $S'_{ob}$, respectively, to denote the flat shock and flat rarefaction wave.

We then calculate the solution of $\eqref{eq:2.30}$ outside the Mach cone. Let us first consider the case of shocks. Since the oncoming flow is not perpendicular to the leading ledges, we introduce
\begin{equation*}
	\bm{e}_1:=(1,0,0), \quad \bm{e}_i:=(0,\cos\sigma,\sin\sigma),\quad  \bm{e}_j:=(0,-\sin\sigma,\cos\sigma).
\end{equation*}
Obviously, $\{\bm{e}_1,\bm{e}_i,\bm{e}_j\}$ is an orthogonal basis for a system of coordinates $(x_1,x_i,x_j)$. Then the velocity $\bm{v}_{\infty}$ is decomposed as
\begin{equation}\label{eq:2.1}
	\bm{v}_{\infty}=v_{1\infty}\bm{e}_1+v_{3\infty}\sin\sigma\bm{e}_i+v_{3\infty}\cos\sigma\bm{e}_j.
\end{equation}
For notational convenience, we write
\begin{equation*}
	\tilde{\bm{v}}_{\infty}=v_{1\infty}\bm{e}_1+v_{3\infty}\cos\sigma\bm{e}_j,
\end{equation*}
thus $\tilde{q}_{\infty}=\sqrt{v^2_{1\infty}+v^2_{3\infty}\cos^2\sigma}$ and $\alpha_{n}=\arctan({\tan\alpha}/{\cos\sigma})$, where $\alpha_{n}$ is the angle between $\tilde{\bm{v}}_{\infty}$ and the $x_{2}Ox_{3}$-plane. Let $\bm{v}_{\sigma}=(0, v_{2\sigma}, v_{3\sigma})$ be the velocity of the uniform flow behind the shock, and $q_{j\sigma}$ the speed of the flow along $\bm{e}_j$. Since the velocity of the flow along $\bm{e}_i$ is unchanged across the flat shock $S_{ob}$, we have
\begin{equation*}
	\bm{v}_{\sigma}=v_{3\infty}\sin\sigma\bm{e}_i+q_{j\sigma}\bm{e}_j,
\end{equation*}
which implies
\begin{equation}\label{eq1:1.3}
	v_{2\sigma}=v_{3\infty}\sin\sigma\cos\sigma-q_{j\sigma}\sin\sigma,\quad v_{3\sigma}=v_{3\infty}\sin^2\sigma+q_{j\sigma}\cos\sigma.
\end{equation}
Notice that $q_{j\sigma}$ can be derived by \eqref{eq:A.3} with the choice $u_{0}=\tilde{q}_{\infty}$, $c_{0}=c_\infty$ and $\alpha=\alpha_{n}$. From \eqref{eq1:1.3}, we obtain the explicit expression of $\bm{v}_{\sigma}$. So the solution of equation $\eqref{eq:2.30}$ outside the Mach cone is $\Phi_{\sigma}=v_{2\sigma}x_2+v_{3\sigma}x_3$. In addition, we get the corresponding sound speed $c_{\sigma}=\sqrt{|\nabla_{\bm{x}}\Phi_{\sigma}|^{2}-B_{\infty}}$, where $B_{\infty}$ is given by \eqref{eq1:1}. Analogously, we can obtain a uniform state $(c'_{\sigma},(0, v'_{2\sigma}, v'_{3\sigma}))$ and a potential function $\Phi'_{\sigma}=v_{2\sigma}'x_2+v_{3\sigma}'x_3$ for the case of rarefaction waves.

Now we turn to the role of the angles $\alpha$ and $\sigma$. From \Cref{sec:appendix a}, we see that for the Chaplygin gas, there may be a phenomenon of concentration or cavitation, if the angle of the wedge changes excessively. To avoid this, we restrict the ranges of $\alpha$ and $\sigma$. Let $\beta_{n}$ be the angle between the flat shock $S_{ob}$ and the $x_{2}Ox_{3}$-plane, and $\beta'_{n}$ the angle between the flat rarefaction wave $S'_{ob}$ and the $x_{2}Ox_{3}$-plane. Then from the shock polar given in \Cref{sec:appendix a}, we know that the circle with center  $O_{\infty}(\tilde{q}_{\infty}\cos\alpha_{n},\tilde{q}_{\infty}\sin\alpha_{n})$ and radius $c_\infty$ is tangent to $S_{ob}$ at point $P$, and also tangent to $S'_{ob}$ at point $P'$ (see \Cref{fg7}). Note that, in the $x_1Ox_j$-plane, both $\tilde{q}_{\infty}$ and $c_\infty$ are invariant under any rotation transformation. We here apply the conclusion in \Cref{sec:appendix a} directly.

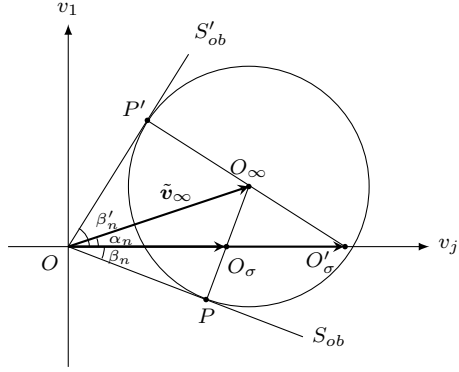
\begin{figure}[H]
	\centering
	\begin{tikzpicture}[smooth, scale=0.8]
	\draw [-latex](2,6)--(9,6)node[right]{\footnotesize$v_j$};
	\draw [-latex](3,4)--(3,9.7)node[above]{\footnotesize$v_1$};
	\draw (6,7) circle [radius=2];
	\draw [thick,-stealth](3,6)--(6,7)node[above]{\footnotesize$O_{\infty}$};
	\node at(4.8,6.6) [above]{\footnotesize$\tilde{\bm{v}}_{\infty}$};
	\draw (6,7)--(5.3,5.12)node[below]{\footnotesize$P$};
	\draw [thick,-stealth] (3,6)--(5.6,6);
	\node at (5.9,5.95)[below]{\footnotesize$O_{\sigma}$};	
	\draw [thick,-stealth] (3,6)--(7.58,6);
	\node at(7.2,5.75){\footnotesize$O_{\sigma}'$};
	\draw (3,6)--(5.6,5)--(6.9,4.5)node[right]{\footnotesize$S_{ob}$};
	\node at (3,6)[below left]{\footnotesize$O$};
	\draw (3.5,6) arc (0:20:0.4);
	\draw (3.6,6) arc (0:-23:0.5);
	\node at (3.52,6.1)[right]{\tiny$\alpha_{n}$};
	\node at (3.5,5.82)[right]{\tiny$\beta_{n}$};
	\draw (3,6)--(5,9.2);
	\node at (5.8,9.9)[below left]{\footnotesize$S_{ob}'$};
	\node at (4.1,8.3){\footnotesize$P'$};
	\node at (3.3,6.45)[right]{\tiny$\beta_{n}'$};
	\draw (4.31,8.1) --(7.6,6);
	\draw (3.35,6) arc (0:60:0.35);
	\fill (5.63,6) circle (1.3pt);
	\fill (6,7) circle(1.3pt);
	\fill (7.6,6) circle(1.3pt);
	\fill (4.315,8.1) circle(1.3pt);
	\fill (5.29,5.12) circle(1.3pt);
	\end{tikzpicture}
	\caption{Determination of flat pressure waves.}
	\label{fg7}
\end{figure}

To avoid concentration, we deduce from \eqref{eq:A.4} that
\begin{equation}\label{eq:2.2}
	c_{\infty}<\tilde{q}_{\infty}<\frac{c_{\infty}}{\sin\alpha_{n}}.
\end{equation}
Owing to the relation $\tan\alpha={v_{1\infty}}/{v_{3\infty}}$, the speed $\tilde{q}_{\infty}$ equals
\begin{equation}\label{eq:2.4}
	\tilde{q}_{\infty}=v_{1\infty}\sqrt{1+\frac{\cos^{2}\sigma}{\tan^{2}\alpha}}.
\end{equation}
Substituting \eqref{eq:2.4} into the right-hand side of inequality \eqref{eq:2.2}, we get
\begin{equation*}\label{eq:2.5}
	q_{\infty}\sin\alpha=v_{1\infty}<c_{\infty},
\end{equation*}
which leads to
\begin{equation}\label{eq:2.6}
	\alpha_{0}:=\arcsin\Big(\frac{c_{\infty}}{q_{\infty}}\Big)>\alpha.
\end{equation}
In addition, the left-hand side of inequality \eqref{eq:2.2} can be reduced to
\begin{equation}\label{eq:2.7}
	\sigma_0:=\arcsin\Big(\frac{\sqrt{q^{2}_{\infty}-c^{2}_{\infty}}}{v_{3\infty}}\Big)>\sigma.
\end{equation}

Then, to avoid cavitation, it follows from \eqref{eq:A.5} that
\begin{equation}\label{eq:2.8}
	\tilde{q}_{\infty}>\frac{c_{\infty}}{\cos\alpha_{n}}.
\end{equation}
Since the speed $\tilde{q}_{\infty}$ also takes the form
\begin{equation}\label{eq:2.9}
	\tilde{q}_{\infty}=v_{3\infty}\cos\sigma\sqrt{1+\frac{\tan^{2}\alpha}{\cos^{2}\sigma}},
\end{equation}
combining \eqref{eq:2.8} and \eqref{eq:2.9}, we have
\begin{equation}\label{eq:2.10}
	v_{3\infty}>\frac{c_{\infty}}{\cos\sigma}.
\end{equation}
Then inserting $\sigma=0$ into \eqref{eq:2.10} gives
\begin{equation}\label{eq:2.11}
	\alpha_{0}':=\arccos\Big(\frac{c_{\infty}}{q_{\infty}}\Big)>\alpha.
\end{equation}
Moreover, we infer from \eqref{eq:2.10} that
\begin{equation}\label{eq1:2.1}
	\sigma_{0}':=\arccos\Big(\frac{c_{\infty}}{v_{3\infty}}\Big)>\sigma.
\end{equation}
Obviously, $\sigma_{0}'<\sigma_{0}$ by \eqref{eq:2.7} and \eqref{eq1:2.1}.
\begin{remark}\label{remark:1.1}
	For the case of shocks, if we fix $\alpha\in (0,\alpha_0)$ and let $\sigma$ vary in $(0,\sigma_0)$, then from
	\begin{equation*}
		\tilde{q}_{\infty}\sin\alpha_{n}={q}_{\infty}\sin\alpha,\quad\tilde{q}_{\infty}\cos\alpha_{n}={q}_{\infty}\cos\alpha\cos\sigma,
	\end{equation*}
	we see that the $v_1$-coordinate of $O_\infty$ remains unchanged. Note that the radius of the circle in \Cref{fg7} is always $c_\infty$. This implies that the angle $\beta_{n}$ is a monotonically increasing function of $\sigma$. Since the phenomenon of concentration occurs only when $\beta_{n}=0$, it would never occur unless it happened for $\sigma=0$.
	
	However, for the case of rarefaction waves, the phenomenon of cavitation could really occur as long as we fix $\alpha\in (0,\alpha'_0)$ and let $\sigma$ approaches $\sigma'_0$. This can be observed from \Cref{fg7} that, since the center $O_\infty$ moves left as $\sigma$ increases, the circle finally touches the $v_1$-axis when $\sigma=\sigma'_0$.
\end{remark}

\subsection{Pressure wave patterns in conical coordinates}\label{sec:2.2}
As mentioned in \Cref{sec:1}, we can treat the case of shocks and the case of rarefaction waves separately. In what follows, we fix $\alpha\in (0,\alpha_0)$ for the case of shocks, and $\alpha\in (0,\alpha'_0)$ for the case of rarefaction waves, where $\alpha_0$ and $\alpha'_0$ are given by \eqref{eq:2.6} and \eqref{eq:2.11}, respectively. We will show that the attached shock appears for $\sigma\in(0,\sigma_0]$, while the attached rarefaction wave appears only for $\sigma\in(0,\sigma'_0)$. Moreover, the patterns of these waves will be demonstrated explicitly in a rectangular system of conical coordinates, as defined below.

Notice that the boundary value problem \eqref{eq:1.3}--\eqref{eq:1.4} and \eqref{eq1:1.1}--\eqref{eq1:1.2} is invariant under the scaling
\begin{equation*}
	\bm{x}\longrightarrow \varsigma\bm{x}, \quad 	(\rho,\Phi)\longrightarrow\Big(\rho,\dfrac{\Phi}{\varsigma}\Big)\quad\quad \text{for} \quad\varsigma\neq 0.
\end{equation*}
Thus, we seek a solution with the following form:
\begin{equation}\label{eq:2.32}
	\rho(\bm{x})=\rho(\xi_{1},\xi_{2}), \quad \Phi(\bm{x})=x_3\phi(\xi_{1},\xi_{2}),
\end{equation}
where $(\xi_{1},\xi_{2}):=({x_1}/{x_3},{x_2}/{x_3})$ are called conical coordinates. Hereafter, our discussion is carried out in this coordinates, along with the notations
\begin{equation}\label{eq1:2.12}
	\psi:=\frac{\phi}{\sqrt{B_{\infty}}},\quad a:=\frac{c}{\sqrt{B_{\infty}}},
\end{equation}
where the positive constant $B_\infty$ is defined by \eqref{eq1:1}.

Before proceeding, we introduce some notations. Let $\mathcal{C}_{\infty}$, $\mathcal{C}_{\sigma}$ and $\mathcal{C}'_{\sigma}$ be the Mach cones of the apex of the wing, determined by the oncoming flow, the flow behind the shock, and the flow behind the rarefaction wave, respectively. By abuse of notation but without misunderstanding, we continue to write $\mathcal{C}_{\infty}$, $\mathcal{C}_{\sigma}$, and $\mathcal{C}'_{\sigma}$ for the corresponding curves of the Mach cones, $S_{ob}$ and $S_{ob}'$ for the corresponding oblique shock and oblique rarefaction wave, respectively, in the conical coordinates.

Let us first derive the equations for $S_{ob}$ and $S_{ob}'$. By the continuity of $\Phi$ on the flat pressure waves, together with
\eqref{eq:2.32}, we have $\psi_\infty=\psi_\sigma$ on $S_{ob}$ and $\psi_\infty=\psi_\sigma'$ on $S_{ob}'$.
Moreover, using \eqref{eq:2.32}--\eqref{eq1:2.12} and the explicit expressions of $\Phi_\infty$, $\Phi_\sigma$ and $\Phi_\sigma'$, we get
\begin{equation}\label{eq1:1.5}
	\psi_\infty=\frac{v_{1\infty}\xi_1+v_{3\infty}}{\sqrt{B_{\infty}}},\quad
	\psi_\sigma=\frac{v_{2\sigma}\xi_2+v_{3\sigma}}{\sqrt{B_{\infty}}},\quad
	\psi'_\sigma=\frac{v'_{2\sigma}\xi_2+v'_{3\sigma}}{\sqrt{B_{\infty}}}.
\end{equation}
Then from \eqref{eq1:1.5}, it follows that the equations for $S_{ob}$ and $S_{ob}'$ are given by
\begin{align}
	&S_{ob}:\quad v_{1\infty}\xi_1+v_{3\infty}=v_{2\sigma}\xi_2+v_{3\sigma},\label{eq1:1.10}\\
	&S_{ob}':\quad v_{1\infty}\xi_1+v_{3\infty}=v'_{2\sigma}\xi_2+v'_{3\sigma}.\label{eq1:1.11}
\end{align}

We then turn to the equations for $\mathcal{C}_{\infty}$, $\mathcal{C}_{\sigma}$, and $\mathcal{C}'_{\sigma}$. It follows from \eqref{eq:B.6} and \eqref{eq:2.32}--\eqref{eq1:2.12} that, in the conical coordinates, the equation for a Mach cone of the apex of the wing takes the form
\begin{equation}\label{eq:2.33}
	|D\psi|^{2}+|\psi-D\psi\cdot\bm{\xi}|^{2}-\frac{\psi^{2}}{1+|\bm{\xi}|^{2}}=a^{2},
\end{equation}
where $\bm{\xi}:=(\xi_1,\xi_2)$. By \eqref{eq:2.29} and \eqref{eq:2.32}--\eqref{eq1:2.12}, and noting $\rho c=\sqrt{A}$, we have
\begin{equation}\label{eq:2.34}
	a^2=|D\psi|^{2}+|\psi-D\psi\cdot\bm{\xi}|^{2}-1.
\end{equation}
Substituting \eqref{eq:2.34} into \eqref{eq:2.33} yields
\begin{equation}\label{eq:2.35}
	\psi^2 = 1+|\bm{\xi}|^2.
\end{equation}
Plugging \eqref{eq1:1.5} into \eqref{eq:2.35}, we obtain the following equations:
\begin{alignat}{2}
	\mathcal{C}_{\infty}&:\quad (v_{1\infty}\xi_1+v_{3\infty})^2&&=B_{\infty}(1+|\bm{\xi}|^2),\label{eq1:1.7}\\
	\mathcal{C}_{\sigma}&:\qquad
	(v_{2\sigma}\xi_2+v_{3\sigma})^2&&=B_{\infty}(1+|\bm{\xi}|^2),\label{eq1:1.8}\\
	\mathcal{C}'_{\sigma}&:\qquad
	(v'_{2\sigma}\xi_2+v'_{3\sigma})^2&&=B_{\infty}(1+|\bm{\xi}|^2).\label{eq1:1.9}
\end{alignat}

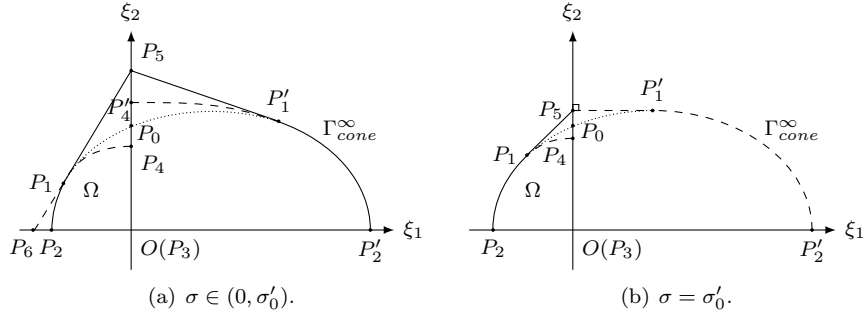
\begin{figure}[H]
	\centering
	\hspace{0.45cm}
	\subfigure[$\sigma\in(0,\sigma_{0}')$.]{
		\begin{tikzpicture}[smooth, scale=0.53]
		\draw [-latex](1.2,2)--(10.6,2)node[right]{\footnotesize$\xi_{1}$};
		\draw [-latex](4,1)--(4,7)node[above]{\footnotesize$\xi_{2}$};
		\node [below right] at(4,2){\footnotesize $O(P_{3})$};
		\draw (10,2) arc (0:65:4 and 3);
		\draw [densely dotted] (7.7,4.715) arc(65:160:3.98 and 2.8);
		\draw (2.3,3.15) arc(160:180:4.8 and 3.4);
		\node at (2,2)[below]{\footnotesize $P_{2}$};
		\node at (10,2)[below]{\footnotesize $P'_{2}$};
		\draw (4,6)node[above right]{\footnotesize $P_{5}$} -- (7.7,4.73)node[above]{\footnotesize $P'_{1}$};
		\draw [dashed] (4,5.2) to [out=0, in=160] (7.7,4.715);
		\node at(4.25,5.03)[left]{\footnotesize $P'_{4}$};
		\draw (4,6) -- (2.3,3.17)node[left]{\footnotesize $P_{1}$};
		\draw [dashed] (2.3,3.17)--(1.575,2);
		\node at (1.26,1.55){\footnotesize $P_{6}$};
		\draw [dashed] (4,4.1)node[below right]{\footnotesize $P_{4}$} to [out=180, in=65] (2.3,3.17);
		\node at(3,3){\footnotesize$\Omega$};
		\node at(9.5,4.5){\footnotesize$\Gamma_{cone}^{\infty}$};
		\node at(3.9,4.4)[right]{\footnotesize $P_{0}$};
		\fill (2.3,3.17)circle(1.3pt);
		\fill (4,4.62)circle(1.3pt);
		\fill (4,4.1)circle(1.3pt);
		\fill (4,5.2)circle(1.3pt);
		\fill (4,6)circle(1.3pt);
		\fill (7.7,4.73)circle(1.3pt);
		\fill (2,2)circle(1.3pt);
		\fill (10,2)circle(1.3pt);
		\fill (1.54,2)circle(1.3pt);
		\end{tikzpicture}
		\label{fg11}
	}
	\subfigure[$\sigma=\sigma_{0}'$.]{
		\begin{tikzpicture}[smooth, scale=0.53]
		\draw [-latex](1.4,2)--(10.6,2)node[right]{\footnotesize$\xi_{1}$};
		\draw [-latex](4,1)--(4,7)node[above]{\footnotesize$\xi_{2}$};
		\node [below right] at(4,2){\footnotesize $O(P_{3})$};
		\draw [dashed] (10,2) arc (0:90:4 and 3);
		\draw [densely dotted] (6.05,5) arc(90:140:3.98 and 2.8);
		\draw (2.93,3.94) arc(140:180:4 and 3);
		\node at (2,2)[below]{\footnotesize $P_{2}$};
		\node at (10,2)[below]{\footnotesize $P'_{2}$};
		\draw [dashed] (4,5)node[left]{\footnotesize $P_{5}$} -- (6,5)node[above]{\footnotesize $P'_{1}$};
		\draw (4,5) rectangle (4.15,5.15);
		\draw (4,5)--(2.85,3.88)node[left]{\footnotesize $P_{1}$};
		\draw [dashed] (4,4.3) to [out=180, in=40] (2.85,3.88);
		\node[below left] at(4.1,4.3){\footnotesize $P_{4}$};
		\node at(3,3){\footnotesize$\Omega$};
		\node at(9.5,4.5){\footnotesize$\Gamma_{cone}^{\infty}$};
		\node at(3.95,4.45)[right]{\footnotesize $P_{0}$};
		\fill (2.85,3.88)circle(1.3pt);
		\fill (4,4.62)circle(1.3pt);
		\fill (4,4.3)circle(1.3pt);
		\fill (4,5)circle(1.3pt);
		\fill (6,5)circle(1.3pt);
		\fill (2,2)circle(1.3pt);
		\fill (10,2)circle(1.3pt);
		\end{tikzpicture}
		\label{fg11'}
	}	
	\caption{Patterns of pressure waves in the $(\xi_1,\xi_2)$-plane.}
\end{figure}

Now we are ready to analyze the global structures of pressure waves. Since any shock is a characteristic, the oblique shock $S_{ob}$ must be tangent to the curve $\mathcal{C}_{\infty}$ at a point, denoted by $P_1$. This is also true for the case of rarefaction waves, except for the tangent point, denoted by $P_1'$. Also, we denote by $P_0$ the intersection point of $\mathcal{C}_{\infty}$ and the $\xi_2$-axis, by $P_2$ the intersection point of $\mathcal{C}_{\infty}$ and the negative $\xi_1$-axis, by $P_4$ (resp., $P_4'$) the intersection point of $\mathcal{C}_{\sigma}$ (resp., $\mathcal{C}_{\sigma}'$) and the $\xi_2$-axis, and by $P_5$ the intersection point of the oblique shocks and the $\xi_2$-axis (see \Cref{fg11}). In addition, by using \eqref{eq1:1.10} and \eqref{eq1:1.7}, we have $P_5(0,\cot\sigma)$ and $P_0(0,\sqrt{c^2_\infty-v^2_{1\infty}}/\sqrt{B_\infty})$. It follows from \eqref{eq:2.7} that when $\sigma\in(0,\sigma_{0})$, the point $P_5$ is above $P_0$ all the time.
Then, we establish the relationship between the curve $\mathcal{C}_{\infty}$ and the curve $\mathcal{C}_{\sigma}$ (resp., $\mathcal{C}_{\sigma}'$) as follows.

\begin{lemma}\label{lemma2}
	Let $\mathcal{C}_{\infty}$, $\mathcal{C}_{\sigma}$ and $\mathcal{C}'_{\sigma}$ be defined as \eqref{eq1:1.7}--\eqref{eq1:1.9}. For any fixed $\alpha\in (0,\alpha_0)$, if $\sigma\in(0,\sigma_{0})$, then the curve $\mathcal{C}_{\sigma}$ is tangent to $\mathcal{C}_{\infty}$ only at the point $P_1$, and the point $P_4$ is always below $P_0$, where $\alpha_0$ and $\sigma_{0}$ are given by \eqref{eq:2.6} and \eqref{eq:2.7}, respectively.
	
	With $\alpha_0$, $\sigma_{0}$, $P_1$ and $P_4$ replaced, respectively, by $\alpha_0'$, $\sigma_{0}'$, $P'_1$ and $P_4'$, we obtain the same result for $\mathcal{C}_{\infty}$ and $\mathcal{C}'_{\sigma}$ except that the point $P_4'$ is located between $P_0$ and $P_5$, where $\alpha'_0$ and $\sigma'_{0}$ are given by \eqref{eq:2.11} and \eqref{eq1:2.1}, respectively.
\end{lemma}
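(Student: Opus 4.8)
My plan is to regard \eqref{eq1:1.7}--\eqref{eq1:1.9} as quadratic curves and to exploit the factorization obtained by subtracting their equations. On any common point of $\mathcal{C}_{\infty}$ and $\mathcal{C}_{\sigma}$ both right-hand sides equal $B_{\infty}(1+|\bm{\xi}|^{2})$, so subtracting \eqref{eq1:1.7} from \eqref{eq1:1.8} gives $(v_{1\infty}\xi_{1}+v_{3\infty})^{2}=(v_{2\sigma}\xi_{2}+v_{3\sigma})^{2}$, whence every intersection lies on one of the lines $L_{\pm}:\ v_{1\infty}\xi_{1}+v_{3\infty}=\pm(v_{2\sigma}\xi_{2}+v_{3\sigma})$, with $L_{+}=S_{ob}$ by \eqref{eq1:1.10}. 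On the physically relevant branches one has $\psi_{\infty},\psi_{\sigma}>0$ throughout $\{\xi_{2}\ge 0\}$ (since $v_{3\infty},v_{3\sigma}>0$ and $v_{2\sigma}>0$ for shocks), whereas $L_{-}$ forces $\psi_{\infty}=-\psi_{\sigma}$; hence $L_{-}$ carries no common point in the closed region of interest, and all intersections there lie on $S_{ob}$.

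To see that the two curves are tangent at $P_{1}$, I would write $F_{\bullet}=\psi_{\bullet}^{2}-(1+|\bm{\xi}|^{2})$ for $\bullet\in\{\infty,\sigma\}$, so that $\mathcal{C}_{\infty}=\{F_{\infty}=0\}$ and $\mathcal{C}_{\sigma}=\{F_{\sigma}=0\}$, and recall $\psi_{\infty}=\psi_{\sigma}$ at $P_{1}$ because $P_{1}\in S_{ob}$. A short computation of $\nabla F_{\infty}(P_{1})$ and $\nabla F_{\sigma}(P_{1})$, using $\nabla\psi_{\infty}=(v_{1\infty}/\sqrt{B_{\infty}},0)$ and $\nabla\psi_{\sigma}=(0,v_{2\sigma}/\sqrt{B_{\infty}})$, shows that the pairing of $\nabla F_{\infty}(P_{1})$ with the direction $(v_{2\sigma},v_{1\infty})$ of $S_{ob}$ is the \emph{identical} scalar as the pairing of $\nabla F_{\sigma}(P_{1})$ with that direction. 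Since $S_{ob}$ is tangent to $\mathcal{C}_{\infty}$ at $P_{1}$ (established before the lemma), it is therefore tangent to $\mathcal{C}_{\sigma}$ at $P_{1}$ as well, so both conics share the tangent line $S_{ob}$ and are mutually tangent there; combined with the previous paragraph, $P_{1}$ is the only common point in the region, which is the first assertion.

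For the position of $P_{4}$, I would restrict to $\xi_{1}=0$ and set $h(\xi_{2})=(v_{2\sigma}\xi_{2}+v_{3\sigma})^{2}-B_{\infty}(1+\xi_{2}^{2})$. The ordinate $t_{0}$ of $P_{0}$ satisfies $B_{\infty}(1+t_{0}^{2})=v_{3\infty}^{2}$, so using \eqref{eq1:1.3} one obtains the factorization
\[
h(t_{0})=(v_{2\sigma}t_{0}+v_{3\sigma})^{2}-v_{3\infty}^{2},\qquad v_{2\sigma}t_{0}+v_{3\sigma}-v_{3\infty}=(v_{3\infty}\cos\sigma-q_{j\sigma})(t_{0}\sin\sigma-\cos\sigma).
\]
Across the shock the flow decelerates in the $\bm{e}_{j}$-direction, giving $v_{3\infty}\cos\sigma-q_{j\sigma}>0$, while the already recorded fact that $P_{5}(0,\cot\sigma)$ lies above $P_{0}$ reads $t_{0}<\cot\sigma$, i.e. $t_{0}\sin\sigma-\cos\sigma<0$; hence $h(t_{0})<0$. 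Since the apex lies inside $\mathcal{C}_{\sigma}$ one has $h(0)=v_{3\sigma}^{2}-B_{\infty}>0$, and because a line meets a conic in at most two points, this sign pattern forces a unique root of $h$ in $(0,t_{0})$, which is exactly the crossing $P_{4}$ bounding $\Omega$; thus $P_{4}$ lies below $P_{0}$.

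For rarefaction waves the same scheme applies with primes, the only change coming from acceleration across the wave, namely $v'_{2\sigma}<0$ and $v_{3\infty}\cos\sigma-q'_{j\sigma}<0$. The factorization then yields $h'(t_{0})>0$, so $P_{0}$ lies inside $\mathcal{C}'_{\sigma}$, while evaluating at $P_{5}$ and using $v'_{2\sigma}\cot\sigma+v'_{3\sigma}=v_{3\infty}$ (from \eqref{eq1:1.11}) gives $h'(\cot\sigma)=v_{3\infty}^{2}-B_{\infty}/\sin^{2}\sigma<0$, precisely because $\sigma<\sigma'_{0}<\sigma_{0}$ with $\sin\sigma_{0}=\sqrt{B_{\infty}}/v_{3\infty}$ by \eqref{eq:2.7}; the same two-point principle then places the relevant root $t'_{4}$ in $(t_{0},\cot\sigma)$, i.e. $P'_{4}$ between $P_{0}$ and $P_{5}$. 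The step I expect to be the main obstacle is not the tangency (algebraic once the gradient identity is noticed) but supplying the sign inputs for the ordering: one must import from the shock-polar analysis of \Cref{sec:appendix a} the correct monotonicity of the downstream state — the sign of $v_{3\infty}\cos\sigma-q_{j\sigma}$ and of $v_{2\sigma}$ for shocks versus rarefactions, and the apex-inside-cone inequalities $h(0)>0$ and $h'(0)>0$ — and then confirm that the root singled out by the intermediate-value argument is genuinely the crossing bounding the domain, uniformly as $\sigma$ ranges over $(0,\sigma_{0})$ and $(0,\sigma'_{0})$ respectively.
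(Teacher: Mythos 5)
Your first half follows the paper's route (reduce the intersection locus to $S_{ob}$ via positivity of $\psi_\infty,\psi_\sigma$, then invoke tangency of $S_{ob}$ to $\mathcal{C}_\infty$), but the positivity step has a real gap: the parenthetical ``since $v_{3\infty},v_{3\sigma}>0$ and $v_{2\sigma}>0$'' justifies $\psi_\sigma>0$ on $\{\xi_2\ge 0\}$, yet says nothing about $\psi_\infty=(v_{1\infty}\xi_1+v_{3\infty})/\sqrt{B_\infty}$ on the portion of $\mathcal{C}_\infty$ with $\xi_1<0$, where the sign is not automatic (it fails for $\xi_1<-v_{3\infty}/v_{1\infty}$). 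The paper closes exactly this point by introducing $P_6$ (where the extension of $S_{ob}$ meets the $\xi_1$-axis), noting $P_6$ lies to the left of $P_2$ by tangency, and bounding $\psi_\infty\ge v_{3\sigma}/\sqrt{B_\infty}>0$ on the triangle $\Delta P_3P_5P_6$ containing $\mathcal{C}_\infty\cap\{\xi_2\ge0,\,\xi_1<0\}$. You need some version of this argument; without it the exclusion of the branch $L_-$ is unjustified.

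Your second half is a genuinely different route from the paper's. The paper locates $P_4$ (resp.\ $P_4'$) by a soft continuity argument: the position is known at $\sigma=0$, the coordinate of $P_4$ is continuous in $\sigma$, and since the only intersection of the two Mach curves is $P_1$ (off the $\xi_2$-axis), $P_4$ can never cross $P_0$. You instead compute the sign of $h(t_0)$ via the (correct) factorization $v_{2\sigma}t_0+v_{3\sigma}-v_{3\infty}=(v_{3\infty}\cos\sigma-q_{j\sigma})(t_0\sin\sigma-\cos\sigma)$, which is more explicit and also yields the rarefaction case cleanly from $h'(\cot\sigma)=v_{3\infty}^2-B_\infty/\sin^2\sigma<0$. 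However, as you yourself flag, this route consumes inputs you do not supply: the deceleration/acceleration signs $v_{3\infty}\cos\sigma-q_{j\sigma}\gtrless 0$ (these do follow from \eqref{eq:A.3} with $\alpha>0$ resp.\ $\alpha'<0$, so are harmless), the identification of the intermediate-value root with the crossing that bounds $\Omega$, and most substantively $h(0)=v_{3\sigma}^2-B_\infty>0$, which by Bernoulli is equivalent to $|v_{2\sigma}|<c_\sigma$ and is nowhere established in the paper; leaving it as an ``imported'' fact is a genuine hole. The paper's continuity argument avoids all three issues, which is precisely what it buys over your more quantitative computation.
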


\begin{proof}
	Let us first prove that $\mathcal{C}_{\infty}$ and $\mathcal{C}_{\sigma}$ are tangent at the point $P_1$. It follows from \eqref{eq1:1.7} and \eqref{eq1:1.8} that the intersection points of $\mathcal{C}_{\infty}$ and $\mathcal{C}_{\sigma}$ satisfy
	\begin{equation}\label{eq:2.3}
		|v_{1\infty}\xi_1+v_{3\infty}|=|v_{2\sigma}\xi_2+v_{3\sigma}|.
	\end{equation}
	We claim that the function $\psi_\infty$ is positive on the curve $\mathcal{C}_{\infty}$. The explicit expression of $\psi_\infty$ implies $\psi_{\infty}>0$ on the arc $\mathcal{C}_{\infty}\cap \{\xi_2\geq0,\xi_1\geq0\}$. Then it suffices to consider the function $\psi_\infty$ on the remaining part of $\mathcal{C}_{\infty}\cap \{\xi_2\geq0,\xi_1<0\}$. Denoted by $P_6$ the intersection point of the extension line of $S_{ob}$ and the $\xi_1$-axis. Since the oblique shock $S_{ob}$ is tangent to the curve $\mathcal{C}_{\infty}$, the point $P_6$ must lie in the left-hand side of $P_2$ (see \Cref{fg11}). From \eqref{eq1:1.5}--\eqref{eq1:1.10} and the explicit expression of $\psi_\infty$ and $S_{ob}$, it follows that in the triangle $\Delta P_3P_5P_6$, the following inequality holds
	\begin{equation*}
		\psi_{\infty}(\bm{\xi})\geq\frac{v_{1\infty}\xi_{P_6}+v_{3\infty}}{\sqrt{B_{\infty}}}=\frac{v_{3\sigma}}{\sqrt{B_{\infty}}}>0,
	\end{equation*}
	where $\xi_{P_6}$ denotes the $\xi_1$-coordinate of $P_6$. In addition, it is clear that $\mathcal{C}_{\infty}\cap \{\xi_2\geq0,\xi_1<0\}\subset \Delta P_3P_5P_6$. Thus we have shown the positivity of $\psi_\infty$. Moreover, in this conical coordinates, we know $\psi_\sigma>0$ on the curve $\mathcal{C}_{\sigma}$. Therefore, equality \eqref{eq:2.3} is equivalent to
	\begin{equation}\label{eq:2.49}
		v_{1\infty}\xi_1+v_{3\infty}=v_{2\sigma}\xi_2+v_{3\sigma}.
	\end{equation}
	In fact, \eqref{eq:2.49} is the equation for the oblique shock $S_{ob}$.
	Also, since the curve $\mathcal{C}_{\infty}$ is tangent to $S_{ob}$ at the point $P_1$, it follows that there is only one intersection point of $\mathcal{C}_{\infty}$ and $\mathcal{C}_{\sigma}$, and moreover they are tangent at the point $P_1$.
	
	Then we prove that for any fixed $\alpha\in (0,\alpha_0)$, the point $P_{4}$ is always below $P_{0}$ when $\sigma\in(0,\sigma_0)$. Noting that the point $P_5$ is above $P_0$ when $\sigma\in(0,\sigma_0)$,  then $P_1$ is the only intersection point of $\mathcal{C}_{\infty}$ and $\mathcal{C}_{\sigma}$ from the above discussion. We also know that  the point $P_{4}$ is below $P_{0}$ when $\sigma=0$. Then for $\sigma\in(0,\sigma_0)$,  the point $P_{4}$ is below $P_{0}$, as the coordinate of $P_4$ is a continuous function of $\sigma$.
	
	As for the point $P'_4$, since the curve $\mathcal{C}'_{\sigma}$ is tangent to the oblique rarefaction wave $S'_{ob}$, it is always below $P_5$. Also, noting that the point $P'_4$ is above $P_{0}$ for $\sigma=0$, we deduce that the point $P'_4$ must be located between $P_0$ and $P_5$. The proof is completed.
\end{proof}

\begin{remark}\label{remark:1.2}
	We explain here that the oblique rarefaction wave $S_{ob}'$ is perpendicular to the $\xi_2$-axis when $\sigma=\sigma_{0}'$.  Note that the location of $\mathcal{C}_{\infty}$ is known (see the curve $P_2P_0P_2'$ in \Cref{fg11'}). Then for any fixed $\alpha\in (0,\alpha_0)$, as $\sigma$ varies from zero to $\sigma_0$, there must exist a critical angle such that $S_{ob}'$ is perpendicular to the $\xi_2$-axis. Also, we know from \Cref{sec:appendix a} that if the angle between the flat rarefaction wave and the velocity of the uniform flow behind $S'_{ob}$ approaches to $\pi/2$, then the phenomenon of cavitation will occur. Therefore, we infer from \eqref{eq:2.10} and \eqref{eq1:2.1} that $\sigma=\sigma_{0}'$ is the critical angle.
\end{remark}

With the above analysis, we are able to draw the patterns of pressure waves as in \Cref{fg11}.  Note that the points $P_{4}$ and $P_{5}$ meet at $P_0$ when $\sigma=\sigma_{0}$. Consequently, the vertex angle of the wing $W_\sigma$ is greater than the apex angle of the Mach cone $\mathcal{C}_\sigma$ until $\sigma=\sigma_0$. This implies that the attached shock occurs only for $\sigma\in(0,\sigma_0]$. Furthermore, we conclude from \Cref{lemma2} and \Cref{remark:1.2} that there is a rarefaction wave attached to the leading edge for $\sigma\in(0,\sigma_{0}')$. Since the discussion of the case of rarefaction waves is similar to that of the case of shocks, we mainly consider the case of shocks afterwards.

\begin{figure}[H]
	\centering
	\subfigure[$\sigma\in(0,\sigma_{0})$.]{
		\begin{tikzpicture}[smooth, scale=0.7]
		\draw  [-latex](1,2)--(5.8,2) node [right] {\footnotesize$\xi_{1}$};
		\draw  [-latex](4,1)--(4,8) node [above] {\footnotesize$\xi_{2}$};
		\draw  (2,2)   to [out=90, in =245] (2.78,5.45) node [above left] {\footnotesize$P_{1}$} ;
		\draw [densely dotted] (4,7) node [right] {\footnotesize$P_{0}$}to [out=223, in =233] (2.82,5.42) ;
		\draw [dashed] (4, 6.1)node [right] {\footnotesize$P_{4}$} to [out=180, in =57] (2.82,5.42) ;
		\node  at (2,2) [below] {\footnotesize$P_{2}$};
		\node  at (4,2) [below right] {\footnotesize$O(P_{3})$};
		\node  at  (3,4) {\footnotesize$\Omega$};
		\node  at  (3,1.5) {\footnotesize$\Gamma_{sym}$};
		\node  at  (4.7,4) {\footnotesize$\Gamma_{wing}$};
		\node  at  (1.5,5) {\footnotesize$\Gamma_{cone}^{\infty}$};
		\draw (2.78,5.45)--(4,7.6) node  [right]  {\footnotesize$P_{5}$};
		\fill (2,2)circle(1.3pt);
		\fill (2.72,5.33)circle(1.3pt);
		\fill (4, 6.1)circle(1.3pt);
		\fill (4,7)circle(1.3pt);
		\fill (4,7.6)circle(1.3pt);
		\end{tikzpicture}
		\label{fg16}
	}\qquad
	\subfigure[$\sigma=\sigma_{0}$.]{
		\begin{tikzpicture}[smooth, scale=0.7]
		\draw  [-latex](7,2)--(11.8,2)node[right]{\footnotesize$\xi_{1}$};
		\draw  [-latex](10,1)--(10,8)node[above]{\footnotesize$\xi_{2}$};
		\draw  (8,2)  node [below ]{\footnotesize$P_{2}$} to [out=90, in =225] (10,7) node [right]{\footnotesize$P_{0}$} ;
		\node  at (10,2)[below right ]{\footnotesize$O(P_{3})$};
		\node  at (9,4){\footnotesize${U}$};
		\node  at (9,1.5){\footnotesize$\Gamma_{sym}$};
		\node  at (10.7,4){\footnotesize$\Gamma_{wing}$};
		\node  at (7.5,5){\footnotesize$\Gamma_{cone}^{\infty}$};
		\fill (10,7)circle(1.3pt);
		\fill (8,2)circle(1.3pt);
		\end{tikzpicture}
		\label{fg4}
	}
	\label{fg416}
	\caption{Domain for the case of shocks.}
\end{figure}
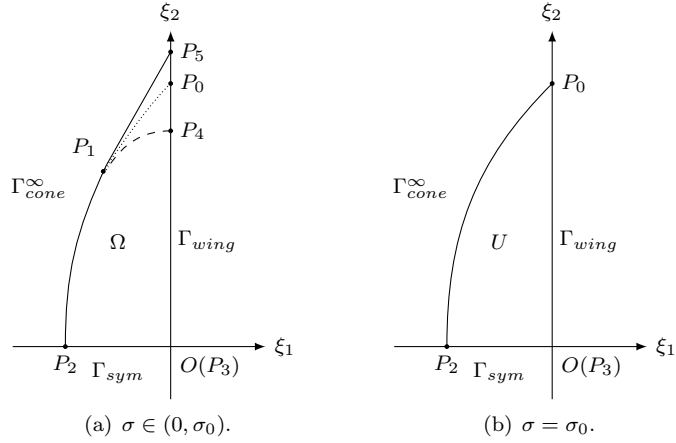

Let us denote by $\Gamma_{cone}^{\infty}$ and $\Gamma_{cone}^{\sigma}$ the arcs $P_{1}P_{2}$ and $P_{1}P_{4}$, respectively; denote by $\Gamma_{sym}$ and $\Gamma_{wing}$ the lines $P_{2}P_{3}$ and $P_{3}P_{5}$, respectively. In addition, let $U$ be the domain $P_{1}P_{2}P_{3}P_{5}$, and $\Omega$ the domain $P_{1}P_{2}P_{3}P_{4}$.

Finally, we conclude \Cref{sec:2.1,sec:2.2} by the following proposition.

\begin{proposition}\label{Pro1}
	Assume that the state $(\rho_{\infty},q_{\infty})$ of the oncoming flow is uniform and supersonic, and the wing $W_{\sigma}$ is a triangular plate given by \eqref{eq:1.2}. Then we can find a critical angle $\alpha_{0}=\alpha_{0}(\rho_{\infty},q_{\infty})\in(0,{\pi}/{2})$ so that for any fixed $\alpha\in(0,\alpha_{0})$, there exists $\sigma_{0}=\sigma_{0}(\rho_{\infty},q_{\infty},\alpha)\in(0,{\pi}/{2})$ such that
	\begin{enumerate}
		\item [i)] for $\sigma\in(0,\sigma_{0})$, there is a uniform flow in the domain $U\setminus\Omega$ $($see $\Cref{fg16})$, and the corresponding potential function satisfies
		\begin{equation*}
			\psi=\frac{v_{2\sigma}\xi_2+v_{3\sigma}}{\sqrt{B_{\infty}}}.
		\end{equation*}
		\item [ii)] for $\sigma=\sigma_{0}$, the boundary $\Gamma_{cone}^{\sigma}$ degenerates into the point $P_{0}$, and the domain $\Omega$ coincides with $U$ $($see $\Cref{fg4})$. In particular, there is no uniform flow behind the shock.
	\end{enumerate}
	Here $\alpha_0$ and $\sigma_{0}$ are defined by \eqref{eq:2.6} and \eqref{eq:2.7}, respectively.
\end{proposition}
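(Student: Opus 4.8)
The plan is to read off $\alpha_0$ and $\sigma_0$ as exactly the angles defined in \eqref{eq:2.6} and \eqref{eq:2.7}, and then assemble the analysis of \Cref{sec:2.1,sec:2.2} (in particular \Cref{lemma2}) into the two conclusions. First I would check well-definedness: since the flow is supersonic, $q_\infty>c_\infty$, so $\alpha_0=\arcsin(c_\infty/q_\infty)\in(0,\pi/2)$ depends only on $\rho_\infty$ and $q_\infty$ through $c_\infty=c(\rho_\infty)$. For fixed $\alpha\in(0,\alpha_0)$ one has $q_\infty\sin\alpha<c_\infty$, equivalently $\sqrt{B_\infty}<v_{3\infty}$ with $v_{3\infty}=q_\infty\cos\alpha$; hence $\sin\sigma_0=\sqrt{B_\infty}/v_{3\infty}\in(0,1)$ and $\sigma_0=\sigma_0(\rho_\infty,q_\infty,\alpha)\in(0,\pi/2)$, so the asserted parameter dependence is automatic.

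For part i) I would invoke the finite-propagation argument of \Cref{sec:2.1}: the downstream flow stays supersonic, so \eqref{eq:2.30} is hyperbolic and the portion of the field lying behind the oblique shock but outside the Mach cone $\mathcal{C}_\sigma$ is undisturbed, carrying the flat-shock state $\Phi_\sigma=v_{2\sigma}x_2+v_{3\sigma}x_3$. Passing to conical coordinates via \eqref{eq:2.32} and \eqref{eq1:2.12} yields $\psi=\psi_\sigma=(v_{2\sigma}\xi_2+v_{3\sigma})/\sqrt{B_\infty}$. It then remains to identify this undisturbed region with $U\setminus\Omega$: for $\sigma\in(0,\sigma_0)$, \Cref{lemma2} places $P_4$ strictly below $P_0$, while the remark following \eqref{eq:2.7} places $P_5$ strictly above $P_0$, so $P_4$ lies strictly below $P_5$ on the $\xi_2$-axis and $U\setminus\Omega$ is the nonempty curvilinear triangle $P_1P_4P_5$ bounded by the shock $S_{ob}$, the wing segment $P_4P_5$, and the arc $\Gamma^\sigma_{cone}=P_1P_4$ of $\mathcal{C}_\sigma$. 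This is precisely the region behind the shock and beyond $\mathcal{C}_\sigma$, giving $\psi=\psi_\sigma$ there.

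For part ii) the key computation is $P_5=P_0$ at $\sigma=\sigma_0$. Using $P_5(0,\cot\sigma)$ and $P_0\bigl(0,\sqrt{c_\infty^2-v_{1\infty}^2}/\sqrt{B_\infty}\bigr)$, and substituting $\sin\sigma_0=\sqrt{B_\infty}/v_{3\infty}$ together with $B_\infty=q_\infty^2-c_\infty^2$, $v_{1\infty}=q_\infty\sin\alpha$, $v_{3\infty}=q_\infty\cos\alpha$, a short calculation gives $v_{3\infty}^2\cos^2\sigma_0=v_{3\infty}^2-B_\infty=c_\infty^2-v_{1\infty}^2$, whence $\cos\sigma_0=\sqrt{c_\infty^2-v_{1\infty}^2}/v_{3\infty}$ and $\cot\sigma_0=\sqrt{c_\infty^2-v_{1\infty}^2}/\sqrt{B_\infty}$, matching the ordinate of $P_0$. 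Next, since $S_{ob}$ is tangent to $\mathcal{C}_\infty$ at $P_1$ and now passes through $P_0=P_5\in\mathcal{C}_\infty$, and $\mathcal{C}_\infty$ is a (convex) ellipse under \eqref{eq:2.10}, a tangent line meets it only at the point of tangency; hence $P_1=P_0$. Then $\mathcal{C}_\sigma$, tangent to $\mathcal{C}_\infty$ at $P_1=P_0$ and passing through the $\xi_2$-axis point $P_0$, forces $P_4=P_0$ as well (consistent with $P_4\le P_0$ from \Cref{lemma2} by continuity). Thus $\Gamma^\sigma_{cone}=P_1P_4$ collapses to $P_0$, $U\setminus\Omega=\varnothing$, and $\Omega=U$, so no uniform state survives behind the shock.

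The step I expect to demand the most care is the identification in part i) that the undisturbed uniform state occupies \emph{exactly} $U\setminus\Omega$ and no more, i.e.\ that the disturbance emanating from the symmetry plane and the wing edge fills precisely the cone interior $\Omega$. Everything hinges on the geometric ordering $P_4<P_0<P_5$ furnished by \Cref{lemma2} and the tangency of $S_{ob}$ to $\mathcal{C}_\infty$; once this ordering is in hand, the proposition reduces to patching the finite-propagation principle onto it, and the computation in part ii) is just the endpoint-angle evaluation above.
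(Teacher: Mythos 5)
Your proposal is correct and follows essentially the same route as the paper, which proves \Cref{Pro1} implicitly through the analysis of \Cref{sec:2.1,sec:2.2}: the shock-polar (concentration) conditions yield $\alpha_0$ and $\sigma_0$ as in \eqref{eq:2.6}--\eqref{eq:2.7}, finite propagation gives the uniform state $\psi_\sigma$ behind the flat shock outside $\mathcal{C}_\sigma$, and \Cref{lemma2} together with the coincidence of $P_4$, $P_5$ and $P_0$ at $\sigma=\sigma_0$ produces the geometry of $U\setminus\Omega$ and its degeneration. Your explicit verification that $\cot\sigma_0=\sqrt{c_\infty^2-v_{1\infty}^2}/\sqrt{B_\infty}$, and the tangency argument forcing $P_1=P_4=P_5=P_0$, merely fill in computations the paper asserts without detail.
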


\subsection{BVP for a nonlinear mixed-type equation}\label{sec:2.3}
In this subsection, we will reformulate \cref{prob1} in the conical coordinates. Before proceeding further, we introduce a useful notation
\begin{equation*}
	D^{2}f[\bm{a},\bm{b}]:= \sum^{2}_{i,j=1} a_{i}b_{j}\partial_{ij}f\quad\text{for $f\in C^2$ and $\bm{a},\bm{b}\in\mathbb{R}^{2}$}.
\end{equation*}

Let us first derive the potential equation for $\psi$. By \eqref{eq:1.3} and \eqref{eq:2.32}--\eqref{eq1:2.12}, we obtain
\begin{equation*}\label{eq:2.39}
	\mathrm{div}(\rho(D\psi-(\psi-D\psi\cdot\bm{\xi})\bm{\xi}))+2\rho(\psi-D\psi\cdot\bm{\xi})=0,
\end{equation*}
or equivalently,
\begin{equation}\label{eq:2.41}
	a^{2}(\Delta\psi+D^{2}\psi[\bm{\xi},\bm{\xi}])-D^{2}\psi[D\psi-\chi\bm{\xi},D\psi-\chi\bm{\xi}]=0,
\end{equation}
where $\Delta$ and $D$ denote the Laplacian and the gradient operator with respect to $\bm{\xi}$, respectively; $\chi$ is given by
\begin{equation}\label{eq:2.43}
	\chi=\psi-D\psi\cdot\bm{\xi}.
\end{equation}
Define
\begin{equation}\label{eq:2.40}
	L^{2}:=\frac{|D\psi|^{2}+|\psi-D\psi\cdot\bm{\xi}|^{2}-\frac{\psi^{2}}{1+|\bm{\xi}|^{2}}}{a^{2}}.
\end{equation}
The type of equation \eqref{eq:2.41} is determined by $L$. To be specific, equation \eqref{eq:2.41} is hyperbolic if $L>1$, elliptic if $L<1$, and parabolic degenerate if $L=1$. It follows from \eqref{eq:2.33} and \eqref{eq:2.40} that equation \eqref{eq:2.41} is hyperbolic in the domain $U\setminus\Omega$ and parabolic degenerate on the arc $\Gamma_{cone}^{\infty}\cup\Gamma_{cone}^{\sigma}$.

Then we simplify the form of the boundary conditions. Since we have shown in \Cref{lemma2} that both the function $\psi_\infty$ on $\mathcal{C}_{\infty}$ and the function $\psi_\sigma$ on $\mathcal{C}_{\sigma}$ are positive, it follows from \eqref{eq:2.34} and \eqref{eq:2.40} that $L=1$ is equivalent to
\begin{equation}\label{eq:3.1}
	\psi=\sqrt{1+|\bm{\xi}|^{2}}.
\end{equation}
In other words, \cref{prob1} can be rewritten as

\begin{problem}\label{prob2}
	Let $\alpha_{0}$, $\alpha$ and $\sigma_{0}$ be as in $\Cref{Pro1}$. Then, for any $\sigma\in(0,\sigma_{0}]$, we expect to seek a solution $\psi$ of the following boundary value problem:
	\begin{equation}\label{eq:2.42}
		\begin{cases}
			\text{Equation}~\eqref{eq:2.41}\quad&\text{in $\Omega$},\\
			\psi=\sqrt{1+|\bm{\xi}|^{2}}\quad&\text{on $\Gamma_{cone}^{\infty}\cup\Gamma_{cone}^{\sigma}$},\\
			D\psi\cdot\bm{\nu}_{w}=0\quad&\text{on $\Gamma_{wing}$},\\
			D\psi\cdot\bm{\nu}_{sy}=0\quad&\text{on $\Gamma_{sym}$},
		\end{cases}
	\end{equation}
	where $\bm{\nu}_{w}=(1,0)$ and $\bm{\nu}_{sy}=(0,-1)$ are the exterior normals to $\Gamma_{wing}$ and $\Gamma_{sym}$, respectively. We emphasize here that when $\sigma=\sigma_0$, the domain $\Omega$ coincides with $U$,  and the boundary $\Gamma_{cone}^{\sigma}$ degenerates into the point $P_0$.
\end{problem}

Correspondingly, to prove \Cref{thm: Main}, we only need to show that

\begin{theorem}\label{thm4}
	Let $\alpha_{0}$, $\alpha$ and $\sigma_{0}$ be as in $\Cref{Pro1}$. Then, for any $\sigma\in(0,\sigma_{0}]$, $\cref{prob2}$ admits a unique solution $\psi$ satisfying
	\begin{equation*}
		\psi\in C^{\infty}(\bar{\Omega}\setminus\overline{\Gamma_{cone}^{\infty}\cup\Gamma_{cone}^{\sigma}})\cap Lip(\bar{\Omega})
	\end{equation*}
	and
	\begin{equation*}
		\psi >\sqrt{1+|\bm{\xi}|^{2}} \quad\text{in}~ \bar{\Omega}\setminus\overline{\Gamma_{cone}^{\infty}\cup\Gamma_{cone}^{\sigma}}.
	\end{equation*} 	
\end{theorem}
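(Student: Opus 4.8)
The plan rests on the observation that the strict inequality asserted in the conclusion is \emph{identical} to the ellipticity of the governing equation. Combining \eqref{eq:2.34} with \eqref{eq:2.40} yields the algebraic identity
\begin{equation*}
	L^{2}=1+\frac{1-\psi^{2}/(1+|\bm{\xi}|^{2})}{a^{2}},
\end{equation*}
and since $\psi>0$ throughout $\bar\Omega$ (the positivity of $\psi_\infty$ and $\psi_\sigma$ was verified in the proof of \Cref{lemma2}), it follows that $L<1$—that is, ellipticity of \eqref{eq:2.41}—holds exactly where $\psi>\sqrt{1+|\bm{\xi}|^{2}}$. Hence proving the theorem amounts to producing a solution that stays strictly above the sonic surface $\psi=\sqrt{1+|\bm{\xi}|^{2}}$, and the whole argument is organized around propagating this single inequality. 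First I would record this reduction, then proceed in two stages: \emph{a priori} estimates for any solution obeying the weak invariant $\psi\geq\sqrt{1+|\bm{\xi}|^{2}}$, followed by a continuity argument in the sweep angle $\sigma$.

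For the estimates, fix a solution $\psi$ obeying $\psi\geq\sqrt{1+|\bm{\xi}|^{2}}$, under which \eqref{eq:2.41} is degenerate elliptic with coefficients depending smoothly on $\psi,D\psi,\bm{\xi}$. The ellipticity lower bound itself I would obtain by a maximum-principle argument: writing $w:=\psi-\sqrt{1+|\bm{\xi}|^{2}}$, which vanishes on $\Gamma_{cone}^{\infty}\cup\Gamma_{cone}^{\sigma}$ by the Dirichlet condition, one derives from \eqref{eq:2.41} a second-order inequality for $w$ whose structure, together with the oblique conditions on $\Gamma_{wing}$ and $\Gamma_{sym}$, forbids a nonpositive interior minimum; the strong maximum principle and Hopf's lemma then give $w>0$ in $\Omega$, i.e. strict ellipticity. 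It is here that only the weak invariant is needed, the argument tolerating interior degeneracy. The serious difficulty is the gradient bound at the sonic arc, where the equation degenerates to parabolic type precisely where the Dirichlet data lives. The two explicit uniform potentials $\psi_\infty,\psi_\sigma$ of \eqref{eq1:1.5} are the natural auxiliary functions: each is linear, hence an exact solution of \eqref{eq:2.41}, and each meets the trace $\sqrt{1+|\bm{\xi}|^{2}}$ exactly along its own cone $\mathcal{C}_\infty$, resp.\ $\mathcal{C}_\sigma$. Comparing $\psi$ with $\psi_\infty$ near $\Gamma_{cone}^{\infty}$ and with $\psi_\sigma$ near $\Gamma_{cone}^{\sigma}$, and squeezing $\psi$ between $\sqrt{1+|\bm{\xi}|^{2}}$ and a one-sided quadratic barrier tangent to it along the cone, yields a gradient bound that stays finite at the degeneracy; this is the crucial Lipschitz estimate of the abstract and gives $\psi\in Lip(\bar\Omega)$. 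Away from the sonic arc the equation is uniformly elliptic, so interior and boundary Schauder theory, handling the corners $P_1,P_2,P_3,P_5$ and the mixed Dirichlet/oblique data, bootstraps to $\psi\in C^{\infty}(\bar\Omega\setminus\overline{\Gamma_{cone}^{\infty}\cup\Gamma_{cone}^{\sigma}})$.

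With uniform estimates in hand I would run the method of continuity in $\sigma$, after flattening the $\sigma$-dependent domain $\Omega$ onto a fixed reference domain. Let $\mathcal{I}$ be the set of $\sigma\in(0,\sigma_0]$ for which \cref{prob2} admits a solution in the relevant Hölder class satisfying the estimates above with $\psi>\sqrt{1+|\bm{\xi}|^{2}}$. The set is nonempty for small $\sigma$, where the configuration is a small perturbation of the two-dimensional one solved by shock polars in \Cref{sec:appendix a}; it is closed in $(0,\sigma_0]$ because the uniform estimates and Arzel\`a--Ascoli extract a convergent subsequence whose limit solves \cref{prob2}, while the strong maximum principle keeps the limit strictly above the sonic surface rather than tangent to it; and it is open because linearizing \eqref{eq:2.41} about a solution produces a uniformly elliptic operator with the same mixed oblique boundary conditions, invertible by Fredholm theory and the maximum principle, so the implicit function theorem in weighted Hölder spaces adapted to the degeneracy perturbs the solution to nearby $\sigma$. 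Hence $\mathcal{I}=(0,\sigma_0]$; the degenerate endpoint $\sigma=\sigma_0$, where $\Gamma_{cone}^{\sigma}$ shrinks to $P_0$ and $\Omega$ fills out $U$ as in \Cref{Pro1}, is captured by passing to the limit $\sigma\to\sigma_0^{-}$ using the uniform bounds. Uniqueness follows from the comparison principle for the now genuinely elliptic equation.

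I expect the main obstacle to be exactly the first stage, for two intertwined reasons. First, the type of \eqref{eq:2.41} is unknown in advance and the bound $\psi\geq\sqrt{1+|\bm{\xi}|^{2}}$ guaranteeing ellipticity is itself what one is trying to prove, so the argument must avoid circularity; this is resolved by carrying the weak inequality as an invariant along the continuation and recovering strictness from the strong maximum principle. Second, and more seriously, the equation degenerates on the very arc carrying the Dirichlet data, so the crucial Lipschitz estimate at the sonic boundary cannot be read off from standard elliptic regularity and must be forced by hand through sharp barriers. Since the Chaplygin exponent $\gamma=-1$ lies outside the range covered by the Elling--Liu ellipticity principle, the construction of these barriers, tuned to the identity $\rho c=\sqrt{A}$, is the genuine crux of the proof.
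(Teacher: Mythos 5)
Your overall architecture (reduce the theorem to the ellipticity inequality $\psi>\sqrt{1+|\bm{\xi}|^{2}}$, prove $L^\infty$ and Lipschitz estimates under the weak invariant, run a continuity method, get uniqueness from a comparison principle) matches the paper's, but two of your key steps have genuine gaps, and they are precisely the places where the paper introduces its non-obvious devices.

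First, your continuity parameter is wrong, and it breaks the nonemptiness step. You continue in the sweep angle $\sigma$ and claim the set $\mathcal{I}$ is nonempty for small $\sigma$ because the configuration is then ``a small perturbation of the two-dimensional one solved by shock polars.'' The shock polar only determines the uniform state \emph{outside} the Mach cone; inside $\Omega$ the flow is genuinely conical and one must still solve the full degenerate elliptic problem \eqref{eq:2.41}, even for tiny $\sigma$ (and as $\sigma\to0^{+}$ the point $P_5(0,\cot\sigma)$ escapes to infinity, so the geometry does not simplify). There is no trivially solvable endpoint in $\sigma$. The paper instead continues in an artificial parameter $\mu\in[0,1]$ inserted into the equation itself, \eqref{eq:3.13}, so that $\mu=0$ gives the \emph{linear} elliptic equation \eqref{eq:3.43}, solvable by the Fredholm alternative; this is what makes the continuation start.

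Second, your mechanism for the strict inequality is circular at exactly the delicate point. You propose to get $\psi>\sqrt{1+|\bm{\xi}|^{2}}$ from the strong maximum principle and Hopf's lemma applied to $\psi-\sqrt{1+|\bm{\xi}|^{2}}$. But at an interior point where this difference vanishes, $L=1$ and the operator is parabolic degenerate, so the strong maximum principle is not available there --- the ellipticity you need is the conclusion you are trying to draw. The paper breaks this circle by lifting the Dirichlet data to $\sqrt{1+|\bm{\xi}|^{2}}+\varepsilon$, comparing with the explicit family of \emph{exact linear solutions} ${w}^{\bm{\eta}}$ of \eqref{eq:3.26} (whose supremum $w^{-}$ satisfies $w^{-}\geq1+\varepsilon+\delta$ on compact subsets with $\delta$ independent of $\mu,\varepsilon$, see \eqref{eq1:3.21}), and only then letting $\varepsilon\to0^{+}$. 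Related smaller gaps: your ``one-sided quadratic barrier tangent to the sonic surface'' is not shown to be a super- or sub-solution of the degenerate equation (the paper's barriers work because they are exact solutions, and the convexity of the reflected domain $\Omega_{ext}$ forces $w^{\pm}=1+\varepsilon$ on the whole boundary); you omit the interior gradient estimate entirely (the paper proves $|D\psi|^{2}$ attains its maximum on the boundary via \eqref{eq:3.39}--\eqref{eq:3.40}, without which a boundary barrier gives no global Lipschitz bound); and the comparison principle you invoke for uniqueness is not immediate for \eqref{eq:2.41} --- the paper needs the substitution $w=\cosh z$ of \eqref{eq:3.6} to put the equation into a form where the principal coefficients are independent of the unknown and the lower-order term is monotone, so that Theorem 10.1 of \cite{GT01} applies. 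Finally, note the paper sidesteps your corner/mixed-boundary analysis altogether by reflecting $\Omega$ evenly across the axes to a convex domain with pure Dirichlet data.
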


Before proving \Cref{thm4}, we briefly make some comments on equation \eqref{eq:2.41}. This equation is hyperbolic in the domain $U\setminus\Omega$ and degenerate on the boundary $\Gamma_{cone}^{\infty}\cup\Gamma_{cone}^{\sigma}$. Then, it is natural to find a solution such that equation \eqref{eq:2.41} is elliptic in $\bar{\Omega}\setminus\overline{\Gamma_{cone}^{\infty}\cup\Gamma_{cone}^{\sigma}}$ (for example, see \cite{Serre11}). We also expect to know whether there are some parabolic bubbles inside the domain. For this purpose, we only assume $\psi\geq \sqrt{1+|\bm{\xi}|^{2}}$ in $\bar{\Omega}\setminus\overline{\Gamma_{cone}^{\infty}\cup\Gamma_{cone}^{\sigma}}$ afterwards. Fortunately, with the help of the auxiliary function given below, we can prove that no such parabolic bubbles exist in this domain. The detailed proof of \Cref{thm4} will be given in \Cref{sec:3}.

\section{Unique solvability of \cref{prob1}}\label{sec:3}
Notice that equation \eqref{eq:2.41} is of reflection symmetry with respect to the axes. In addition, the boundaries $\Gamma_{cone}^{\infty}$ and $\Gamma_{cone}^{\sigma}$ are perpendicular to the $\xi_{1}$-axis and the $\xi_{2}$-axis, respectively. Therefore, we can reflect the domain $\Omega$ with respect to the axes to obtain a domain $\Omega_{ext}$, and accordingly extend the function $\psi$ to the domain $\Omega_{ext}$ by an even reflection. For simplicity of notation, we still write $\psi$ for the function after extension. It should be noted that, when $\sigma=\sigma_{0}$ (see \Cref{fg4}), the shock $\Gamma_{cone}^{\infty}$ is not perpendicular to the $\xi_{2}$-axis, and thus the corner point $P_{0}$ cannot be removed by reflecting the domain ${U}$ about $\xi_{2}$-axis. In what follows, we mainly focus on the case $\sigma\in(0,\sigma_{0})$, and only give a brief explanation for the case $\sigma=\sigma_{0}$, if necessary.

\subsection{Comparison principle}\label{sec:3.2.2}
After the extension above, the mixed boundary value problem \eqref{eq:2.42} is reduced to a Dirichlet problem as follows
\begin{equation}\label{eq1:3.2}
	\begin{cases}
		\text{Equation}~\eqref{eq:2.41}\quad&\text{in $\Omega_{ext}$},\\
		\psi=\sqrt{1+|\bm{\xi}|^{2}}\quad&\text{on $\partial\Omega_{ext}$}.
	\end{cases}
\end{equation}

Now, our purpose is to seek a solution to problem \eqref{eq1:3.2} with $\psi\geq \sqrt{1+|\bm{\xi}|^{2}}$ in $\Omega_{ext}$. The main difficulty in solving this problem is that the type of equation \eqref{eq:2.41} in $\Omega_{ext}$ cannot be determined in advance. Thus, we expect to find a sufficient condition to ensure the uniform ellipticity of \eqref{eq:2.41}. It can be verified that equation \eqref{eq:2.41} is uniformly elliptic if and only if there exists a positive number $\varepsilon_{0}>0$ such that  $L^2<1-\varepsilon_{0}$, where $L^2$ is defined by \eqref{eq:2.40}.

Set
\begin{equation}\label{eq:3.3}
	{w}:=\frac{\psi}{\sqrt{1+|\bm{\xi}|^{2}}}.
\end{equation}
A simple computation gives
\begin{align}
	D\psi=\frac{1}{\sqrt{1+|\bm{\xi}|^{2}}}\Big(w\bm{\xi}+(1+|\bm{\xi}|^{2})D{w}\Big),\label{eq3:3.15}\\
	a^2=w^2-1+(1+|\bm{\xi}|^{2})(|D{w}|^2+|D{w}\cdot\bm{\xi}|^2).\label{eq1:3.15}
\end{align}
Since the domain $\Omega_{ext}$ is bounded, we substitute \eqref{eq:2.40} and \eqref{eq3:3.15}--\eqref{eq1:3.15} into the relation $L^2<1-\varepsilon_{0}$ to obtain
\begin{equation}\label{eq1:3.14}
	\frac{|D{w}|^2+|D{w}\cdot\bm{\xi}|^2}{w^2-1}<C(\varepsilon_{0}),
\end{equation}
where the constant $C(\varepsilon_{0})$ depends only on $\varepsilon_{0}$. It follows from  \eqref{eq:3.3} and \eqref{eq1:3.14} that if there exist a bounded constant $C$ and a positive number $\varepsilon_{0}>0$ such that $\psi$ satisfies
\begin{align}
	\sqrt{1+|\bm{\xi}|^{2}}+\varepsilon_{0}\leq\psi&<C,\label{eq4}\\
	\vert D\psi\vert&<C,\label{eq5}
\end{align}
then equation \eqref{eq:2.41}  is uniformly elliptic.

To establish estimates \eqref{eq4}--\eqref{eq5}, as well as the uniqueness of the solutions of problem \eqref{eq1:3.2}, we may expect that equation \eqref{eq:2.41} satisfies a comparison principle, but this is not easy to verify directly. Fortunately, we can proceed by employing the auxiliary function $w$. By \eqref{eq:2.41} and \eqref{eq:3.3}, the equation for $w$ has the form
\begin{multline}\label{eq:3.4}
	a^{2}(\Delta{w}+D^{2}{w}[\bm{\xi},\bm{\xi}])-(1+|\bm{\xi}|^{2})D^{2}{w}[D{w}+(D{w}\cdot\bm{\xi})\bm{\xi},D{w}+(D{w}\cdot\bm{\xi})\bm{\xi}]\\
	+2(w^2-1)D{w}\cdot\bm{\xi}+\frac{w(a^2+w^2-1)}{1+|\bm{\xi}|^{2}}=0.
\end{multline}
For notational simplicity, we denote by $\mathcal{N}_1 w$ the left-hand side of \eqref{eq:3.4}. Now, we prove that equation \eqref{eq:3.4}

\begin{lemma}\label{lemma3}
Let $\Omega_{D}\subset \mathbb{R}^{2}$ be an open bounded domain. Also, let ${w}_{\pm}\in C^{0}(\overline{\Omega_{D}})\cap C^2(\Omega_{D})$ satisfy ${w}_{\pm}>1$ in $\Omega_{D}$. Assume that the operator $\mathcal{N}_1$ is locally uniformly elliptic with respect to either ${w}_{+}$ or $w_-$, and there hold
	\begin{equation*}
		\mathcal{N}_1 w_-\geq0, \quad\mathcal{N}_1 w_+\leq0\quad\text{in}~\Omega_{D}
	\end{equation*}
with $w_-\leq w_+$ on $\partial \Omega_{D}$. Then, it follows that ${w}_{-}\leq{w}_{+}$ in $\Omega_{D}$.
\end{lemma}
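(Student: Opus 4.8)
The plan is to prove this comparison principle by a standard maximum-principle argument applied to the difference, but carried out carefully because the ellipticity hypothesis is only assumed at one of the two functions. The key observation is that equation \eqref{eq:3.4} has no zeroth-order term of the wrong sign: the undifferentiated-$w$ contributions are $2(w^2-1)D w\cdot\bm\xi$ (a first-order term) and $\tfrac{w(a^2+w^2-1)}{1+|\bm\xi|^2}$, and since we assume $w_\pm>1$ and the sound speed satisfies $a^2>0$, this last term has a definite sign that can be exploited. So I expect the argument to reduce to the classical comparison principle for quasilinear elliptic operators with no zeroth-order term threatening the maximum principle.

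First I would argue by contradiction: suppose $\max_{\overline{\Omega_D}}(w_- - w_+)=:m>0$. Since $w_-\le w_+$ on $\partial\Omega_D$, this positive maximum is attained at an interior point $\bm\xi_0\in\Omega_D$. At $\bm\xi_0$ we have $D w_-=D w_+$ and the Hessian of $w_--w_+$ is negative semidefinite. The strategy is then to subtract the two inequalities $\mathcal N_1 w_-\ge 0\ge\mathcal N_1 w_+$ and to interpret the difference $\mathcal N_1 w_- - \mathcal N_1 w_+$ at $\bm\xi_0$ as a linear elliptic operator applied to $v:=w_--w_+$, using the mean-value/interpolation trick: write the difference of the nonlinear operators as $\int_0^1 \tfrac{d}{dt}\mathcal N_1\big(t w_- + (1-t)w_+\big)\,dt$, which produces a linear operator $\mathcal L v = a^{ij}(\bm\xi)\,\partial_{ij}v + b^i(\bm\xi)\,\partial_i v + c(\bm\xi)\,v$ whose principal coefficients $a^{ij}$ are an average of the (elliptic) coefficient matrices along the segment joining $w_+$ and $w_-$.

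The delicate point is that the operator $\mathcal N_1$ is assumed locally uniformly elliptic only with respect to one of $w_\pm$, so I would instead linearize about that single function: writing the principal part of $\mathcal N_1$ as a matrix $A(w,Dw,D^2w$-coefficients$)$ depending on the lower-order data, I would use the fundamental theorem of calculus along the path $t\mapsto (w_+ + t v, D w_+ + t Dv)$ and exploit that at the interior maximum $\bm\xi_0$ one has $v>0$, $Dv=0$, so the first-order coefficients drop out and only the second-order and zeroth-order terms survive. The hard part will be controlling the sign of the resulting coefficient $c(\bm\xi_0)$ of $v$: I expect to show $c\le 0$ using precisely the structure of the two lower-order terms together with $w_\pm>1$ and $a^2>0$, so that the inequality $\mathcal L v(\bm\xi_0)\le 0$ combined with $A\ge0$, $Dv(\bm\xi_0)=0$, $D^2 v(\bm\xi_0)\le 0$, $v(\bm\xi_0)>0$ forces a contradiction — namely $0\le\mathcal N_1 w_- - \mathcal N_1 w_+ = \mathcal L v \le c(\bm\xi_0)\,v(\bm\xi_0)\le 0$ with strict inequality somewhere. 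If the zeroth-order coefficient cannot be shown outright nonpositive, I would instead invoke the strong maximum principle in the form that permits a zeroth-order term of arbitrary sign when applied to the difference $v$ that attains a positive interior maximum, reducing to Hopf's lemma and the classical theory (e.g.\ Gilbarg–Trudinger). The whole argument is local, so the merely \emph{locally} uniform ellipticity hypothesis is exactly what is needed to run the linearization on a neighborhood of $\bm\xi_0$.
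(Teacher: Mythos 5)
Your plan has a genuine gap at exactly the step you flag as ``the hard part'': the zeroth-order coefficient produced by linearizing \eqref{eq:3.4} directly in $w$ is \emph{positive}, not nonpositive. Indeed, by \eqref{eq1:3.15} the undifferentiated term of $\mathcal N_1$ is
\begin{equation*}
\frac{w(a^2+w^2-1)}{1+|\bm{\xi}|^{2}}=\frac{w\bigl(2(w^2-1)+(1+|\bm{\xi}|^{2})(|Dw|^{2}+|Dw\cdot\bm{\xi}|^{2})\bigr)}{1+|\bm{\xi}|^{2}},
\end{equation*}
which for $w>1$ is strictly increasing in $w$ at fixed $(\bm{\xi},Dw)$; this is the wrong monotonicity for a comparison principle. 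Two further terms you cannot discard also contribute zeroth-order pieces of indeterminate sign: the first-order coefficient $2(w^2-1)$ in $2(w^2-1)Dw\cdot\bm{\xi}$ depends on $w$ (its difference between $w_-$ and $w_+$ at the maximum point is $2(w_-+w_+)(Dw\cdot\bm{\xi})\,v$, with no sign), and the principal coefficient $a^{2}$ itself depends on $w$ through $w^{2}-1$, so the linearization picks up $2w(\Delta w+D^{2}w[\bm{\xi},\bm{\xi}])\,v$ with an uncontrolled factor. Your fallback --- invoking a maximum principle ``that permits a zeroth-order term of arbitrary sign'' at a positive interior maximum --- does not exist: for $c>0$ the inequality $\mathcal Lv\le 0$ at an interior maximum of $v$ yields $0\ge a^{ij}\partial_{ij}v\ge -c\,v$, which is no contradiction (think of a first Dirichlet eigenfunction). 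So the contradiction you aim for cannot be closed along this route.

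The paper's proof circumvents all of this with a change of unknown that you would need to discover: since $w_\pm>1$, set $w=\cosh z$ with $z>0$ as in \eqref{eq:3.6}. Because $w^{2}-1=\sinh^{2}z$ and $Dw=\sinh z\,Dz$, dividing the transformed equation by $\sinh^{2}z$ yields \eqref{eq:3.9}, whose principal coefficients depend only on $(\bm{\xi},Dz)$ (the ratio $a^{2}/\sinh^{2}z$ is independent of $z$) and whose lower-order term is non-increasing in $z$ (it is $2Dz\cdot\bm{\xi}$ plus a positive multiple of $1/\tanh z$). These are precisely the hypotheses of Theorem 10.1 in \cite{GT01}, which gives the comparison in $z$ and hence in $w$ by monotonicity of $\cosh$. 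Without this (or an equivalent monotone reformulation), the direct maximum-principle argument on $w_--w_+$ does not go through.
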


\begin{proof}
	Owing to ${w}_{\pm}> 1$ in $\Omega_{D}$, we introduce a function $z>0$ defined implicitly by
	\begin{equation}\label{eq:3.6}
		{w}(\bm{\xi})=\cosh z(\bm{\xi}),
	\end{equation}
	as demonstrated in \cite{Serre11}. Since
	\begin{align}
		Dw&=\sinh zDz,\quad Dw\cdot\bm{\xi}=\sinh zDz\cdot\bm{\xi},\\
		a^2&=\sinh^2 z\big(1+(1+|\bm{\xi}|^2)(|Dz|^2+|Dz\cdot\bm{\xi}|^2)\big),\label{eq:3.7}
	\end{align}
	it follows from \eqref{eq:3.4} that
	\begin{multline}\label{eq:3.9}
		\frac{a^2}{\sinh^2 z}(\Delta z+D^{2}z[\bm{\xi},\bm{\xi}])-(1+|\bm{\xi}|^{2})D^{2}z[Dz+(Dz\cdot\bm{\xi})\bm{\xi},Dz+(Dz\cdot\bm{\xi})\bm{\xi}]\\
		+2\Big(Dz\cdot\bm{\xi}+\frac{a^2}{(1+|\bm{\xi}|^{2})\sinh^2 z\tanh z}\Big)=0.
	\end{multline}
	
	We may as well assume that the operator $\mathcal{N}_1$ is locally uniformly elliptic with respect to $w_+$. Then, we see from \eqref{eq1:3.14} and \eqref{eq:3.6} that the operator defined by the left-hand side of \eqref{eq:3.9} is also locally uniformly elliptic with respect to the corresponding ${z}_{+}$.  In addition, we notice from \eqref{eq:3.7} that the function ${a^2}/{\sinh^2 z}$ is independent of $z$, which means that the principal coefficients of \eqref{eq:3.9} depend only on $Dz$ and $\bm{\xi}$. Also, the lower-order term of  \eqref{eq:3.9} is non-increasing in $z$ for each $(\bm{\xi},Dz)\in \Omega_{D}\times \mathbb{R}^{2}$.
	
	By Theorem 10.1 in \cite{GT01}, such an equation satisfies the comparison principle, in the sense that if $z_{+}$ is a super-solution and $z_{-}$ is a sub-solution of equation \eqref{eq:3.9}, with $z_{-}\leq z_{+}$ on the boundary $\partial\Omega_{D}$, then there holds $z_{-}\leq z_{+}$ everywhere in the domain $\Omega_{D}$. Since the function $\cosh{z}$ is increasing in $(0,+\infty)$, equation \eqref{eq:3.4} also satisfies the comparison principle. This completes the proof.
\end{proof}

\begin{remark}\label{remark:1}
	From \eqref{eq:3.3} and the boundedness of $\Omega_{ext}$, we know that equation \eqref{eq:3.4} has the same type as equation \eqref{eq:2.41}, and solutions of these two problems have the same regularity. So, under the assumption that $\psi\geq\sqrt{1+|\bm{\xi}|^{2}}$ in $\Omega_{ext}$, if we can prove $\psi>\sqrt{1+|\bm{\xi}|^{2}}$ in $\Omega_{ext}$ and equation \eqref{eq:2.41} is locally uniformly elliptic in $\Omega_{ext}$, then the uniqueness of the solution in $C^0(\overline{\Omega_{ext}})\cap C^{2}(\Omega_{ext})$ to problem \eqref{eq1:3.2} can be obtained from \Cref{lemma3} immediately.
\end{remark}

\subsection{Strategy of the proof}\label{sec:3.2.1} In this subsection, we will give a strategy of the proof for the existence of the solution to problem \eqref{eq1:3.2}. Inspired by the work in \cite{Serre09,Serre11} and estimate \eqref{eq4}, we consider the following Dirichlet problem:
\begin{equation}\label{eq:3.13}
	\mathcal{F}(\mu,\psi):=a^{2}(\Delta\psi+D^{2}\psi[\bm{\xi},\bm{\xi}])-\mu D^{2}\psi[D\psi-\chi\bm{\xi},D\psi-\chi\bm{\xi}]=0\quad\text{in $\Omega_{ext}$}
\end{equation}
with
\begin{equation}\label{eq:3.14}
	\psi=\sqrt{1+\vert\bm{\xi}\vert^2}+\varepsilon \quad\text{on $\partial\Omega_{ext}$},
\end{equation}
where $\mu\in[0,1]$ and $\varepsilon>0$ are parameters, and $\chi$ is given by \eqref{eq:2.43}. Obviously, equation \eqref{eq:3.13} can be rearranged in the form
\begin{equation}\label{eq1:3.16}
	\bm{A}(\mu;\bm{\xi}, \psi,D\psi):D^{2}\psi :=\sum^{2}_{i,j=1} A_{ij}\partial_{ij}\psi=0,
\end{equation}
where $\bm{A}(\mu;\bm{\xi}, \psi,D\psi)$ has its expression as shown in \eqref{eq:3.13}.

We find that when $\mu=0$, equation \eqref{eq:3.13} is reduced to a  linear elliptic equation. Such a property motivates us to solve the problem \eqref{eq1:3.2} by the method of continuity. To apply this approach, we analyze equation \eqref{eq:3.13} in the same way as equation \eqref{eq:2.41} in \Cref{sec:3.2.2}.

Denote by $\psi_{\mu,\varepsilon}$ a solution to problem \eqref{eq:3.13}--\eqref{eq:3.14},  and define
\begin{equation*}
	L^{2}_\mu:=\frac{\mu\big(|D\psi|^{2}+|\psi-D\psi\cdot\bm{\xi}|^{2}-\frac{\psi^{2}}{1+|\bm{\xi}|^{2}}\big)}{a^{2}}.
\end{equation*}
It can be verified that equation \eqref{eq:3.13} is elliptic if $L^{2}_\mu<1$, i.e.,
\begin{equation*}
	\psi_{\mu,\varepsilon}>\sqrt{(1+|\bm{\xi}|^2)\Big(1+\frac{\mu-1}{\mu}a^2\Big)}\quad\text{for } \mu\in(0,1].
\end{equation*}
We deduce from this relation that if $\psi_{\mu,\varepsilon}>\sqrt{1+|\bm{\xi}|^2}$, then equation \eqref{eq:3.13} is always elliptic for any $\mu\in[0,1]$. Moreover, using the analysis as in \Cref{sec:3.2.2}, we know that for any $\mu\in[0,1]$, equation \eqref{eq:3.13} is uniformly elliptic if we can find a positive number $\varepsilon_{0}>0$ and a bounded constant $C$ so that
\begin{align}
	\sqrt{1+|\bm{\xi}|^2}+\varepsilon_{0}\leq  \psi_{\mu,\varepsilon}&<C,\label{eq:3.15}\\
	\vert D\psi_{\mu,\varepsilon}\vert&<C.\label{eq2:3.15}
\end{align}

Next, let us verify that the corresponding equation for $w_{\mu,\varepsilon}$ also satisfies the comparison principle (i.e., \Cref{lemma3}), which will enable us to obtain estimate \eqref{eq:3.15} later. By \eqref{eq:3.3} and \eqref{eq:3.13}, the equation for $w_{\mu,\varepsilon}$ is
\begin{multline}\label{eq:3.20}
	a^{2}(\Delta{w}+D^{2}{w}[\bm{\xi},\bm{\xi}])-\mu(1+|\bm{\xi}|^{2})D^{2}{w}[D{w}+(D{w}\cdot\bm{\xi})\bm{\xi},D{w}+(D{w}\cdot\bm{\xi})\bm{\xi}]\\
	+2\big((1-\mu)a^{2}+\mu(w^2-1)\big)D{w}\cdot\bm{\xi}\\+\Big((2-\mu)a^2+\mu(w^2-1)\Big)\frac{w}{1+|\bm{\xi}|^{2}}=0.
\end{multline}
For simplicity, denote by $\mathcal{N}_\mu w$ the left-hand side of equation \eqref{eq:3.20}. As before, we also utilize the auxiliary function $z_{\mu,\varepsilon}$. Using \eqref{eq:3.6}, we have
\begin{multline}\label{eq:3.16}
	(1+m(\bm{\xi},Dz))(\Delta z+D^{2}z[\bm{\xi},\bm{\xi}])-\mu(1+|\bm{\xi}|^{2})D^{2}z[Dz+(Dz\cdot\bm{\xi})\bm{\xi},Dz+(Dz\cdot\bm{\xi})\bm{\xi}]\\
	+2\Big(1+(1-\mu)m(\bm{\xi},Dz)\Big)Dz\cdot\bm{\xi}\\+\Big(2+(1-\mu)m(\bm{\xi},Dz)\Big)\frac{1+m(\bm{\xi},Dz)}{(1+|\bm{\xi}|^{2})\tanh z}=0,
\end{multline}
where $m(\bm{\xi},Dz)$ is defined as
\begin{equation*}
	m(\bm{\xi},Dz):=(1+|\bm{\xi}|^2)(|Dz|^2+|Dz\cdot\bm{\xi}|^2).
\end{equation*}
Let us write equation \eqref{eq:3.16} in the form
\begin{equation}\label{eq1:3.1}
	\sum^{2}_{i,j=1} a_{ij}(\mu;\bm{\xi},Dz)\partial_{ij}z+H(\mu;\bm{\xi},z,Dz)=0.
\end{equation}
It follows from \eqref{eq:3.16} that for any $\mu\in[0,1]$, the coefficients $a_{ij}$ are independent of $z$, and the lower-order term $H(\mu;\bm{\xi},z,Dz)$ is non-increasing in $z$ for each $(\bm{\xi},Dz)\in \Omega_{ext}\times \mathbb{R}^{2}$. Then, we see that equation \eqref{eq:3.16} has the same form as equation \eqref{eq:3.9}. Consequently, the conclusion in \Cref{lemma3} is still valid for equation \eqref{eq:3.20}.

According to the above analysis, we now seek a solution to problem \eqref{eq1:3.2} by the continuity method. Define
\begin{equation}\label{eq1:111}
	\begin{aligned}
		J_{\varepsilon}:=\{\mu\in[0,1]:~&\text{such that}~\psi_{\mu,\varepsilon}\in C^0(\overline{\Omega_{ext}})\cap C^{2}(\Omega_{ext})~\text{satisfies}\\& \eqref{eq:3.13}\text{--}\eqref{eq:3.14} ~\text{with}~\psi_{\mu,\varepsilon}\geq \sqrt{1+\vert\bm{\xi}\vert^2}+\varepsilon~\text{in}~\Omega_{ext}\}.
	\end{aligned}
\end{equation}
Then the strategy of our proof can be described as follows. For any fixed $\varepsilon>0$, we first prove $J_{\varepsilon}=[0,1]$. It is required to verify that $J_{\varepsilon}$ is open, closed and not empty. To this end, we will establish a prior estimates which are independent of $\mu$ and $\varepsilon$ in \Cref{sec:3.2.6}. Next, we show that the limit of $\psi_{1,\varepsilon}$ as $\varepsilon\rightarrow 0+$ is exactly the solution to problem \eqref{eq1:3.2}.

\subsection{Lipschitz estimate}\label{sec:3.2.6}
For simplicity of notation, the subscripts of $\psi_{\mu,\varepsilon}$, $w_{\mu,\varepsilon}$ and $\Phi_{\mu,\varepsilon}$ will be omitted in this subsection.

Let us first derive estimate \eqref{eq:3.15}. Since equation \eqref{eq:3.20} satisfies the comparison principle, it is more convenient to derive the $L^\infty$-estimate for $w$, which in turn leads to the corresponding estimate for $\psi$ directly.

\begin{lemma}\label{lemma: inf-estimate}
	Let $w \in C^{0}(\overline{\Omega_{ext}})\cap C^{2}(\Omega_{ext})$ satisfy $\mathcal{N}_\mu w=0$, ${w}>1$ in $\Omega_{ext}$ and $w=1+\varepsilon$ on $\partial \Omega_{ext}$. Then there exists a constant $C>0$, independent of $\mu$ and $\varepsilon$, such that
	\begin{equation}\label{eq1:3.13}
		1+\varepsilon<{w}\leq C\quad\text{in $\Omega_{ext}$},
	\end{equation}
	where $\mathcal{N}_\mu w=0$ denotes equation \eqref{eq:3.20}.
\end{lemma}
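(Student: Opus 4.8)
The plan is to treat the two inequalities in \eqref{eq1:3.13} by two different mechanisms, exploiting throughout the standing hypothesis $w>1$. By the ellipticity criterion of \Cref{sec:3.2.1}, $w>1$ (equivalently $\psi>\sqrt{1+|\bm{\xi}|^2}$) forces the operator $\mathcal{N}_\mu$, and hence the associated $z$-operator \eqref{eq:3.16}, to be locally uniformly elliptic with respect to the solution itself on every compact subset of $\Omega_{ext}$. This is precisely the hypothesis needed to invoke the comparison principle for \eqref{eq:3.20} established at the end of \Cref{sec:3.2.1}.

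For the lower bound I would compare $w$ against the constant $w_-\equiv 1+\varepsilon$. Inserting any constant $c>1$ into \eqref{eq:3.20}, every derivative term drops out and, using \eqref{eq1:3.15} so that $a^2=c^2-1$ at a constant, one is left with
\begin{equation*}
	\mathcal{N}_\mu c=\frac{2c(c^2-1)}{1+|\bm{\xi}|^2}>0,
\end{equation*}
so $w_-=1+\varepsilon$ is a strict subsolution. Since $w_-=w$ on $\partial\Omega_{ext}$ and $\mathcal{N}_\mu w=0$, the comparison principle for \eqref{eq:3.20} yields $w\ge 1+\varepsilon$ in $\Omega_{ext}$; the strong maximum principle, applied in the $z$-variable where the lower-order term of \eqref{eq:3.16} is strictly decreasing in $z$, upgrades this to the strict inequality $w>1+\varepsilon$, because a strict subsolution cannot touch the solution from below at an interior point.

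The upper bound is where the real difficulty lies, and the constant-barrier idea fails outright: the display above shows that \emph{every} constant $c>1$ is a subsolution of \eqref{eq:3.20}, so no constant can serve as an upper barrier for $w$; this is the effect of the positive zeroth-order term in \eqref{eq:3.20}. The way around this is to abandon the $w$-equation and return to the $\psi$-equation \eqref{eq:3.13}, which is purely of second order: written as $\sum_{i,j}A_{ij}\partial_{ij}\psi=0$ in \eqref{eq1:3.16}, it carries no zeroth- or first-order term in $\psi$. Since $w>1$ makes $\psi>\sqrt{1+|\bm{\xi}|^2}$, the matrix $(A_{ij})$ is positive definite, so freezing the coefficients along the solution gives a linear elliptic operator with $b\equiv 0$ and $c\equiv 0$. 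The weak maximum principle (Theorem 3.1 in \cite{GT01}) then forces $\psi$ to attain its maximum on $\partial\Omega_{ext}$, whence
\begin{equation*}
	\psi\le \max_{\partial\Omega_{ext}}\psi=:C,
\end{equation*}
a bound that is finite because $\Omega_{ext}$ is bounded, and independent of $\mu$ and of $\varepsilon\in(0,1]$ since the boundary value of $\psi$ is $O(1)$. As $\sqrt{1+|\bm{\xi}|^2}\ge 1$, this finally gives $w=\psi/\sqrt{1+|\bm{\xi}|^2}\le\psi\le C$, completing \eqref{eq1:3.13}. I expect the only delicate point to be justifying the weak maximum principle under mere local (rather than global uniform) ellipticity on $\Omega_{ext}$, which is handled by the standard $\delta$-perturbation argument on a compact exhaustion of the domain.
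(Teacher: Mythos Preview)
Your argument is correct and establishes the lemma as stated, but it follows a genuinely different route from the paper. For the lower bound you use the constant $1+\varepsilon$ as a strict subsolution together with the strong maximum principle; for the upper bound you exploit that the $\psi$-equation \eqref{eq1:3.16} is purely of second order and apply the weak maximum principle directly to $\psi$. The paper instead builds an explicit one-parameter family of exact solutions $w^{\bm{\eta}}(\bm{\xi})=\bm{\eta}\cdot(\bm{\xi},1)/\sqrt{1+|\bm{\xi}|^2}$, arising from linear potentials $\Phi^{\bm{\eta}}$ in three space, and takes $w^{+}=\inf_{\bm{\eta}\in\Sigma_+}w^{\bm{\eta}}$ and $w^{-}=\sup_{\bm{\eta}\in\Sigma_-}w^{\bm{\eta}}$ as sharp barriers.

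Your approach is more elementary and, notably, does not invoke the convexity of $\Omega_{ext}$ at all. What the paper's construction buys, however, is indispensable downstream. The barriers $\psi^{\pm}=\sqrt{1+|\bm{\xi}|^2}\,w^{\pm}$ satisfy $\psi^{\pm}=\psi$ on $\partial\Omega_{ext}$ (this is where convexity enters, via \eqref{eq:3.29}), and that boundary agreement is precisely what drives the boundary gradient bound \eqref{eq:3.34} in \Cref{lemma: lip-estimate}. Moreover, the paper's lower barrier gives the quantitative interior estimate $w^{-}\geq 1+\varepsilon+\delta$ on compact subsets with $\delta$ independent of $\varepsilon$ (inequality \eqref{eq1:3.21}); this is what survives the limit $\varepsilon\to 0$ in \Cref{sec:3.2.5} and yields local uniform ellipticity of the limiting equation. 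Your constant subsolution gives only $w>1+\varepsilon$, which degenerates to $w\geq 1$ as $\varepsilon\to 0$ and cannot by itself rule out parabolic bubbles in the limit. So while your proof closes this lemma cleanly, the paper's barrier family is doing double duty that your argument would have to replace by other means in the sections that follow.
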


\begin{proof}
	The main work of the proof is to find proper sub- and super-solutions to the equation $\mathcal{N}_\mu w=0$ with boundary condition $w|_{\partial\Omega_{ext}}=1+\varepsilon$. For this purpose, we first seek an exact solution to the equation $\mathcal{N}_\mu w=0$. By \eqref{eq:2.32}--\eqref{eq1:2.12} and \eqref{eq:3.13}, the corresponding $\Phi$ is a solution of
	\begin{equation}\label{eq:3.25}
		(|\nabla_{\bm{x}}\Phi|^2-B_{\infty})\Delta_{\bm{x}}\Phi-\mu D^2_{\bm{x}}\Phi[\nabla_{\bm{x}}\Phi,\nabla_{\bm{x}}\Phi]=0.
	\end{equation}
	Notice that equation \eqref{eq:3.25} is independent of $\Phi$ itself. Then,  for any given constant vector $\bm{\eta}\in \mathbb{R}^{3}$, the linear function $\Phi^{\bm{\eta}}=\sqrt{B_{\infty}}\bm{\eta}\cdot\bm{x}$ is a solution to \eqref{eq:3.25}. From \eqref{eq:2.32}--\eqref{eq1:2.12} and \eqref{eq:3.3}, we know that
	\begin{equation}\label{eq:3.26}
		{w}^{\bm{\eta}}(\bm{\xi})=\frac{\bm{\eta}\cdot(\bm{\xi},1)}{\sqrt{1+|\bm{\xi}|^{2}}}
	\end{equation}
	is an exact solution to the equation $\mathcal{N}_\mu w=0$. A simple computation gives
	\begin{equation}\label{eq1:3.26}
		\partial_i {w}^{\bm{\eta}}(\bm{\xi})=\frac{\sqrt{1+|\bm{\xi}|^{2}}\eta_i-{w}^{\bm{\eta}}\xi_i}{1+|\bm{\xi}|^{2}}\quad\text{ for }i=1,2,
	\end{equation}
	where $\partial_i$ stands for $\partial_{\xi_i}$.
	
	For any fixed $\varepsilon>0$, let us define the sets
	\begin{align}
		\Sigma_{+}&:=\{\bm{\eta}\in\mathbb{R}^{3}: {w}^{\bm{\eta}}>1+\varepsilon\text{ on }\partial\Omega_{ext}\},\\
		\Sigma_{-}&:=\{\bm{\eta}\in\mathbb{R}^{3}: {w}^{\bm{\eta}}<1+\varepsilon\text{ on }\partial\Omega_{ext}\}.\label{eq:3.2}
	\end{align}
	It follows from \eqref{eq:3.26} that for any fixed constant vector $\bm{\eta}\in \mathbb{R}^{3}$, ${w}^{\bm{\eta}}(\bm{\xi})$ is a decreasing function of the angle between $\frac{(\xi_{1},\xi_{2},1)}{\sqrt{1+|\bm{\xi}|^{2}}}$ and $\bm{\eta}$, which implies that when $\bm{\eta}\in \Sigma_{+}$, the function ${w}^{\bm{\eta}}$ is larger than $1+\varepsilon$ everywhere in $\Omega_{ext}$. Furthermore, for any compact subdomain $\Omega^c_{sub}\subset\Omega_{ext}$, we have
	\begin{equation}\label{eq1:3.20}
		{w}^{\bm{\eta}}>1+\varepsilon+\delta\quad\text{in $\Omega^c_{sub}$},
	\end{equation}
	where the constant $\delta>0$ depends only on $\Omega^c_{sub}$.
	
	We deduce from \eqref{eq:3.26}--\eqref{eq1:3.26} and \eqref{eq1:3.20} that when $\bm{\eta}\in \Sigma_{+}$, the operator $\mathcal{N}_\mu$ is locally uniformly elliptic with respect to ${w}^{\bm{\eta}}$. Then, from \Cref{lemma3}, it follows that $w\leq {w}^{\bm{\eta}} $ in $\Omega_{ext}$. Taking ${w}^{+}$ as the infimum of all these ${w}^{\bm{\eta}}$, namely,
	\begin{equation}\label{eq:3.28}
		{w}^{+}=\inf\{{w}^{\bm{\eta}}:\bm{\eta}\in \Sigma_{+}\},
	\end{equation}
	we obtain $w\leq {w}^{+}$ in $\Omega_{ext}$ and $w^+\geq 1+\varepsilon$ on $\partial\Omega_{ext}$.  In particular, the convexity of the domain $\Omega_{ext}$ yields
	\begin{equation}\label{eq:3.29}
		{w}^{+}=1+\varepsilon\quad\text{on $\partial\Omega_{ext}$}.
	\end{equation}
	
	The situation is analogous for sub-solutions. Define
	\begin{equation}\label{eq:3.30}
		{w}^{-}:=\sup\{{w}^{\bm{\eta}}:\bm{\eta}\in \Sigma_{-}\}.
	\end{equation}
	Then we have $\mathcal{N}_{\mu}w^-\geq0$ in $\Omega_{ext}$.
	
	We claim here that for any compact subdomain $\Omega^c_{sub}\subset\Omega_{ext}$, there exists a positive number $\delta>0$ depending only on $\Omega^c_{sub}$ such that
	\begin{equation}\label{eq1:3.21}
		{w}^{-}\geq 1+\varepsilon+\delta\quad\text{in $\Omega^c_{sub}$}.
	\end{equation}
	For any fixed point $\bm{\xi}_{0}=(\xi_{10},\xi_{20})\in \Omega^c_{sub}$, we can choose a suitably small constant $0<\delta\ll 1$ and a constant vector $\bm{\eta}_{0}=(1+\varepsilon+\delta)\frac{(\xi_{10},\xi_{20},1)}{\sqrt{1+|\bm{\xi}_{0}|^{2}}}$ such that $\bm{\eta}_{0}$ belongs to $\Sigma_{-}$. Therefore, by \eqref{eq:3.30} and the definition of $w^-$, we obtain the relation \eqref{eq1:3.21}.
	
	By the continuity of the function ${w}^{-}$, \eqref{eq1:3.21} implies ${w}^{-}>1+\varepsilon$ in $\Omega_{ext}$ and ${w}^{-}\geq 1+\varepsilon$ on $\partial\Omega_{ext}$. In addition, it is clear that ${w}^{-}\leq 1+\varepsilon$ on $\partial \Omega_{ext}$. Consequently, we have
	\begin{equation}\label{eq:3.31}
		{w}^{-}= 1+\varepsilon\quad\text{on $\partial\Omega_{ext}$}.
	\end{equation}
	It follows from \eqref{eq:3.26}--\eqref{eq1:3.26} and \eqref{eq1:3.21} that the operator $\mathcal{N}_\mu$ is locally uniformly elliptic with respect to ${w}^{-}$. According to \Cref{lemma3}, we obtain $w\geq w^-$ in $\Omega_{ext}$.
	
	We conclude from the discussion above that
	\begin{equation}\label{eq1:3.10}
		1+\varepsilon<{w}^{-}\leq{w}\leq{w}^{+}\quad\text{in $\Omega_{ext}$},
	\end{equation}
	where ${w}^{\pm}$ satisfy the conditions \eqref{eq:3.29} and \eqref{eq:3.31} on $\partial\Omega_{ext}$. This completes the proof.
\end{proof}

\begin{remark}\label{remmarklbs6}
	It is shown from the above proof that only quality \eqref{eq:3.29} depends on the convexity of $\Omega_{ext}$, while \eqref{eq:3.31} holds even for the domain $\Omega_{ext}$ being non-convex.
\end{remark}

On account of the boundedness of $\Omega_{ext}$, we do not distinguish the small numbers $\varepsilon$ and $\sqrt{1+|\bm{\xi}|^2} \varepsilon$,  and write them as $\varepsilon$ all the time. Then from \eqref{eq:3.3} and \eqref{eq1:3.10}, we obtain
\begin{equation}\label{eq1:3.17}
	\sqrt{1+|\bm{\xi}|^2}+\varepsilon<\psi^-\leq{\psi}\leq\psi^+\leq C\quad\text{in $\Omega_{ext}$},
\end{equation}
where $\psi^{\pm}=\sqrt{1+|\bm{\xi}|^2} w^{\pm}$, and the constant $C$ is independent of $\mu$ and $\varepsilon$. Moreover, there still holds
\begin{equation}\label{eq1:3.18}
	{\psi}={\psi}^{\pm}\quad\text{on $\partial\Omega_{ext}$}.
\end{equation}

Next, we establish the Lipschitz estimate \eqref{eq2:3.15}.

\begin{lemma}\label{lemma: lip-estimate}
	Let $\psi \in C^{0}(\overline{\Omega_{ext}})\cap C^{2}(\Omega_{ext})$ satisfy problem  \eqref{eq:3.13}--\eqref{eq:3.14} and $\psi>\sqrt{1+|\bm{\xi}|^2}$ in $\Omega_{ext}$. Then there exists a constant $C>0$, independent of $\mu$ and $\varepsilon$, such that
	\begin{equation}\label{eq:3.42}
		\|D{\psi}\|_{L^{\infty}(\Omega_{ext})}\leq C.
	\end{equation}
\end{lemma}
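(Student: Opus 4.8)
The plan is to bound the physical velocity rather than $D\psi$ directly. Recalling from \eqref{eq:2.32}--\eqref{eq1:2.12} that $\nabla_{\bm{x}}\Phi=\sqrt{B_{\infty}}\,(D\psi,\,\psi-D\psi\cdot\bm{\xi})$ and that $\psi$ is already controlled by the $L^{\infty}$ bound \eqref{eq1:3.17}, it suffices to bound $|\nabla_{\bm{x}}\Phi|$; and since the three Cartesian components $v_{k}=\partial_{x_{k}}\Phi$ are homogeneous of degree zero, each $v_{k}$ descends to a function of $\bm{\xi}$ on $\Omega_{ext}$. The crucial structural observation is that equation \eqref{eq:3.25} for $\Phi$ does not involve $\Phi$ itself. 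Differentiating \eqref{eq:3.25} in $x_{k}$ therefore produces, for each $v_{k}$, a homogeneous linear second-order equation whose principal part is exactly the linearization of \eqref{eq:3.25} and which carries \emph{no} zeroth-order term. Under the standing hypothesis $\psi>\sqrt{1+|\bm{\xi}|^{2}}$ this principal part is strictly elliptic in the interior, so the weak maximum principle applies: each $v_{k}$ attains its maximum and minimum over $\overline{\Omega_{ext}}$ on $\partial\Omega_{ext}$ (passing freely between the solid cone and its cross-section by homogeneity). Consequently $\max_{\overline{\Omega_{ext}}}|\nabla_{\bm{x}}\Phi|\le\max_{\partial\Omega_{ext}}|\nabla_{\bm{x}}\Phi|$, and the interior gradient bound is reduced to a boundary one. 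I prefer this route over a Bernstein-type argument precisely because it bypasses the ellipticity constant, which degenerates as $\varepsilon\to0$, and so has a chance of being uniform.

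It then remains to estimate the velocity on $\partial\Omega_{ext}$, and here I would reuse the barriers built in \Cref{lemma: inf-estimate}. By \eqref{eq1:3.10} together with \eqref{eq:3.29} and \eqref{eq:3.31} we have $w^{-}\le w\le w^{+}$ in $\Omega_{ext}$ with $w^{\pm}=1+\varepsilon=w$ on $\partial\Omega_{ext}$. Each member $w^{\bm{\eta}}$ of the family has gradient given explicitly by \eqref{eq1:3.26}, which is bounded uniformly on the bounded domain $\Omega_{ext}$ once $\bm{\eta}$ is confined to a fixed compact set; and since $\Omega_{ext}$ is bounded and $w^{\pm}$ are themselves bounded by \eqref{eq1:3.10}, the infimum and supremum defining $w^{\pm}$ may be restricted to such a compact set of vectors $\bm{\eta}$, uniformly for $\mu\in[0,1]$ and $\varepsilon\le1$. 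Hence $w^{\pm}$ are Lipschitz with a constant $C$ independent of $\mu$ and $\varepsilon$. For any boundary point $P$ and interior point $\bm{\xi}$ the squeeze then gives $|w(\bm{\xi})-w(P)|\le C|\bm{\xi}-P|$, which bounds both the tangential and the inner-normal difference quotients of $w$ at $P$; combined with $\psi=\sqrt{1+|\bm{\xi}|^{2}}\,w$ and the bound on $\psi$, this yields $|\nabla_{\bm{x}}\Phi|\le C$ on $\partial\Omega_{ext}$ uniformly in $\mu,\varepsilon$.

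Putting the two steps together gives $|\nabla_{\bm{x}}\Phi|\le C$ throughout $\overline{\Omega_{ext}}$, whence $\|D\psi\|_{L^{\infty}(\Omega_{ext})}\le C$ with $C$ independent of $\mu$ and $\varepsilon$, as claimed. I expect the main obstacle to lie in making the maximum-principle step fully rigorous near the boundary and at the corners of $\Omega_{ext}$: as $\varepsilon\to0$ the equation degenerates (becomes sonic) on $\partial\Omega_{ext}$, so the linear equations for the $v_{k}$ lose ellipticity there, and one must ensure that the reduction of the interior maximum to $\partial\Omega_{ext}$ survives this degeneration, the presence of the corner $P_{1}$, and the non-removable corner at $P_{0}$ in the limiting case $\sigma=\sigma_{0}$ (cf.\ \Cref{remmarklbs6}); for fixed $\varepsilon>0$ one can lean on interior elliptic regularity to give $v_{k}\in C^{2}(\Omega_{ext})\cap C^{0}(\overline{\Omega_{ext}})$ away from corners, but the corners themselves require separate barrier arguments. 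A secondary point demanding care is the verification that the vectors $\bm{\eta}$ relevant to $w^{\pm}$ genuinely stay in a compact set independent of $\varepsilon$, since this is exactly what makes the boundary Lipschitz constant—and hence the whole estimate—uniform.
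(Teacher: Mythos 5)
Your boundary step is the same as the paper's: both squeeze $w$ between the barriers $w^{\pm}$ of \Cref{lemma: inf-estimate}, which coincide with $w$ on $\partial\Omega_{ext}$ and whose gradients are controlled through the explicit formula \eqref{eq1:3.26}; your remark that the relevant vectors $\bm{\eta}$ can be confined to a fixed compact set is a detail the paper leaves implicit, and it is worth checking (it follows from the uniform bounds \eqref{eq1:3.10} on $w^{\pm}$ over the bounded set $\Omega_{ext}$). The interior step, however, is genuinely different. The paper runs a Bernstein-type argument: it derives equation \eqref{eq:3.39} for $\tfrac12|D\psi|^{2}$, observes that the positive-definite term \eqref{eq:3.40} forces $|D\psi|^{2}$ to attain its maximum on $\partial\Omega_{ext}$, and handles the division by $\partial_j\psi$ in \eqref{eq1:3.40} via a rotation. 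You instead invoke the maximum principle for the individual velocity components $v_k=\partial_{x_k}\Phi$, exploiting that \eqref{eq:3.25} contains no $\Phi$ and that the $v_k$ are homogeneous of degree zero. Your route avoids the paper's awkward division by $\partial_j\psi$, and both routes rely on the same ellipticity hypothesis $\psi>\sqrt{1+|\bm{\xi}|^{2}}$; neither costs anything as $\varepsilon\to0$, since the weak maximum principle needs only pointwise (not uniform) ellipticity in the interior.

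One assertion in your interior step is wrong as written and must be repaired. The principal part of the linearization of \eqref{eq:3.25} in the physical variables $\bm{x}$ is \emph{never} elliptic here: by \eqref{eq:2.29} one has $c^{2}=|\nabla_{\bm{x}}\Phi|^{2}-B_{\infty}<|\nabla_{\bm{x}}\Phi|^{2}$, so (at $\mu=1$) the three-dimensional symbol $c^{2}|\bm{\zeta}|^{2}-|\nabla_{\bm{x}}\Phi\cdot\bm{\zeta}|^{2}$ has signature $(+,+,-)$ — the downstream flow is genuinely supersonic, which is the whole point of \Cref{sec:2.1}. Hence you cannot apply a maximum principle to $v_k$ on the three-dimensional cone. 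What saves the argument is exactly the homogeneity you mention in passing: restricting the differentiated equation to degree-zero homogeneous functions produces a \emph{two-dimensional} linear equation in $\bm{\xi}$, still with no zeroth-order term, whose principal symbol is the three-dimensional one evaluated on covectors annihilating the radial direction; its definiteness on that plane is equivalent to $L^{2}<1$ in \eqref{eq:2.40}, i.e.\ precisely to $\psi>\sqrt{1+|\bm{\xi}|^{2}}$ (compare \eqref{eq:B.6} and \eqref{eq:2.35}). With the ellipticity claim relocated to this reduced operator, your argument goes through and yields \eqref{eq1:3.11} for the velocity components, after which the boundary barrier closes the estimate exactly as in the paper.
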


\begin{proof}
	First, we consider the estimate on the boundary. It follows from \eqref{eq1:3.17}--\eqref{eq1:3.18} that
	\begin{equation}\label{eq:3.34}
		\|D{\psi}\|_{L^{\infty}(\partial\Omega_{ext})}\leq \|D{\psi}^{+}\|_{L^{\infty}(\partial\Omega_{ext})}.
	\end{equation}
	Also, by \eqref{eq1:3.26} and \eqref{eq:3.28}, and noting $\psi^{+}=\sqrt{1+|\bm{\xi}|^2} w^{+}$, we know that the right-hand side above is bounded by a constant $C$, independent of $\mu$ and $\varepsilon$.
	
	Then, we turn to the interior Lipschitz estimate. For any function $\psi\in C^2(\Omega_{ext})$, since we can choose a sequence $\{\psi_l\}\subset C^3(\Omega_{ext})$ such that $\psi_l\rightarrow\psi$ in $C^2(\Omega_{ext})$ as $l\rightarrow \infty$, we may as well assume that $\psi\in C^3(\Omega_{ext})$.
	
	For equation \eqref{eq1:3.16}, a standard calculation shows that the function $\frac{1}{2}|D\psi|^{2}$ satisfies
	\begin{equation}\label{eq:3.37}
		\bm{A}:D^{2}(\frac{1}{2}|D\psi|^{2})=\sum^{2}_{k=1}(D\partial_k\psi)^{T}\cdot \bm{A}\cdot (D\partial_k\psi)+\sum^{2}_{i,j,k=1}A_{ij}\partial_{k}\psi\partial_{ijk}\psi.
	\end{equation}
	By differentiating $\bm{A}:D^{2}\psi=0$ with respect to $\xi_{k}$ and multiplying on its both sides by $\partial_{k}\psi$, we have
	\begin{equation}\label{eq:3.38}
		\sum^{2}_{i,j=1}\partial_{k}(A_{ij})\partial_{ij}\psi\partial_{k}\psi+\sum^{2}_{i,j=1}A_{ij}\partial_{k}\psi\partial_{ijk}\psi=0.
	\end{equation}
	Plugging \eqref{eq:3.38} into \eqref{eq:3.37} gives
	\begin{equation}\label{eq:3.39}
		\bm{A}:D^{2}(\frac{1}{2}|D\psi|^{2})=\sum^{2}_{k=1}(D\partial_k\psi)^{T}\cdot\bm{A}\cdot (D\partial_k\psi)+\sum^{2}_{k=1}h_k(\bm{\xi},\psi,D\psi)\partial_{k}(\frac{1}{2}|D\psi|^{2}),
	\end{equation}
	where $h_k$ is given by
	\begin{equation}\label{eq1:3.40}
		h_k(\bm{\xi},\psi,D\psi)=-\sum^{2}_{i,j=1}\partial_{i}(A_{kj})\frac{\partial_i \psi}{\partial_j \psi}.
	\end{equation}
	
	Also, it follows from \eqref{eq1:3.17} that equation \eqref{eq1:3.16} is elliptic in $\Omega_{ext}$. Then the matrix $\bm{A}$ is positive definite, i.e.,
	\begin{equation}\label{eq:3.40}
		\sum^{2}_{k=1}(D\partial_k\psi)^{T}\cdot \bm{A}\cdot (D\partial_k\psi)>0.
	\end{equation}
	
	We assert here that the function $|D\psi|^{2}$ cannot achieve its maximum at any interior point unless it is constant. In fact, for any interior point $\tilde{\bm{\xi}}_0\in \Omega_{ext}$, by \eqref{eq:3.39} and \eqref{eq:3.40}, this conclusion is established when $D\psi(\tilde{\bm{\xi}}_0)=0$, or both $\partial_1\psi(\tilde{\bm{\xi}}_0)$ and $\partial_2\psi(\tilde{\bm{\xi}}_0)$ equal zero. Then it suffices to discuss the case that either $\partial_1\psi(\tilde{\bm{\xi}}_0)$ or $\partial_2\psi(\tilde{\bm{\xi}}_0)$  equals zero. In either case, we can choose a rotation transformation in $(\xi_1,\xi_2)$-plane such that $\partial_1\psi\neq 0$ and $\partial_2\psi\neq 0$ at the corresponding point of $\tilde{\bm{\xi}}_0$ in the new coordinates. Also note that, since equation \eqref{eq1:3.16} and the function $|D\psi|^{2}$ are rotation invariant, the form of equation \eqref{eq:3.39} still holds under the rotation transformation. In the new coordinates, using \eqref{eq:3.39} and \eqref{eq:3.40} again, we can verify this assertion. Therefore, its maximum is achieved on the boundary, and we obtain
	\begin{equation}\label{eq1:3.11}
		\|D\psi\|_{L^{\infty}(\Omega_{ext})}\leq \|D\psi\|_{L^{\infty}(\partial\Omega_{ext})}.
	\end{equation}
	Combining this estimate and \eqref{eq:3.34}, we complete the proof of the lemma.
\end{proof}

\subsection{Continuation procedure}\label{sec:3.2.5}
Now we are ready to solve problem \eqref{eq1:3.2} by using the strategy mentioned in \Cref{sec:3.2.1}. For a fixed $\varepsilon>0$, we first prove $J_{\varepsilon}=[0,1]$, which will be divided into three steps.

\textit{Step} 1. $J_{\varepsilon}$ is not empty. To this end, we consider
\begin{equation}\label{eq:3.43}
	\mathcal{F}(0,\psi):=\Delta\psi+D^{2}\psi[\bm{\xi},\bm{\xi}]=0\quad\text{in $\Omega_{ext}$}
\end{equation}
with the boundary condition \eqref{eq:3.14}. This is a Dirichlet problem for a linear elliptic equation, which is uniquely solvable by the Fredholm alternative. Since $\Gamma_{cone}^{\infty}$ is tangent to $\Gamma_{cone}^{\sigma}$ at the point $P_{1}$ and both $\Gamma_{cone}^{\infty}$ and $\Gamma_{cone}^{\sigma}$ are smooth, the boundary $\partial\Omega_{ext}\in C^{1}$. Thanks to Theorem 6.13 and 6.17 in \cite{GT01}, we have $\psi_{0,\varepsilon}\in  C^{0}(\overline{\Omega_{ext}})\cap C^{\infty}(\Omega_{ext})$. In addition, we obtain $\psi_{0,\varepsilon}\geq \sqrt{1+|\bm{\xi}|^2}+\varepsilon$ in the $\Omega_{ext}$ by the maximum principle. Then, $0\in J_{\varepsilon}$.

\textit{Step} 2. $J_{\varepsilon}$ is closed. From \eqref{eq1:111}, we see that if $\mu\in J_{\varepsilon}$, then the corresponding function $\psi_{\mu,\varepsilon}$ satisfies estimates \eqref{eq1:3.17} and \eqref{eq:3.42}. Thus we have $\psi_{\mu,\varepsilon}\in Lip(\overline{\Omega_{ext}})\cap C^{2}(\Omega_{ext})$. Moreover, the linearized equation
\begin{equation}\label{eq:3.45}
	\bm{A}(\mu;\bm{\xi}, \psi_{\mu,\varepsilon}, D\psi_{\mu,\varepsilon}):D^2 \psi=a^{2}(\Delta \psi+D^{2} \psi[\bm{\xi},\bm{\xi}])-\mu D^{2}\psi[D\psi_{\mu,\varepsilon}-\chi\bm{\xi},D\psi_{\mu,\varepsilon}-\chi\bm{\xi}]
\end{equation}
is uniformly elliptic. By Theorem 6.17 in \cite{GT01},  we obtain the interior estimates for all the derivatives of $\psi_{\mu,\varepsilon}$, which implies $\psi_{\mu,\varepsilon}\in Lip(\overline{\Omega_{ext}})\cap  C^{\infty}(\Omega_{ext})$.

Let $(\mu_{m},\psi_{m,\varepsilon})$ be a sequence such that $\mu_{m}\in J_{\varepsilon}$ and $\psi_{m,\varepsilon}$ is the solution of the corresponding problem. Since $\psi_{m,\varepsilon}$ is bounded in $Lip(\overline{\Omega_{ext}})\cap  C^{\infty}(\Omega_{ext})$, there exists a subsequence $\mu_{m_{k}}$ such that the corresponding $\psi_{m_{k},\varepsilon}$ are convergent in $C^{0}(\overline{\Omega_{ext}})\cap C^{2}(\Omega_{ext})$. Obviously, the limit $\psi_{m_{\infty},\varepsilon}$ is a solution to problem \eqref{eq:3.13}--\eqref{eq:3.14} and $\psi_{m_{\infty},\varepsilon}\geq \sqrt{1+|\bm{\xi}|^2}+\varepsilon$ in the $\Omega_{ext}$. Then $\mu_{\infty}\in J_{\varepsilon}$, which means that $J_{\varepsilon}$ is closed.

\textit{Step} 3. $J_{\varepsilon}$ is open. Let $\mu_{0}\in J_{\varepsilon}$ and $\psi_{\mu_{0},\varepsilon}$ be the solution of the corresponding problem. We work in terms of the auxiliary function $z_{\mu,\varepsilon}$. The linearization of equation \eqref{eq1:3.1} at $z_{\mu_{0},\varepsilon}$ is
\begin{equation}\label{eq:3.46}
	\sum^{2}_{i,j=1} a_{ij}(\mu_0;\bm{\xi},Dz_{\mu_{0},\varepsilon})\partial_{ij}z+b_i(\mu_0;\bm{\xi},Dz_{\mu_{0},\varepsilon})\partial_i z+d(\mu_0;\bm{\xi},Dz_{\mu_{0},\varepsilon})z=f,
\end{equation}
where
\begin{align*}
	b_{i}&=\sum^{2}_{j=1}\partial_{p_i}a_{ij}(\mu_{0};\bm{\xi},Dz_{\mu_{0},\varepsilon})\partial_{ij}z_{\mu_{0},\varepsilon}+\partial_{p_i}H(\mu_{0};\bm{\xi},z_{\mu_{0},\varepsilon},Dz_{\mu_{0},\varepsilon})\quad\text{$i=1,2$},\\
	d&=\partial_{p_0}H(\mu_{0};\bm{\xi},z_{\mu_{0},\varepsilon},Dz_{\mu_{0},\varepsilon})
\end{align*}
with $(p_0,p_1,p_2):=(z,Dz)$. Since $d(\mu_0;\bm{\xi},Dz_{\mu_{0},\varepsilon})\leq 0$,  equation \eqref{eq:3.46} satisfies the maximum principle. Then the corresponding Dirichlet problem is uniquely solvable by the Fredholm alternative. This means that the linearized operator of the nonlinear mapping $\mu\mapsto\psi_{\mu,\varepsilon}$ is invertible at $(\mu_{0},\psi_{\mu_{0},\varepsilon})$. Therefore, by the implicit function theorem, there exists a neighborhood of $\mu_{0}$ such that the mapping $\mu\mapsto\psi_{\mu,\varepsilon}$ exists, namely, $\mu_{0}$ is an interior point of $J_{\varepsilon}$.

Note that $J_{\varepsilon}$ is both open and closed with $0\in J_{\varepsilon}$, we obtain $J_{\varepsilon}=[0,1]$, which indicates the existence of $\psi_{1,\varepsilon}$.

Next we turn to the existence of the solution to problem \eqref{eq1:3.2}.  We consider the limit of $\psi_{1,\varepsilon}$ as $\varepsilon\rightarrow 0+$.  As discussion above, we have $\psi_{1,\varepsilon}\in Lip(\overline{\Omega_{ext}})\cap C^\infty(\Omega_{ext})$. Noticing that estimates \eqref{eq1:3.17} and \eqref{eq:3.42} are independent of $\varepsilon$, then by the Arzela-Ascoli theorem, the function $\psi_{1,\varepsilon}$ is convergent in the class of $C^0(\overline{\Omega_{ext}})\cap C^2(\Omega_{ext})$.
Therefore, we can define
\begin{equation*}\label{eq:3.47}
	\psi : = \lim_{\varepsilon\rightarrow 0+}\psi_{1,\varepsilon},
\end{equation*}
which is a solution to problem \eqref{eq1:3.2}. Also note that, estimate \eqref{eq1:3.17} implies $\psi_{1,\varepsilon}>\sqrt{1+|\bm{\xi}|^2}+\varepsilon$ in $\Omega_{ext}$, though we only assume $\psi_{1,\varepsilon}\geq\sqrt{1+|\bm{\xi}|^2}+\varepsilon$ in $\Omega_{ext}$. Then,  we have $\psi>\sqrt{1+|\bm{\xi}|^2}$ in $\Omega_{ext}$; namely, there is no parabolic bubble in the domain $\bar{\Omega}\setminus\overline{\Gamma_{cone}^{\infty}\cup\Gamma_{cone}^{\sigma}}$.

By using estimates \eqref{eq1:3.17} and \eqref{eq:3.42} again, we have $\psi\in Lip(\overline{\Omega_{ext}})\cap C^2(\Omega_{ext})$. Moreover, from \eqref{eq:3.3}, \eqref{eq1:3.21} and \eqref{eq1:3.17}, it follows that for any compact subdomain $\Omega^c_{sub}\subset\Omega_{ext}$, there exists a positive number $\varepsilon_0>0$ only depending on $\Omega^c_{sub}$ such that
\begin{equation*}
	\psi \geq 1+\varepsilon_0\quad\text{in $\Omega^c_{sub}$},
\end{equation*}
which implies that $\psi$ satisfies estimates \eqref{eq4}--\eqref{eq5} in  $\Omega^c_{sub}$.  Then equation \eqref{eq:2.41}  is  locally uniformly elliptic in  $\Omega_{ext}$. Therefore the uniqueness for problem \eqref{eq1:3.2} can be obtained by \Cref{remark:1}. Furthermore,  by using Theorem 6.17 in \cite{GT01} again, we have $\psi\in Lip(\overline{\Omega_{ext}})\cap C^\infty(\Omega_{ext})$.

Since only the convexity of $\Omega_{ext}$ is required in the proof above, we can obtain the same result for the case $\sigma=\sigma_{0}$ of shocks, as well as the case of rarefaction waves. We just omit the details here. Combining \Cref{Pro1} and \Cref{thm4}, we have proved \Cref{thm: Main}.

\section{Discussion of thin delta wings}\label{sec:4}
In this section, we deal with the problem of supersonic flow over a thin delta wing with diamond cross-sections.  Let $W_{\sigma}^{\theta}$ denote such a wing, in which the angle of apex is $\pi-2\sigma$ and the angle between the upper and lower root chords is $|2\theta|$, where $\sigma\in(0,\pi/2)$ and $\theta\in(-\pi/2,0)$. As in \Cref{sec:1}, we place $W_{\sigma}^{\theta}$ in the rectangular coordinates $(x_1,x_2,x_3)$ such that it is symmetrically about the $x_{1}Ox_{3}$-plane and $x_{2}Ox_{3}$-plane, with the apex at the origin(see \Cref{fg8}), namely,
\begin{equation}\label{eq4:12}
	W_{\sigma}^{\theta}=\{(x_1,x_2,x_3):|x_{2}|<x_{3}\cot\sigma, |x_{1}|<(x_2\tan\sigma-x_3)\tan\theta,x_{3}>0\}.
\end{equation}
Obviously, when $\theta=0$ it becomes a triangle plate, which is the case discussed in the previous sections. Moreover, we assume the oncoming flow as in \Cref{sec:1}, and still use the notations as defined in \Cref{sec:1,sec:2} except the boundary $\Gamma_{wing}$, which denotes the upper surface of $W_{\sigma}^{\theta}$ in this section.

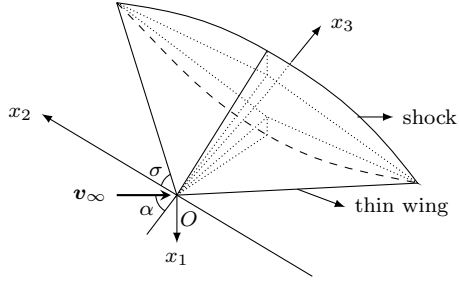
\begin{figure}[H]
	\centering
	\begin{tikzpicture}[smooth, scale=0.8]
	\draw  [-latex] (8.25,1.45)--(3.75,4.15) node [left] {\footnotesize$x_{2}$};
	\draw  (5,6)--(6,2.8)--(10,3);
	\draw  [densely dotted] (7.5,3.8)--(7.5,4.1);
	\draw  [densely dotted] (7.5,4.8)--(7.5,5.2);
	\draw  (7.5,5.2)--(6,2.8);
	\draw (5,6) to [out=-8,in=150]  (7.5,5.2) to [out=-30,in=127]  (10,3);
	\draw [dashed] (5,6) to [out=-45,in=150]  (7.5,3.8) to [out=-30,in=-192] (10,3);
	\draw  [-latex] (6,2.8)--(6,2) node [below] {\footnotesize$x_{1}$};
	\node at (6.2,2.4){\footnotesize$O$};
	\draw (6,2.8)--(5.5,2.145);
	\draw  [densely dotted]  (6,2.8)--(7.5,3.8);
	\draw  [densely dotted]  (6,2.8)--(7.5,4.5);
	\draw  [densely dotted]  (7.5,4.5)--(7.9,5.01);
	\draw  [-latex]  (7.9,5.01)--(8.4,5.625)node [right] {\footnotesize$x_{3}$};
	\draw [thick, -stealth] (5,2.8)  node[left, font=\footnotesize] {$\bm{v}_{\infty}$}  --(5.9,2.8) ;
	\draw (5.9,3.15)arc (116:165:0.3);
	\node at (5.65,3.2){\footnotesize$\sigma$};
	\draw (5.65,2.8)arc (180:240:0.3);
	\node at (5.5,2.55){\footnotesize$\alpha$};
	\draw [-latex](8,2.9)--(8.8,2.6) node[
	right] {\footnotesize{thin wing}};
	\draw [-latex](9,4.1)--(9.6,4.1) node[right] {\footnotesize{shock}};
	\draw [densely dotted] (5,6)--(7.5,4.8)--(10,3)--(7.5,4.1)--(5,6);
	\draw [densely dotted] (7.5,4.1)--(6,2.8)--(7.5,4.8);
	\end{tikzpicture}
	\caption{Geometry of a thin delta wing.}
	\label{fg8}
\end{figure}

Without loss of generality, we only discuss the case of attached shocks. Write
\begin{equation*}
	\mathcal{R}_{\sigma}^{\theta}:= \{s(x_{2},x_{3})<x_{1}<(x_3-x_2\tan\sigma)\tan\theta, x_{2}>0\},
\end{equation*}
where $x_1=s(x_{2},x_{3})$ is the equation for the shock attached to the leading edges. In view of the symmetry of the wing $W_{\sigma}^{\theta}$, it suffices to consider the problem in the region $\mathcal{R}_{\sigma}^{\theta}$. Clearly, the potential function $\Phi$ satisfies \eqref{eq:1.3}--\eqref{eq:1.4} in $\mathcal{R}_{\sigma}^{\theta}$ with boundary condition \eqref{eq1:1.2} and
\begin{equation}\label{eq1:41}
	\nabla_{\bm{x}}\Phi\cdot\bm{n}'_{w}=0 \quad\text{on $\{x_1=(x_3-x_2\tan\sigma)\tan\theta, x_2>0\}$},
\end{equation}
where $\bm{n}'_{w}=(1,\tan\theta\tan\sigma,-\tan\theta)$ is the exterior normal to the upper surface of $W_{\sigma}^{\theta}$.
By the continuity of $\Phi$, we also have \eqref{eq1:1.1} on $S_{\sigma}$. We point out here that equation \eqref{eq:1.3} is still hyperbolic in $\mathcal{R}_{\sigma}^{\theta}$, and thus there exists a Mach cone of the apex of wing.

Proceeding as in the analysis of \Cref{sec:2}, we can determine the location of the shock and the uniform flow state outside the Mach cone. The only difference is that at this point the oncoming flow  should satisfy the condition
\begin{equation}\label{eq:4.1}
	c_{\infty}<\tilde{q}_{\infty}<\frac{c_{\infty}}{\sin(\alpha_{n}-\theta_{n})}
\end{equation}
instead of \eqref{eq:2.2}, where $\theta_{n}=\arctan(\tan\theta/\cos\sigma)$ denotes the angle between the upper surface of $W_{\sigma}^{\theta}$ and the $x_{2}Ox_{3}$-plane. With the relations
\begin{equation*}
	\tilde{q}_{\infty}\sin\alpha_{n}={q}_{\infty}\sin\alpha, \quad \tilde{q}_{\infty}\cos\alpha_{n}={q}_{\infty}\cos\alpha\cos\sigma,
\end{equation*}
the right-hand side of \eqref{eq:4.1} can be reduced to
\begin{equation}\label{eq:4.2}
	q_{\infty}\sin(\alpha-\theta_{n})+q_{\infty}\cos\alpha\sin\theta_{n}(1-\cos\sigma)<c_{\infty}.
\end{equation}
Substituting $\sigma=0$ into \eqref{eq:4.2}, we have
\begin{equation*}\label{eq:4.3}
	\alpha<\arcsin\Big(\frac{c_{\infty}}{q_{\infty}}\Big)+\theta=\alpha_{0}+\theta.
\end{equation*}
This implies that our model is valid only for $\theta>-\alpha_{0}$.

Next, we focus on the flow state inside the Mach cone. Unlike that for the triangle plate, if the scaling transformation \eqref{eq:2.32} is carried out directly, then there is an oblique derivative condition on $\Gamma_{wing}$. Since the type of equation \eqref{eq:2.41} is a prior unknown, it is quite involved to establish the Lipschitz estimate for the solution in this case. To this end, we will first make a rotation of coordinates before performing a scaling transformation, as shown in \eqref{eq:4.4}--\eqref{eq:4.5}. Accordingly, the oblique derivative condition is reduced to a Neumann condition on $\Gamma_{wing}$. Later, we will develop our method to treat the problem \eqref{eq:3.13} with a Neumann condition on $\Gamma_{wing}\cup\Gamma_{sym}$ and a Dirichlet  condition on $\Gamma_{cone}^{\infty}\cup\Gamma_{cone}^{\sigma}$ in a Lipschitz domain. It is worth pointing out that the root chord of the wing $W_{\sigma}^{\theta}$ corresponds to a corner point in the $(\xi_1,\xi_2)$-plane (see \Cref{fg21}), which needs examining more carefully.

We consider a rotation transformation as follows
\begin{equation}\label{eq:4.4}
	(\tilde{x}_{1},\tilde{x}_{2},\tilde{x}_{3})=(x_{1}\cos\theta-x_{3}\sin\theta,x_{2},x_{1}\sin\theta+x_{3}\cos\theta).
\end{equation}
Then the corresponding scaling is given by
\begin{equation}\label{eq:4.5}
	(\tilde{x}_1,\tilde{x}_2,\tilde{x}_3)\longrightarrow(\tilde{\xi}_{1},\tilde{\xi}_{2}):=\Big(\frac{\tilde{x}_1}{\tilde{x}_3},\frac{\tilde{x}_2}{\tilde{x}_3}\Big), \quad
	(\rho,\Phi)\longrightarrow(\rho,\tilde{\phi}):=\Big(\rho, \frac{\Phi}{\tilde{x}_3}\Big),
\end{equation}
where $(\tilde{\xi}_{1},\tilde{\xi}_{2})$ are the new conical coordinates. Similarly, we introduce
\begin{equation*}
	\tilde{\psi}:= \frac{\tilde\phi}{\sqrt{B_\infty}},
\end{equation*}
where $B_\infty$ is defined by \eqref{eq1:1}. Since equation \eqref{eq:1.3} does not change under the rotation transformation \eqref{eq:4.4},  this equation can be expressed in terms of $\tilde\psi$ as
\begin{equation}\label{eq1:4.5}
	\mathrm{div}_{\tilde{\bm{\xi}}}(\rho(D_{\tilde{\bm{\xi}}}\tilde\psi-(\tilde\psi-D_{\tilde{\bm{\xi}}}\tilde\psi\cdot\tilde{\bm{\xi}})\tilde{\bm{\xi}}))+2\rho(\tilde\psi-D_{\tilde{\bm{\xi}}}\tilde\psi\cdot\tilde{\bm{\xi}})=0,
\end{equation}
where $\tilde{\bm{\xi}}:=(\tilde{\xi}_1,\tilde{\xi}_2)$, and $\mathrm{div}_{\tilde{\bm{\xi}}}$ and  $D_{\tilde{\bm{\xi}}}$ stand for the divergence and gradient operators with respect to $\tilde{\bm{\xi}}$, respectively.  Then the problem for $\tilde\psi$ is
\begin{equation}\label{eq:4.6}
	\begin{cases}
		\text{Equation}~\eqref{eq1:4.5}\quad&\text{in $\Omega$},\\
		\tilde\psi=\sqrt{1+|\tilde{\bm{\xi}}|^2}\quad&\text{on $\Gamma_{cone}^{\infty}\cup\Gamma_{cone}^{\sigma}$},\\
		D_{\tilde{\bm{\xi}}}{\tilde\psi}\cdot\tilde{\bm{\nu}}_{w}=0\quad&\text{on $\Gamma_{wing}$},\\
		D_{\tilde{\bm{\xi}}}{\tilde\psi}\cdot\tilde{\bm{\nu}}_{sy}=0\quad&\text{on $\Gamma_{sym}$},
	\end{cases}
\end{equation}
where $\tilde{\bm{\nu}}_{w}:=(1,\sin\theta\tan\sigma)$ and $\tilde{\bm{\nu}}_{sy}:=(0,-1)$ are the exterior normals to $\Gamma_{wing}$ and $\Gamma_{sym}$, respectively.

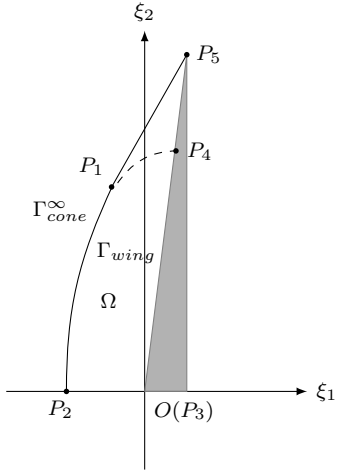
\begin{figure}[H]
	\centering
	\begin{tikzpicture}[smooth, scale=0.8]
	\draw  [-latex](1,2)--(6,2) node [right] {\footnotesize$\xi_{1}$};
	\draw  [-latex](3.3,0.7)--(3.3,8) node [above] {\footnotesize$\xi_{2}$};
	\draw  (2,2)   to [out=90, in =245] (2.78,5.45) node [above left] {\footnotesize$P_{1}$} ;
	\draw [dashed] (3.85, 6)node [right] {\footnotesize$P_{4}$} to [out=180, in =57] (2.82,5.42) ;
	\node  at (1.9,2) [below ] {\footnotesize$P_{2}$};
	\node  at (2.7,3.5) {\footnotesize$\Omega$};
	\node  at (3,4.3) {\footnotesize$\Gamma_{wing}$};
	\node  at (1.9,5) {\footnotesize$\Gamma_{cone}^{\infty}$};
	\draw (2.78,5.45)--(4,7.6) node  [right]  {\footnotesize$P_{5}$};
	\draw [draw=gray, fill=gray, fill opacity=0.6](3.3,2)--(4,7.6)--(4,2)--(3.3,2);
	\node at(3.3,2) [below right] {\footnotesize$O(P_{3})$};
	\fill(4,7.6)circle(1.3pt);
	\fill(3.82, 6)circle(1.3pt);
	\fill(2.75,5.40)circle(1.3pt);
	\fill(2,2)circle(1.3pt);
	\end{tikzpicture}
	\caption{Domain in the new conical coordinates.}
	\label{fg21}
\end{figure}

Hereafter, we will work in these new coordinates and drop $"\sim"$ for simplification.

We also use the strategy mentioned in \Cref{sec:3.2.1} to prove the existence of the solution to problem \eqref{eq:4.6}. Let us consider the following boundary value problem:
\begin{equation}\label{eq:4.7}
	\begin{cases}
		\text{Equation}~\eqref{eq:3.13}\quad&\text{in $\Omega$},\\
		\psi=\sqrt{1+|\bm{\xi}|^2}+\varepsilon\quad&\text{on $\Gamma_{cone}^{\infty}\cup\Gamma_{cone}^{\sigma}$},\\
		D\psi\cdot\bm{\nu}_{w}=0\quad&\text{on $\Gamma_{wing}$},\\
		D\psi\cdot\bm{\nu}_{sy}=0\quad&\text{on $\Gamma_{sym}$}.
	\end{cases}
\end{equation}
Still, denote by ${\psi}_{\mu,\varepsilon}$ a solution to problem \eqref{eq:4.7}. In addition, for any open bounded domain $\hat{\Omega}\subset\mathbb{R}^{2}$, we introduce the operator
\begin{equation*}
	\mathcal{M}w:=Dw\cdot\hat{\bm{\nu}}+b w, \quad\text{in}~\partial\hat{\Omega},
\end{equation*}
where $\hat{\bm{\nu}}$ is the exterior normal to $\partial\hat{\Omega}$, and $b\geq 0$ is a function of $\bm{\xi}$.  As before, we first show a comparison principle.

\begin{lemma}\label{lemma5}
	Let $\Omega_{D}\subset \mathbb{R}^{2}$ be an open bounded domain, with its boundary composed of $\partial^1\Omega_{D}$ and $\partial^2\Omega_{D}$, and let $\bm{\nu}$ be the exterior normal to $\partial^2\Omega_{D}$. Also, let ${w}_{\pm}\in C^{0}(\overline{\Omega_{D}})\cap C^{1}(\overline{\Omega_{D}}\setminus\overline{\partial^1\Omega_{D}})\cap C^2(\Omega_{D})$ satisfy ${w}_{\pm}>1$ in $\Omega_{D}$. Assume that, for any $\mu\in[0,1]$, the operator $\mathcal{N}_\mu$ is locally uniformly elliptic with respect to either ${w}_{+}$ or $w_-$, and there hold
	\begin{equation*}
		\mathcal{N}_\mu w_-\geq0, \quad\mathcal{N}_\mu w_+\leq0\quad\text{in}~\Omega_{D}
	\end{equation*}
	with $w_-\leq w_+$ on $\partial^1\Omega_{D}$ and $\mathcal{M}w_-< \mathcal{M}w_+$ on $\partial^2\Omega_{D}$, where $\mathcal{N}_\mu w=0$ denotes equation \eqref{eq:3.20}. Then, it follows that ${w}_{-}\leq{w}_{+}$ in $\Omega_{D}$.
\end{lemma}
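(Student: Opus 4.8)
The plan is to reproduce the scheme of \Cref{lemma3} and then graft onto it a boundary treatment for the oblique part $\partial^2\Omega_D$. First I would pass to the auxiliary variable $z$ defined by $w=\cosh z$; this is legitimate because $w_\pm>1$ forces $z_\pm>0$, and it turns \eqref{eq:3.20} into \eqref{eq:3.16}, i.e.\ the quasilinear equation \eqref{eq1:3.1} whose principal coefficients $a_{ij}(\mu;\bm{\xi},Dz)$ are independent of $z$ and whose zeroth-order part $H(\mu;\bm{\xi},z,Dz)$ is non-increasing in $z$. This reduction is the essential one: the comparison principle we invoke (Theorem 10.1 in \cite{GT01}) requires the principal part to be independent of the solution value, which fails for \eqref{eq:3.20} but holds after the substitution. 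Since $\cosh$ is strictly increasing on $(0,+\infty)$, the claim $w_-\le w_+$ is equivalent to $z_-\le z_+$, and all hypotheses transfer: $z_-$ is a subsolution and $z_+$ a supersolution of \eqref{eq1:3.1}, with $z_-\le z_+$ on $\partial^1\Omega_D$. I would then argue by contradiction, supposing $M:=\max_{\overline{\Omega_D}}(z_--z_+)>0$ is attained at some $\bar{\bm{\xi}}$.

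The point $\bar{\bm{\xi}}$ cannot lie on $\partial^1\Omega_D$, where $z_-\le z_+$. To exclude the interior I would set $v:=z_--z_+$ and subtract the two differential inequalities; the mean value theorem produces a linear operator $\mathcal{L}v=\sum_{i,j}\bar{a}_{ij}\partial_{ij}v+\sum_i\bar{b}_i\partial_i v+\bar{c}\,v\ge0$ in $\Omega_D$, whose principal part is elliptic (inherited from the local uniform ellipticity of $\mathcal{N}_\mu$ with respect to $w_+$ or $w_-$) and whose zeroth-order coefficient satisfies $\bar{c}\le0$ because $H$ is non-increasing in $z$. A positive interior maximum would then force $v\equiv M>0$ on the connected component by the strong maximum principle, contradicting $v\le0$ on $\partial^1\Omega_D$. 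Hence necessarily $\bar{\bm{\xi}}\in\partial^2\Omega_D$.

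The case $\bar{\bm{\xi}}\in\partial^2\Omega_D$ is the genuinely new ingredient, and I expect it to be the main obstacle. Since $v<M$ in $\Omega_D$ while $v(\bar{\bm{\xi}})=M$, the Hopf boundary point lemma applies to $\mathcal{L}v\ge0$ and gives the strict inequality $\partial_{\bm{\nu}}v(\bar{\bm{\xi}})>0$, that is $\partial_{\bm{\nu}}z_-(\bar{\bm{\xi}})>\partial_{\bm{\nu}}z_+(\bar{\bm{\xi}})$. It remains to contradict the hypothesis $\mathcal{M}w_-<\mathcal{M}w_+$ on $\partial^2\Omega_D$. The delicacy is that this condition is posed for $w$ while the derivative information from Hopf is for $z$: inserting $w=\cosh z$ with $\hat{\bm{\nu}}=\bm{\nu}$ turns it into
\[
\sinh z_-\,\partial_{\bm{\nu}}z_-+b\cosh z_-<\sinh z_+\,\partial_{\bm{\nu}}z_++b\cosh z_+,
\]
and the unequal $\sinh$ weights in front of the normal derivatives prevent a purely mechanical substitution unless the sign of $\partial_{\bm{\nu}}z_+(\bar{\bm{\xi}})$ is controlled; this reconciliation is the technical heart of the proof.

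The clean way I would close this step is to run the boundary argument directly in the variable $w$, where the operator $\mathcal{M}w=Dw\cdot\hat{\bm{\nu}}+bw$ carries the favorable sign $b\ge0$ relative to the exterior normal. At a maximizing point of $w_--w_+$ the maximum property alone yields $\partial_{\bm{\nu}}(w_--w_+)(\bar{\bm{\xi}})\ge0$, whereas $\mathcal{M}w_-<\mathcal{M}w_+$ together with $b\ge0$ and $w_-(\bar{\bm{\xi}})>w_+(\bar{\bm{\xi}})$ forces $\partial_{\bm{\nu}}(w_--w_+)(\bar{\bm{\xi}})<-b\,(w_--w_+)(\bar{\bm{\xi}})\le0$, the desired contradiction; crucially, no interior-coefficient issue enters here because $\mathcal{M}$ and the maximum are taken in the same variable. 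The strictness of the hypothesis $\mathcal{M}w_-<\mathcal{M}w_+$ is exactly what disposes of the borderline configuration in which $w_--w_+$ is locally constant near $\bar{\bm{\xi}}$ (so that the Hopf derivative degenerates). The proof therefore hinges on pairing the two viewpoints correctly: the $z$-equation, with its value-independent principal part and monotone lower-order term, excludes an interior maximum, while the $w$-formulation, where $b\ge0$ encodes the proper monotonicity of the oblique condition, closes the boundary case. Having excluded $\bar{\bm{\xi}}$ from $\partial^1\Omega_D$, from $\Omega_D$, and from $\partial^2\Omega_D$, I conclude $z_-\le z_+$, hence $w_-\le w_+$ in $\Omega_D$.
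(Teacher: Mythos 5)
Your proposal is correct and follows essentially the same route as the paper's own proof: the interior is disposed of by the comparison principle of \Cref{lemma3} via the substitution $w=\cosh z$, and a positive maximum of $w_--w_+$ on $\partial^2\Omega_{D}$ is ruled out by pairing $D(w_--w_+)\cdot\bm{\nu}\geq 0$ at such a point with the strict inequality $\mathcal{M}w_-<\mathcal{M}w_+$ and $b\geq 0$, which is exactly the paper's two-line boundary argument. You are in fact more explicit than the paper about the one delicate point --- that the interior maximum principle lives in the $z$-variable while the oblique condition must be exploited in the $w$-variable --- which the paper compresses into the bare assertion $\sup_{\Omega_{D}}(w_--w_+)\leq\sup_{\partial\Omega_{D}}(w_--w_+)$.
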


\begin{proof}
	Setting
	\begin{equation*}
		\bar{w}:=w_- -w_+,
	\end{equation*}
	then by \Cref{lemma3} and the analysis in \Cref{sec:3.2.1}, we have
	\begin{equation*}
		\sup_{\Omega_D}\bar{w}\leq\sup_{\partial\Omega_D}\bar{w}.
	\end{equation*}
	We may as well assume that $\bar{w}$ could achieve its maximum at a point $P$ on $\partial^2\Omega_{D}$. Then we have $D\bar{w}\cdot\bm{\nu}|_{P}\geq 0$, which means $\bar{w}|_{P}<0$ due to $b\geq 0$. This leads to a contradiction. Hence we have $\bar{w}\leq 0$ in $\Omega_{D}$. The proof is completed.
\end{proof}

Now we are in a position to derive a prior estimates for ${\psi}_{\mu,\varepsilon}$. Similarly, we apply the corresponding function  $w_{\mu,\varepsilon}$. From \eqref{eq:3.3} and \eqref{eq:4.7}, we obtain the equation for $w_{\mu,\varepsilon}$ as follows
\begin{equation}\label{eq:4.16}
	\begin{cases}
		\mathcal{N}_\mu w=0\quad&\text{in $\Omega$},\\
		w=1+\varepsilon\quad&\text{on $\Gamma_{cone}^{\infty}\cup\Gamma_{cone}^{\sigma}$},\\
		Dw\cdot\bm{\nu}_{w}=0\quad&\text{on $\Gamma_{wing}$},\\
		Dw\cdot\bm{\nu}_{sy}=0\quad&\text{on $\Gamma_{sym}$}.
	\end{cases}
\end{equation}

Before proceeding, let us define two domains
\begin{align}
	\Lambda_1:&=\{\arctan(\cot\sigma/\tan\theta)<\vartheta<\pi\},\label{eq4:14}\\
	\Lambda_2:&=\{\arctan(\cot\sigma/\tan\theta)+\pi<\vartheta<2\pi\},\label{eq4:15}
\end{align}
where $\vartheta=\arctan(\xi_2/\xi_1)\in [0,2\pi)$. For simplicity, the subscripts of $\psi_{\mu,\varepsilon}$ and $w_{\mu,\varepsilon}$ will be omitted in the proof of \Cref{lemma:inf-lip estimates,lemma: lip} below.

\begin{lemma}\label{lemma:inf-lip estimates}
	Let ${w}\in C^{0}(\bar{\Omega})\cap C^{1}(\bar{\Omega}\setminus\overline{\Gamma_{cone}^{\infty}\cup\Gamma_{cone}^{\sigma}})\cap C^2(\Omega\cup\Gamma_{sym}\cup\Gamma_{wing})$ be a solution of problem \eqref{eq:4.16} with $w> 1$ in $\Omega$. Then there exists a constant $C$, independent of $\mu$ and $\varepsilon$, such that
	\begin{equation*}\label{eq:4.17}
		1+\varepsilon<{w}\leq C\quad\text{in $\bar{\Omega}\setminus\overline{\Gamma_{cone}^{\infty}\cup\Gamma_{cone}^{\sigma}}$}.
	\end{equation*}
\end{lemma}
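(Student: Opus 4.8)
The plan is to transcribe the scheme of \Cref{lemma: inf-estimate}, with the comparison principle \Cref{lemma3} replaced by its mixed-boundary counterpart \Cref{lemma5}, and to build the barriers from the same explicit family of exact solutions $w^{\bm{\eta}}$ in \eqref{eq:3.26}, whose derivatives are recorded in \eqref{eq1:3.26}. Each $w^{\bm{\eta}}$ solves $\mathcal{N}_\mu w^{\bm{\eta}}=0$ for every $\mu\in[0,1]$, so the only genuinely new ingredient is that on the Neumann part $\partial^2\Omega:=\Gamma_{wing}\cup\Gamma_{sym}$ I can no longer force a barrier to match Dirichlet data; instead I must control the sign of its conormal derivative $D w^{\bm{\eta}}\cdot\bm{\nu}$, while matching data only on the Dirichlet part $\partial^1\Omega:=\Gamma_{cone}^{\infty}\cup\Gamma_{cone}^{\sigma}$.

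For the lower bound I would reuse the sub-solution construction, defining $\Sigma_-$ as in \eqref{eq:3.2} but with the boundary inequality imposed only on $\partial^1\Omega$, together with the two conormal requirements $D w^{\bm{\eta}}\cdot\bm{\nu}_{w}<0$ on $\Gamma_{wing}$ and $D w^{\bm{\eta}}\cdot\bm{\nu}_{sy}<0$ on $\Gamma_{sym}$. Using \eqref{eq1:3.26} with $\bm{\nu}_{w}=(1,\sin\theta\tan\sigma)$ and $\bm{\nu}_{sy}=(0,-1)$, these become explicit linear inequalities in the components of $\bm{\eta}$, and one checks they hold for the radial vectors $\bm{\eta}_0$ used in \eqref{eq1:3.21} (whose second component is nonnegative on $\Omega$, giving the correct sign on $\Gamma_{sym}$). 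Setting $w^-=\sup_{\bm{\eta}\in\Sigma_-}w^{\bm{\eta}}$ as in \eqref{eq:3.30}, estimate \eqref{eq1:3.21} yields $w^->1+\varepsilon$ on compact subdomains, while $w^-\le 1+\varepsilon$ on $\partial^1\Omega$ and $\mathcal{M}w^-<\mathcal{M}w=0$ on $\partial^2\Omega$; \Cref{lemma5} then gives $w\ge w^->1+\varepsilon$ in $\Omega$. By \Cref{remmarklbs6} this half never used convexity, so it survives the passage to the Lipschitz domain unchanged.

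For the upper bound I would take $\Sigma_+$ to consist of the $\bm{\eta}$ with $w^{\bm{\eta}}\ge 1+\varepsilon$ on $\partial^1\Omega$ together with the reversed conormal inequalities $D w^{\bm{\eta}}\cdot\bm{\nu}_{w}>0$ on $\Gamma_{wing}$ and $D w^{\bm{\eta}}\cdot\bm{\nu}_{sy}>0$ on $\Gamma_{sym}$, and set $w^+=\inf_{\bm{\eta}\in\Sigma_+}w^{\bm{\eta}}$ as in \eqref{eq:3.28}. Then $w^+$ is a supersolution, dominates the data on $\partial^1\Omega$, and is bounded above by a constant $C$ independent of $\mu$ and $\varepsilon$, since at least one admissible $\bm{\eta}$ can be exhibited uniformly in $\varepsilon$. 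Here I only need $w^+\ge 1+\varepsilon$ on $\partial^1\Omega$, not the equality \eqref{eq:3.29}, so the failure of convexity is not by itself an obstruction. \Cref{lemma5} applied with $w_-=w$ and $w_+=w^+$ then gives $w\le w^+\le C$. The strict inequality that \Cref{lemma5} demands on $\partial^2\Omega$ is secured by imposing strict conormal inequalities in $\Sigma_\pm$ and, should the infimum or supremum erode strictness, by an $O(\delta)$ perturbation of the admissible $\bm{\eta}$.

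I expect the main obstacle to be the corner point $P_3$ produced by the root chord, where $\Gamma_{wing}$ and $\Gamma_{sym}$ meet and $\Omega$ is merely Lipschitz and possibly non-convex (see \Cref{fg21}). Two things must be verified there. First, that the admissible cones $\Sigma_\pm$ remain nonempty once both conormal constraints are imposed simultaneously for the tilted normal $\bm{\nu}_{w}=(1,\sin\theta\tan\sigma)$ with $\theta\in(-\pi/2,0)$ and for $\bm{\nu}_{sy}$; via \eqref{eq1:3.26} this reduces to a sign-compatibility condition on the components of $\bm{\eta}$, which I expect to hold precisely because of the admissibility restriction $\theta>-\alpha_0$ obtained above. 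Second, that the boundary-point (Hopf-type) step inside the proof of \Cref{lemma5} still applies at a corner of the Neumann portion; this is exactly where the assumed regularity $w\in C^1(\bar{\Omega}\setminus\overline{\Gamma_{cone}^{\infty}\cup\Gamma_{cone}^{\sigma}})\cap C^2(\Omega\cup\Gamma_{sym}\cup\Gamma_{wing})$ up to $\Gamma_{wing}\cup\Gamma_{sym}$ is needed.
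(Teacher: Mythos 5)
Your overall route is the paper's: the same explicit barriers $w^{\bm{\eta}}$ from \eqref{eq:3.26}, the mixed comparison principle of \Cref{lemma5}, and the observation via \eqref{eq1:3.26} that the sign of $Dw^{\bm{\eta}}\cdot\bm{\nu}$ on $\Gamma_{wing}\cup\Gamma_{sym}$ is governed by which side of the line $P_{3}P_{5}$ the point $O^{\tilde{\bm{\eta}}}$ lies on (the paper packages this as the cones $\Lambda_1,\Lambda_2$ in \eqref{eq4:14}--\eqref{eq4:15}). The upper bound is fine. The genuine gap is in the lower bound: your conclusion is $w\ge w^->1+\varepsilon$ \emph{in $\Omega$}, but the lemma asserts the strict inequality on $\bar{\Omega}\setminus\overline{\Gamma_{cone}^{\infty}\cup\Gamma_{cone}^{\sigma}}$, i.e.\ also at interior points of $\Gamma_{wing}$ and $\Gamma_{sym}$ and at the corner $P_3$ --- and this is exactly what distinguishes the present lemma from \Cref{lemma: inf-estimate}, since it is what rules out parabolic degeneracy on the Neumann boundary after $\varepsilon\rightarrow 0+$. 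Your verification that the radial vectors $\bm{\eta}_{0}$ of \eqref{eq1:3.21} are admissible breaks down precisely there: at a point of $\Gamma_{sym}$ the second component of $\tilde{\bm{\eta}}_{0}$ is zero, not positive, so $Dw^{\bm{\eta}_0}\cdot\bm{\nu}_{sy}=0$ rather than $<0$; similarly $\tilde{\bm{\eta}}_{0}\cdot\bm{\nu}_{w}=0$ on $\Gamma_{wing}$, and at $P_3$ both conormal derivatives vanish. Hence those $\bm{\eta}_0$ do not belong to your $\Sigma_-$, and the supremum $w^-$ is not shown to exceed $1+\varepsilon$ at these boundary points. The paper repairs this with the separate family $\widetilde{\Sigma}_-^{\theta}$: one tilts the direction $O^{\tilde{\bm{\eta}}}$ strictly into $\Lambda_1$ while keeping $w^{\bm{\eta}}$ larger than $1+\varepsilon+\delta$ at the targeted boundary point and smaller than $1+\varepsilon$ on $\Gamma_{cone}^{\infty}\cup\Gamma_{cone}^{\sigma}$, which yields $\tilde{w}^-\ge 1+\varepsilon+\delta$ at $P_3$ and at interior points of $\Gamma_{wing}\cup\Gamma_{sym}$ with $\delta$ independent of $\mu$ and $\varepsilon$.

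Two smaller points. First, once the Dirichlet constraint is imposed only on $\Gamma_{cone}^{\infty}\cup\Gamma_{cone}^{\sigma}$, a barrier $w^{\bm{\eta}}$ may fall below $1$ somewhere in $\Omega$, and \Cref{lemma5} requires $w_{\pm}>1$; the paper therefore applies the comparison only on the subdomain where $w^{\bm{\eta}}>1$, on whose relative boundary inside $\Omega$ one has $w>1=w^{\bm{\eta}}$ anyway. Second, it is safer to compare $w$ with each smooth $w^{\bm{\eta}}$ individually and then take the supremum of the conclusions, rather than to assert $\mathcal{M}w^-<\mathcal{M}w$ for the supremum $w^-$ itself, which need not be differentiable.
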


\begin{proof}
	We first analyze the function ${w}^{\bm{\eta}}$, which is an exact solution to the equation $\mathcal{N}_\mu w=0$. From \eqref{eq1:3.26}, we have
	\begin{equation}\label{eq4:1}
		D{w}^{\bm{\eta}}(\bm{\xi})\cdot \bm{\nu}_{w} =\frac{\tilde{\bm{\eta}}\cdot \bm{\nu}_{w}}{\sqrt{1+|\bm{\xi}|^{2}}},\quad D{w}^{\bm{\eta}}(\bm{\xi})\cdot \bm{\nu}_{sy} =\frac{\tilde{\bm{\eta}}\cdot \bm{\nu}_{sy}}{\sqrt{1+|\bm{\xi}|^{2}}},
	\end{equation}
	where $\bm{\eta}=(\tilde{\bm{\eta}},\eta_3)=(\eta_1,\eta_2,\eta_3)$. Let $O^{\tilde{\bm{\eta}}}=(\xi_1,\xi_2)$ denote the corresponding coordinates of $\tilde{\bm{\eta}}$ with $\eta_3=1$. Note that $\arctan(\cot\sigma/\tan\theta)$ is the included angle of the line $P_{3}P_{5}$ and the $\xi_{1}$-axis (see \Cref{fg21}). Then we infer from \eqref{eq4:1} that when $O^{\tilde{\bm{\eta}}}\in \Lambda_1$, both $Dw^{\bm{\eta}}\cdot\bm{\nu}_{w}$ on $\Gamma_{wing}$ and $Dw^{\bm{\eta}}\cdot\bm{\nu}_{sy}$ on $\Gamma_{sym}$ are negative; when $O^{\tilde{\bm{\eta}}}\in \Lambda_2$, however, both of them are positive.
	
	Next, we construct the sub- and super-solutions to problem \eqref{eq:4.16}. For the super-solution, let us consider the set with a fixed $\varepsilon>0$:
	\begin{equation}\label{eq:4.8}
		\Sigma_{+}^{\theta}:=\{\bm{\eta}\in\mathbb{R}^{3}: O^{\tilde{\bm{\eta}}}\in \Lambda_2, \text{ and }{w}^{\bm{\eta}}>1+\varepsilon\text{ on }\Gamma_{cone}^{\infty}\cup\Gamma_{cone}^{\sigma}\}.
	\end{equation}
	When $\bm{\eta}\in \Sigma_{+}^{\theta}$, the function ${w}^{\bm{\eta}}$    satisfies
	\begin{equation*}\label{eq1:4.16}
		\begin{cases}
			\mathcal{N}_\mu w^{\bm{\eta}}= 0\quad&\text{in $\Omega$},\\
			w^{\bm{\eta}}>1+\varepsilon\quad&\text{on $\Gamma_{cone}^{\infty}\cup\Gamma_{cone}^{\sigma}$},\\
			Dw^{\bm{\eta}}\cdot\bm{\nu}_{w}>0\quad&\text{on $\Gamma_{wing}$},\\
			Dw^{\bm{\eta}}\cdot\bm{\nu}_{sy}>0\quad&\text{on $\Gamma_{sym}$}.
		\end{cases}
	\end{equation*}
	Recall that for any fixed constant vector $\bm{\eta}\in \mathbb{R}^{3}$, the function $w^{\bm{\eta}}$ decreases with respect to the angle between $\frac{(\xi_{1},\xi_{2},1)}{\sqrt{1+|\bm{\xi}|^{2}}}$ and $\bm{\eta}$. Then from \eqref{eq:3.26}--\eqref{eq1:3.26} and \eqref{eq:4.8}, it follows that the operator $\mathcal{N}_\mu $ is locally uniformly elliptic with respect to $w^{\bm{\eta}}$. Thanks to \Cref{lemma5}, we have $w\leq w^{\bm{\eta}}$ in $\Omega$. Let $w^+$ be the infimum of all these $w^{\bm{\eta}}$; then $w\leq w^+$ in $\Omega$.
	Moreover, owing to the convexity of  $\Gamma_{cone}^{\infty}\cup\Gamma_{cone}^{\sigma}$, we know that $w^+=1+\varepsilon$ on this boundary.
	
	For the sub-solution, we consider the set with a fixed $\varepsilon>0$:
	\begin{equation}\label{eq:4.9}
		\Sigma_{-}^{\theta}:=\{\bm{\eta}\in\mathbb{R}^{3}: {w}^{\bm{\eta}}<1+\varepsilon\text{ on }\partial\Omega\},
	\end{equation}
	and define
	\begin{equation*}
		{w}^{-}:=\sup\{{w}^{\bm{\eta}}:\bm{\eta}\in \Sigma_{-}^{\theta}\}.
	\end{equation*}
	From \eqref{eq:3.2} and \eqref{eq:4.9}, we find that the set $\Sigma_{-}^{\theta}$ is the same as the set $\Sigma_{-}$ by replacing the boundary $\partial\Omega$ by $\partial\Omega_{ext}$. Then proceeding as in the argument of \Cref{lemma: inf-estimate}, we have $w^->1+\varepsilon$ in $\Omega$ and $w^-=1+\varepsilon$ on $\partial\Omega$. Moreover, for any compact subdomain $\Omega^c_{sub}\subset\Omega$, we have
	\begin{equation}\label{eq1:12}
		{w}^{-}> 1+\varepsilon+\delta\quad\text{in $\Omega^c_{sub}$}
	\end{equation}
	with the constant $\delta$ only depending on $\Omega^c_{sub}$. This means that the operator $\mathcal{N}_\mu$ is locally uniformly elliptic with respect to $w^-$. Notice that the function $w^-$ is less than or equal to $1+\varepsilon$ outside $\Omega$. Then, there hold $Dw^-\cdot\bm{\nu}_{w}<0$ on $\Gamma_{wing}$ and $Dw^-\cdot\bm{\nu}_{sy}<0$ on $\Gamma_{sym}$. From \Cref{lemma5}, we know
	\begin{equation}\label{eq1:11}
		{w}\geq {w}^{-}> 1+\varepsilon\quad\text{in $\Omega$}.
	\end{equation}
	
	For the point $P_3$ and the interior points of $\Gamma_{wing}$ and $\Gamma_{sym}$, we only have the estimate ${w}\geq w^-=1+\varepsilon$. When $\varepsilon=0$, this estimate cannot rule out the existence of parabolic bubbles on the boundaries $\Gamma_{wing}$ and $\Gamma_{sym}$ or at the point $P_3$.  Hence we need to improve the estimate for these points. To this end, let us consider the set with a fixed $\varepsilon>0$:
	\begin{equation*}
		\widetilde{\Sigma}_{-}^{\theta}:=\{\bm{\eta}\in\mathbb{R}^{3}: O^{\tilde{\bm{\eta}}}\in \Lambda_1, \text{ and } {w}^{\bm{\eta}}<1+\varepsilon\text{ on }\Gamma_{cone}^{\infty}\cup\Gamma_{cone}^{\sigma}\}.
	\end{equation*}
	Obviously, when $\bm{\eta}\in \widetilde{\Sigma}_{-}^{\theta}$, the function ${w}^{\bm{\eta}}$ satisfies
	\begin{equation*}
		\begin{cases}
			\mathcal{N}_\mu w^{\bm{\eta}}= 0\quad&\text{in $\Omega$},\\
			w^{\bm{\eta}}<1+\varepsilon\quad&\text{on $\Gamma_{cone}^{\infty}\cup\Gamma_{cone}^{\sigma}$},\\
			Dw^{\bm{\eta}}\cdot\bm{\nu}_{w}<0\quad&\text{on $\Gamma_{wing}$},\\
			Dw^{\bm{\eta}}\cdot\bm{\nu}_{sy}<0\quad&\text{on $\Gamma_{sym}$}.
		\end{cases}
	\end{equation*}
	It follows from \eqref{eq1:12} and \eqref{eq1:11} that the operator $\mathcal{N}_\mu $ is locally uniformly elliptic with respect to $w$. In addition, if $\bm{\eta}\in \widetilde{\Sigma}_{-}^{\theta}$, then $w^{\bm{\eta}}$ is a sub-solution to problem \eqref{eq:4.16} in the subdomain $\Omega_{sub}$ where $w^{\bm{\eta}}$ is larger than $1$. By \Cref{lemma3} or \Cref{lemma5}, we have the estimate $w\geq w^{\bm{\eta}}$ in the subdomain $\Omega_{sub}$. Define $\tilde{w}^-$ as the supermum of all these $w^{\bm{\eta}}$; then $w\geq \tilde{w}^-$ in $\Omega$. Using the argument as in the proof of \eqref{eq1:3.21}, we can find a constant $\delta>0$, independent of $\mu$ and $\varepsilon$, such that $\tilde{w}^-\geq 1+\varepsilon+\delta$ at the point $P_3$ and the interior point of $\Gamma_{wing}$ and $\Gamma_{sym}$. Thus, from the discussion above, we have
	\begin{align}
		1+\varepsilon<\tilde{w}^-\leq w\leq w^+\quad&\text{in}~  \bar{\Omega}\setminus\overline{\Gamma_{cone}^{\infty}\cup\Gamma_{cone}^{\sigma}}\label{eq1:4.19}
		\shortintertext{and}
		w=w^\pm=1+\varepsilon\quad&\text{on}~ \Gamma_{cone}^{\infty}\cup\Gamma_{cone}^{\sigma}.\label{eq1:4.20}
	\end{align}
	This complete the proof.
\end{proof}

From estimates \eqref{eq1:4.19} and \eqref{eq1:4.20}, it follows that
\begin{align}
	\sqrt{1+|\bm{\xi}|^2}+\varepsilon<{\psi}\leq C\quad&\text{in}~\bar{\Omega}\setminus\overline{\Gamma_{cone}^{\infty}\cup\Gamma_{cone}^{\sigma}}\label{eq1:4.17}\\
	\shortintertext{and}
	\psi=\psi^\pm\quad&\text{on}~\Gamma_{cone}^{\infty}\cup\Gamma_{cone}^{\sigma},\label{eq1:4.18}
\end{align}
where $\psi^\pm=\sqrt{1+|\bm{\xi}|^2}w^\pm$, and $C$ is a constant independent of $\mu$ and $\varepsilon$.

\begin{lemma}\label{lemma: lip}
	Let $\psi \in C^{0}(\bar{\Omega})\cap C^{1}(\bar{\Omega}\setminus\overline{\Gamma_{cone}^{\infty}\cup\Gamma_{cone}^{\sigma}})\cap C^2(\Omega\cup\Gamma_{sym}\cup\Gamma_{wing})$ satisfy problem \eqref{eq:4.7} with $\psi> \sqrt{1+|\bm{\xi}|^2}$ in $\Omega$. Then there exists a constant $C>0$, independent of $\mu$ and $\varepsilon$, such that
	\begin{equation}\label{eq1:4}
		\|D{\psi}\|_{L^{\infty}(\Omega)}\leq C.
	\end{equation}
\end{lemma}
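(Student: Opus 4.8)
The plan is to reproduce the architecture of the proof of \Cref{lemma: lip-estimate}, reducing the estimate to a maximum bound for $v:=\frac12|D\psi|^2$, but now keeping track of the two Neumann pieces and of the corner $P_3$. After the same $C^3$-approximation used there, I would first record the interior computation \eqref{eq:3.37}--\eqref{eq:3.40}: because the hypothesis $\psi>\sqrt{1+|\bm{\xi}|^2}$ makes the matrix $\bm{A}(\mu;\bm{\xi},\psi,D\psi)$ positive definite (the ellipticity already exploited in \Cref{lemma:inf-lip estimates}), the identity \eqref{eq:3.39} shows that $v$ is a subsolution,
\begin{equation*}
	\bm{A}:D^2 v-\sum_{k=1}^2 h_k(\bm{\xi},\psi,D\psi)\,\partial_k v\ge 0\quad\text{in }\Omega,
\end{equation*}
with $h_k$ given by \eqref{eq1:3.40}. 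The strong maximum principle, together with the rotation device of \Cref{lemma: lip-estimate} for points where one of $\partial_1\psi,\partial_2\psi$ vanishes, then rules out an interior maximum of $v$ unless $v$ is constant.

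The genuinely new ingredient is that $v$ also satisfies a homogeneous Neumann condition on the flat boundaries $\Gamma_{wing}$ and $\Gamma_{sym}$, where $\psi\in C^2$ by hypothesis. On either straight piece the unit normal $\bm{\nu}$ and tangent $\bm{\tau}$ are constant, so differentiating $D\psi\cdot\bm{\nu}=0$ along $\bm{\tau}$ gives $D^2\psi[\bm{\tau},\bm{\nu}]=0$; since $D\psi\cdot\bm{\nu}=0$ forces $D\psi=(D\psi\cdot\bm{\tau})\bm{\tau}$, I obtain
\begin{equation*}
	Dv\cdot\bm{\nu}=D^2\psi[D\psi,\bm{\nu}]=(D\psi\cdot\bm{\tau})\,D^2\psi[\bm{\tau},\bm{\nu}]=0\quad\text{on }\Gamma_{wing}\cup\Gamma_{sym}.
\end{equation*}
Applying Hopf's lemma to the subsolution $v$ (each flat piece satisfies an interior sphere condition at its relative-interior points) then excludes a maximum of $v$ over $\bar{\Omega}$ at any relative-interior point of $\Gamma_{wing}$ or $\Gamma_{sym}$. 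On the Dirichlet arcs $\Gamma_{cone}^{\infty}\cup\Gamma_{cone}^{\sigma}$ I would control $|D\psi|$ exactly as in \Cref{lemma: lip-estimate}: by \eqref{eq1:4.17}--\eqref{eq1:4.18} one has $\psi=\psi^\pm$ there, so the tangential derivative of $\psi$ equals that of the smooth data $\sqrt{1+|\bm{\xi}|^2}+\varepsilon$ while the normal derivative is squeezed between those of the barriers $\psi^\pm$, which are uniformly bounded by the explicit formula \eqref{eq1:3.26}.

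Combining these, the maximum of $v$ over $\bar{\Omega}$ must be attained either on the Dirichlet arcs, where it is now controlled, or at a corner. I expect the Neumann--Neumann corner $P_3$ (the image of the root chord), where $\Gamma_{wing}$ meets $\Gamma_{sym}$ and where only $C^1$ regularity is available, to be the decisive case, since neither the interior argument nor Hopf's lemma applies there. To handle it I would first use that equation \eqref{eq:3.13} is invariant under $\xi_2\mapsto-\xi_2$ (the reflection symmetry noted at the start of \Cref{sec:3}), so that $\psi$ extends evenly across $\Gamma_{sym}$ and $P_3$ becomes a corner bounded by $\Gamma_{wing}$ and its mirror image, carrying homogeneous Neumann data on both sides. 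I would then bound $|D\psi(P_3)|$ by trapping $\psi$ near $P_3$ between the explicit comparison functions $w^+$ and $\tilde{w}^-$ constructed in \Cref{lemma:inf-lip estimates} from the exact solutions $w^{\bm{\eta}}$, whose gradients are uniformly bounded through \eqref{eq1:3.26}. The corners $P_1,P_2,P_5$ adjacent to the Dirichlet boundary are milder and can be dominated by the same barriers together with the Dirichlet data.

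The hard part will be making the barrier argument at $P_3$ effective and uniform in $\mu$ and $\varepsilon$: the crucial geometric input is that the opening angle of $\Omega$ at $P_3$ be compatible with the barriers controlling the normal derivative there, a convexity-type condition that is precisely the delicate issue flagged in \Cref{sec:5}. Once the derivative is bounded at $P_3$ (and at the remaining corners), the preceding maximum-principle reduction yields \eqref{eq1:4} with a constant independent of $\mu$ and $\varepsilon$.
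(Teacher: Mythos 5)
Your interior reduction, your treatment of the relative interiors of $\Gamma_{wing}$ and $\Gamma_{sym}$ (the identity $Dv\cdot\bm{\nu}=(D\psi\cdot\bm{\tau})D^2\psi[\bm{\tau},\bm{\nu}]=0$ plus Hopf's lemma is a valid substitute for the paper's even-reflection argument), and your control of $|D\psi|$ on the Dirichlet arcs via the barriers $\psi^{\pm}$ all match the paper's proof in substance. The genuine gap is at the corner $P_3$, which you correctly identify as the one point not covered by the interior argument or by Hopf's lemma, but which you then propose to handle by trapping $\psi$ between $w^{+}$ and $\tilde{w}^{-}$ near $P_3$. That step would fail: a two-sided barrier controls the gradient of the trapped function only at points where the function \emph{touches} one of the barriers, and by \eqref{eq1:4.19} one has $1+\varepsilon<\tilde{w}^{-}\leq w\leq w^{+}$ with $\tilde{w}^{-}\geq 1+\varepsilon+\delta$ at $P_3$, so $w$ sits strictly between the barriers there and no gradient information can be extracted. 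The convexity/opening-angle worry you attach to this step is a symptom of the approach being the wrong one, not a technical obstacle to be overcome.

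The paper's resolution at $P_3$ is elementary and you should adopt it: by hypothesis $\psi\in C^{1}(\bar{\Omega}\setminus\overline{\Gamma_{cone}^{\infty}\cup\Gamma_{cone}^{\sigma}})$, so $D\psi$ is continuous at $P_3$; passing to the limit along $\Gamma_{wing}$ and along $\Gamma_{sym}$ in the two Neumann conditions gives $D\psi(P_3)\cdot\bm{\nu}_{w}=0$ and $D\psi(P_3)\cdot\bm{\nu}_{sy}=0$, and since $\bm{\nu}_{w}=(1,\sin\theta\tan\sigma)$ and $\bm{\nu}_{sy}=(0,-1)$ are linearly independent, $D\psi(P_3)=0$ (the paper phrases this as $Dw=0$ at $P_3$, which is equivalent via \eqref{eq3:3.15} and the $L^\infty$ bound on $w$). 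With that observation the supremum of $|D\psi|$ over $\partial\Omega$ is attained on $\Gamma_{cone}^{\infty}\cup\Gamma_{cone}^{\sigma}$, where your barrier argument (identical to \Cref{lemma: lip-estimate}, using \eqref{eq1:4.17}--\eqref{eq1:4.18}) gives a bound independent of $\mu$ and $\varepsilon$; the remaining corners $P_1$, $P_2$, $P_5$ lie on the closure of the Dirichlet arcs and are covered by that same estimate. No separate convexity condition at $P_3$ is needed.
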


\begin{proof}
	From the discussion of interior Lipschitz estimates in \Cref{lemma: lip-estimate}, we have
	\begin{equation}\label{eq:4.11}
		\|D{\psi}\|_{L^{\infty}(\Omega)}\leq \|D{\psi}\|_{L^{\infty}(\partial\Omega)}.
	\end{equation}
	Since problem \eqref{eq:4.7} is invariant under a rotation transformation and is also of reflection symmetry with respect to the straight boundaries $\Gamma_{wing}$ and $\Gamma_{sym}$, it follows that any point on $\Gamma_{wing}$ and $\Gamma_{sym}$ can be treated as an interior point of the domain by an even extension (for example, see \cite{CQ12}). In addition, we find that the normal vectors of $\Gamma_{wing}$ and $\Gamma_{sym}$ are different. Then $D{w}=0$ at the point $P_3$ follows from the boundary conditions $D{w}\cdot \bm{\nu}_{sy}=0$ on $\Gamma_{sym}$ and $D{w}\cdot \bm{\nu}_{w}=0$ on $\Gamma_{wing}$. Consequently, estimate \eqref{eq:4.11} is reduced to
	\begin{equation*}
		\|D{\psi}\|_{L^{\infty}(\Omega)}\leq \|D{\psi}\|_{L^{\infty}(\Gamma_{cone}^{\infty}\cup\Gamma_{cone}^{\sigma})}.
	\end{equation*}
	Using the argument as in \Cref{lemma: lip-estimate}, along with \eqref{eq1:4.17} and \eqref{eq1:4.18}, we obtain the boundedness of $\|D{\psi}\|_{L^{\infty}(\Gamma_{cone}^{\infty}\cup\Gamma_{cone}^{\sigma})}$. Therefore, this lemma is proved.
\end{proof}

According to \Cref{lemma:inf-lip estimates,lemma: lip}, we now define
\begin{equation*}\label{eq4:11}
	\begin{aligned}
		J^\theta_{\varepsilon}:=\{\mu\in[0,1]:~&\text{such that }\psi_{\mu,\varepsilon} ~\text{satisfies}~\eqref{eq:4.7}\text{ with}~\psi_{\mu,\varepsilon}\geq \sqrt{1+\vert\bm{\xi}\vert^2}+\varepsilon ~\text{in}~\Omega~\text{and}\\
		&\psi_{\mu,\varepsilon}\in C^{0}(\bar{\Omega})\cap C^{1}(\bar{\Omega}\setminus\overline{\Gamma_{cone}^{\infty}\cup\Gamma_{cone}^{\sigma}})\cap C^2(\Omega\cup\Gamma_{sym}\cup\Gamma_{wing})\}.
	\end{aligned}
\end{equation*}

We first prove that for any fixed $\varepsilon>0$, the set $J^\theta_{\varepsilon}$ is closed. When $\mu\in J^\theta_{\varepsilon}$, by \Cref{lemma:inf-lip estimates,lemma: lip}, the function $\psi_{\mu,\varepsilon}$ satisfies estimates
\eqref{eq1:4.17} and \eqref{eq1:4}. Hence the corresponding linearized equation \eqref{eq:3.45} is uniformly elliptic and $\psi_{\mu,\varepsilon}\in Lip(\bar{\Omega})\cap C^{1}(\bar{\Omega}\setminus\overline{\Gamma_{cone}^{\infty}\cup\Gamma_{cone}^{\sigma}})\cap C^2(\Omega\cup\Gamma_{sym}\cup\Gamma_{wing})$. The crucial point here is to improve the regularity of $\psi_{\mu,\varepsilon}$ at the corner point $P_3$. To this end, we apply Lemma 1.3 in \cite{Lieb88} to the neighborhood of the point $P_3$. Then, there exists a positive constant $\kappa=\kappa(\theta,\sigma)\in(0,1)$ such that $|\psi_{\mu,\varepsilon}|^{-1-\kappa}_{2}\leq C$, where the constant $C$ only depends on $\Omega$. For the norm $|\cdot|^{-1-\kappa}_{2}$, we refer the reader to \cite{GH,Lieb88}. Owing to $\|\psi_{\mu,\varepsilon}\|_{c^{1,\kappa}}\leq|\psi_{\mu,\varepsilon}|^{-1-\kappa}_{2}$, the regularity of $\psi_{\mu,\varepsilon}$ at the point $P_3$ is $C^{1,\kappa}$ for $\kappa\in (0,1)$.

It remains to verify that Lemma 1.3 in \cite{Lieb88} is valid for the problem \eqref{eq:3.45} with the boundary conditions as in \eqref{eq:4.7}.  When $\mu\in J^\theta_{\varepsilon}$, the coefficients of equation \eqref{eq:3.45} belongs to $C^{0}(\bar{\Omega}\setminus\overline{\Gamma_{cone}^{\infty}\cup\Gamma_{cone}^{\sigma}})\cap C^1(\Omega)$. In addition, this equation does not have lower order terms, so the conditions $(1.4a)-(1.5d)$ in \cite{Lieb88} are naturally satisfied. Also, since equation \eqref{eq:3.45} is uniformly elliptic and the angle at $P_3$ is equal to $\pi-\arctan(\cot\sigma/\tan\theta)$, there also hold the conditions $(1.6a)-(1.7)$, as required in \cite{Lieb88}.

Furthermore, as mentioned above, any point on $\Gamma_{wing}\cup\Gamma_{sym}$ can be treated as an interior point of the domain. Then it follows form  Theorem 6.17 in \cite{GT01} that $\psi_{\mu,\varepsilon}$ is bounded in $Lip(\bar{\Omega})\cap C^{1,\kappa}(\bar{\Omega}\setminus\overline{\Gamma_{cone}^{\infty}\cup\Gamma_{cone}^{\sigma}})\cap C^\infty(\Omega\cup\Gamma_{sym}\cup\Gamma_{wing})$, which indicates that $J^\theta_{\varepsilon}$ is closed, as shown in \Cref{sec:3.2.5}.

The remaining part of the proof can be completed in much the same way as in \Cref{sec:3.2.5}, so we just omit the details.

We summarize this section by stating the following theorem.

\begin{theorem}\label{thm5}
	Assume that the state $(\rho_{\infty},q_{\infty})$ of the oncoming flow is uniform and supersonic, and the thin wing $W_{\sigma}^{\theta}$ is defined by \eqref{eq4:12}. Then we can find a critical angle $\alpha_0=\alpha_0(\rho_\infty, q_\infty)\in(0,\pi/2)$ so that if $\theta\in(-\alpha_0,0)$, then for any $\alpha\in(0,\alpha_0+\theta)$, there exists $\sigma_{0}=\sigma_0(\rho_\infty, q_\infty,\alpha)\in(0,\pi/2)$ such that, when $\sigma\in[0,\sigma_0]$, there exists a constant  $\kappa=\kappa(\theta,\sigma)\in (0,1)$ and the problem \eqref{eq:1.3}--\eqref{eq:1.4} with \eqref{eq1:1.1}, \eqref{eq1:1.2}, \eqref{eq1:41} admits a piecewise smooth solution
	\begin{equation*}
		\Phi(\tilde{\bm{x}})=\sqrt{B_\infty}\tilde{x}_3\psi\Big(\frac{\tilde{\bm{x}}}{\tilde{x}_3}\Big)
	\end{equation*}
	in the domain $\mathcal{R}_{\sigma}^{\theta}$, satisfying
	\begin{equation*}
		\psi\in Lip(\bar{U})\cap C^{1,\kappa}(\bar{U}\setminus\overline{\Gamma_{cone}^{\infty}\cup\Gamma_{cone}^{\sigma}})\cap C^\infty\big(\bar{U}\setminus(\overline{\Gamma_{cone}^{\infty}\cup\Gamma_{cone}^{\sigma}}\cup\{P_3\})\big)
	\end{equation*}
	and
	\begin{equation*}
		\psi >\sqrt{1+|\tilde{\bm{\xi}}|^{2}} \quad\text{in}~ \bar{\Omega}\setminus\overline{\Gamma_{cone}^{\infty}\cup\Gamma_{cone}^{\sigma}}.
	\end{equation*} 	
	Here $\alpha_0$ and $\sigma_0$ are given by \eqref{eq:2.6} and \eqref{eq:2.7}, respectively, and the constant $B_\infty$ is defined by \eqref{eq1:1}.
\end{theorem}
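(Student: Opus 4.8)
The plan is to mirror the continuation argument of \Cref{sec:3.2.5}, now applied to the mixed boundary value problem \eqref{eq:4.7}, with the a priori estimates supplied by \Cref{lemma:inf-lip estimates,lemma: lip}. First I would fix $\alpha_0=\arcsin(c_\infty/q_\infty)$ and, for the given $\theta\in(-\alpha_0,0)$ and $\alpha\in(0,\alpha_0+\theta)$, record that condition \eqref{eq:4.1}--\eqref{eq:4.2} determines the flat shock and the uniform downstream state exactly as in \Cref{sec:2.1}, yielding $\sigma_0$ and the domain $\Omega$ with its corner at $P_3$ drawn in \Cref{fg21}. After the rotation \eqref{eq:4.4} and the scaling \eqref{eq:4.5} have converted the oblique derivative condition into the Neumann condition on $\Gamma_{wing}$, the task reduces to solving \eqref{eq:4.6}; I would attack it through the one-parameter family \eqref{eq:4.7} and show that $J^\theta_\varepsilon$ is nonempty, open, and closed for each fixed $\varepsilon>0$.

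For nonemptiness, at $\mu=0$ equation \eqref{eq:3.13} is linear and elliptic, so the Neumann--Dirichlet problem \eqref{eq:4.7} is uniquely solvable by the Fredholm alternative, and the maximum principle gives $\psi_{0,\varepsilon}\geq\sqrt{1+|\bm{\xi}|^2}+\varepsilon$. For openness I would pass to the auxiliary variable $z_{\mu,\varepsilon}$ via \eqref{eq:3.6}, linearize equation \eqref{eq1:3.1}, and use that the zeroth-order coefficient $d=\partial_{p_0}H\leq0$ (the monotonicity of $H$ in $z$) forces the maximum principle, hence the invertibility of the linearized operator and, by the implicit function theorem, a neighborhood of $\mu_0$ inside $J^\theta_\varepsilon$. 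Closedness is where the genuinely new work lies: the uniform bounds \eqref{eq1:4.17} and \eqref{eq1:4} render the linearization \eqref{eq:3.45} uniformly elliptic, yet since $P_3$ is a corner at which the two \emph{distinct} Neumann conditions on $\Gamma_{wing}$ and $\Gamma_{sym}$ meet, interior Schauder theory does not apply there. I would instead invoke Lemma 1.3 of \cite{Lieb88} in a neighborhood of $P_3$, checking that the structure conditions $(1.4a)$--$(1.7)$ hold—the equation carries no lower-order terms, is uniformly elliptic, and the corner angle equals $\pi-\arctan(\cot\sigma/\tan\theta)$—to obtain a $C^{1,\kappa}$ bound at $P_3$ with $\kappa=\kappa(\theta,\sigma)\in(0,1)$. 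Away from $P_3$, an even reflection across the straight boundaries $\Gamma_{wing}$ and $\Gamma_{sym}$ turns boundary points into interior points, so Theorem 6.17 of \cite{GT01} supplies the higher regularity, and a compactness argument then closes $J^\theta_\varepsilon$, giving $J^\theta_\varepsilon=[0,1]$ and the existence of $\psi_{1,\varepsilon}$.

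Finally, because the estimates \eqref{eq1:4.17} and \eqref{eq1:4} are independent of $\varepsilon$, I would let $\varepsilon\to0+$ and extract, via Arzel\`a--Ascoli, a limit $\psi$ solving \eqref{eq:4.6}; the strict inequality in \eqref{eq1:4.17} survives to yield $\psi>\sqrt{1+|\bm{\xi}|^2}$ in $\Omega$, ruling out parabolic bubbles and certifying the local uniform ellipticity of the original equation, after which uniqueness follows from the comparison principle of \Cref{lemma5}. Undoing the rotation and scaling then recovers $\Phi$ in the stated conical form. The main obstacle I anticipate is precisely the corner point $P_3$: confirming that Lieberman's corner estimate genuinely applies to the Neumann data on this Lipschitz (non-convex near $P_3$) domain, and that the resulting H\"older exponent $\kappa$ can be chosen uniformly in $\mu$ and $\varepsilon$, is the delicate point on which the closedness of $J^\theta_\varepsilon$—and hence the entire continuity method—depends.
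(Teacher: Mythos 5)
Your proposal follows essentially the same route as the paper: rotation and scaling to reduce the oblique derivative condition to a Neumann condition, the continuity method on the family \eqref{eq:4.7} with the a priori bounds of \Cref{lemma:inf-lip estimates,lemma: lip}, Lieberman's Lemma 1.3 for the $C^{1,\kappa}$ corner estimate at $P_3$ in the closedness step, even reflection across $\Gamma_{wing}$ and $\Gamma_{sym}$ for interior regularity, and the limit $\varepsilon\to0+$ with uniqueness via \Cref{lemma5}. The delicate point you flag---verifying Lieberman's structure conditions at the corner and the $(\mu,\varepsilon)$-uniformity of $\kappa$---is exactly the point the paper addresses, so the plan is sound and matches the paper's argument.
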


\section{Problems involving non-convex domains}\label{sec:5}
\subsection{Pressure wave detached from the leading edges}\label{sec:5.1}
We now give a brief exposition of the case $\sigma\in(\sigma_{0},{\pi}/{2})$, where $\sigma_{0}$ is given by \eqref{eq:2.7}. Let $\Gamma_{cone}^{\infty}$ and $\Gamma_{wing}$ be the pressure wave and the surface of the delta wing, respectively. Since $\Gamma_{cone}^{\infty}$ is a characteristic determined by the oncoming flow, the pressure wave will be detached from the leading edges of the wing if $\sigma\in(\sigma_{0},{\pi}/{2})$ (see \Cref{fg9}), that is, the pressure wave is only attached to the apex but away from the leading edges. Although
there may exist a region of cavitation in the expansion region, it is not difficult to imagine that, once $\sigma>\sigma_{0}$, the flow will spread freely from one side into the other side through the leading edges. Then the phenomenon of cavitation disappears suddenly. In other words, this problem can still be discussed in the case $\sigma\in(\sigma_{0},{\pi}/{2})$.

Since the location of $\Gamma_{cone}^{\infty}$ determined by \eqref{eq1:1.7} is unchanged as $\sigma$ increases from $\sigma_0$ to ${\pi}/2$, we only need to focus on the state of the flow behind $\Gamma_{cone}^{\infty}$. In comparison with the case $\sigma\in(0,\sigma_{0}]$, there arise some new difficulties for $\sigma\in(\sigma_{0},\pi/2)$. Firstly, the boundary condition on $\Gamma_{wing}$ cannot be reduced to the Neumann condition as in \Cref{sec:4}. Secondly, the domain bounded by $\Gamma_{cone}^{\infty}$ and $\Gamma_{wing}$ becomes non-convex. Hence we need other treatments to obtain a prior estimates for the solution.  Finally, we may not expect the regularity of the solution to be $C^{0,1}$ at the points $P_{5}$ and $P'_{5}$ in \Cref{fg9}. Then it will be rather difficult to determine the type of equation \eqref{eq:2.41} behind the pressure wave $\Gamma_{cone}^{\infty}$.

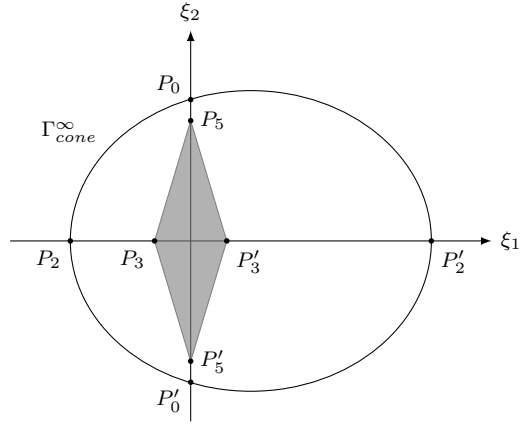
\begin{figure}[H]
	\centering
	\begin{tikzpicture}[smooth, scale=0.8]
	\draw (5,4) ellipse (3 and 2.5);
	\draw [-latex](1,4)--(9,4)node[right]{\footnotesize$\xi_{1}$};
	\draw [-latex](4,1)--(4,7.5)node[above]{\footnotesize$\xi_{2}$};
	\draw [draw=gray, fill=gray, fill opacity=0.6](4,2)--(3.4,4)--(4,6)--(4.6,4)--(4,2);
	\node at (3.4,4)[below left]{\footnotesize $P_{3}$};
	\node at (4,6)[right]{\footnotesize $P_{5}$};
	\node at (4.6,4)[below right]{\footnotesize $P'_{3}$};
	\node at (4,2)[right]{\footnotesize $P'_{5}$};
	\node [below left, font=\footnotesize] at (2,4) {$P_{2}$};
	\node [below left, font=\footnotesize] at (4,1.7) {$P'_{0}$};
	\node[above left, font=\footnotesize] at (4,6.3) {$P_{0}$};
	\node [below right, font=\footnotesize] at (8,4) {$P'_{2}$};
	\node [above left, font=\footnotesize] at (2.6,5.5) {$\Gamma_{cone}^{\infty}$};
	\fill(4,1.65)circle(1.3pt);
	\fill(4,6.35)circle(1.3pt);
	\fill(2,4)circle(1.3pt);
	\fill(8,4)circle(1.3pt);
	\fill(4,6)circle(1.3pt);
	\fill(4,2)circle(1.3pt);
	\fill(3.4,4)circle(1.3pt);
	\fill(4.6,4)circle(1.3pt);
	\end{tikzpicture}
	\caption{Nonconvex domain produced by the detached pattern.}
	\label{fg9}
\end{figure}

\subsection{Delta wings of asymmetric cross-sections}
In the previous sections, we always assume that the delta wing is symmetrical about the $x_{1}Ox_{3}$-plane. So it is also natural to consider an asymmetric one. As before, we only discuss the case of attached shocks and adopt the previous notations as well.

\begin{figure}[H]
	\centering
	\subfigure[Asymmetric triangular plate with a convex domain.]{
		\begin{tikzpicture}[smooth, scale=0.8]
		\draw  [-latex](0.5,2)--(5.5,2) node [right] {\footnotesize$\xi_{1}$};
		\draw  [-latex](4,-1)--(4,6.5) node [above] {\footnotesize$\xi_{2}$};
		\draw  (2.3,3.1)--(4,6) node  [right]  {\footnotesize$P_{5}$};
		\draw  (2,2)   to [out=90, in =240] (2.3,3.1) node [above left] {\footnotesize$P_{1}$} ;
		\draw [dashed] (2.3,3.1) to [out=50, in =180]  (4,4) node [right] {\footnotesize$P_{4}$};
		\draw (3.2,-0.02)--(4,-0.5);
		\draw (2,2) to [out=-90,in=144] (3.2,-0.02);
		\draw [dashed] (3.2,-0.02)  to [out=-36, in =180] (4,-0.2) ;
		\node at (4.4,-0.6) {\footnotesize$P'_{5}$} ;
		\node at (4.4,0) {\footnotesize$P'_{4}$} ;
		\node at (2,2) [below left] {\footnotesize$P_{2}$};
		\node at (4,2) [below right] {\footnotesize$O(P_{3})$};
		\node at (4.6,3) {\footnotesize$\Gamma_{wing}$};
		\node at (1.2,2.5) {\footnotesize$\Gamma_{cone}^{\infty}$};
		\node at (3.2,-0.02)[below left] {\footnotesize$P'_{1}$};
		\node at (3.2,2.4){\footnotesize$\Omega$};
		\draw (4,6)--(4,0);
		\fill(4,4)circle(1.3pt);
		\fill(2.3,3.1)circle(1.3pt);
		\fill(2,2)circle(1.3pt);
		\fill(4,-0.5)circle(1.3pt);
		\fill(4,-0.2)circle(1.3pt);
		\fill(3.2,-0.02)circle(1.3pt);
		\fill(4,6)circle(1.3pt);
		\end{tikzpicture}
		\label{fg22}
	}\qquad
	\subfigure[Asymmetric thin delta wing with a non-convex domain.]{
		\begin{tikzpicture}[smooth, scale=0.8]
		\draw  [-latex](0.5,2)--(5.5,2) node [right] {\footnotesize$\xi_{1}$};
		\draw  [-latex](4,-1)--(4,6.5) node [above] {\footnotesize$\xi_{2}$};
		\draw  (2.3,3.1)--(4,6) node  [right]  {\footnotesize$P_{5}$};
		\draw  (2,2)   to [out=90, in =240] (2.3,3.1) node [above left] {\footnotesize$P_{1}$} ;
		\draw [dashed] (2.3,3.1) to [out=50, in =170]  (3.62,3.8) ;
		\draw (3.2,-0.02)--(4,-0.5);
		\draw (2,2) to [out=-90,in=144] (3.2,-0.02);
		\draw [dashed] (3.2,-0.02)  to [out=-36, in =190] (3.89,-0.16) ;
		\node at  (4,3.8) [right] {\footnotesize$P_{4}$} ;
		\node at (4.4,-0.6) {\footnotesize$P'_{5}$} ;
		\node at (4.4,0) {\footnotesize$P'_{4}$} ;
		\node at (2,2) [below left] {\footnotesize$P_{2}$};
		\node at (4,2) [below right] {\footnotesize$O$};
		\node at (3,1) {\footnotesize$\Gamma_{wing}$};
		\node at (1.2,2.5) {\footnotesize$\Gamma_{cone}^{\infty}$};
		\node at (3.2,-0.02)[below left] {\footnotesize$P'_{1}$};
		\draw (4,6)--(4,0);
		\draw [draw=gray, fill=gray, fill opacity=0.6] (3.3,2)--(4,6)--(4,-0.5)--(3.3,2);
		\node at (3.45,1.5) [above left] {\footnotesize$P_{3}$};
		\node at (2.7,2.4){\footnotesize$\Omega$};
		\fill(3.62,3.8)circle(1.3pt);
		\fill(2.3,3.1)circle(1.3pt);
		\fill(2,2)circle(1.3pt);
		\fill(4,-0.5)circle(1.3pt);
		\fill(3.89,-0.16)circle(1.3pt);
		\fill(3.2,-0.02)circle(1.3pt);
		\fill(4,6)circle(1.3pt);
		\end{tikzpicture}
		\label{fg12}
	}
	\caption{Connection between ``symmetry'' and ``convexity''.}
\end{figure}
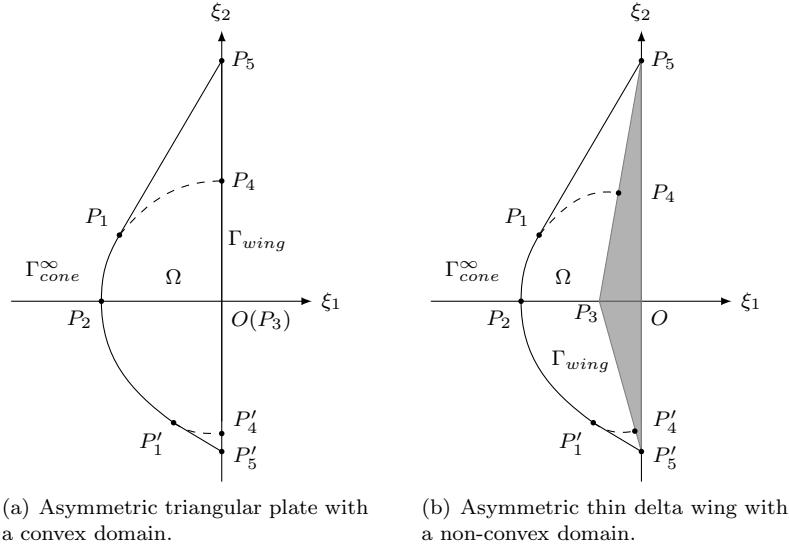

Let us begin with an asymmetric triangular plate $W_{\sigma,\hat{\sigma}}$, given by
\begin{equation*}
	W_{\sigma,\hat{\sigma}}=\{(x_1,x_2,x_3):-x_{3}\cot\hat{\sigma} <x_{2}<x_{3}\cot\sigma,x_{1}=0,x_{3}>0\},
\end{equation*}
where $\sigma,\hat{\sigma}\in(0,{\pi}/{2})$. For any fixed $\alpha\in(0,\alpha_0)$, as what we did in \Cref{sec:2}, we can derive the location of the shock and the uniform flow state outside the Mach cone when $\sigma$ and $\hat{\sigma}$ are less than a certain critical angle. Then we examine the flow inside the Mach cone. In the conical coordinates, since the corresponding domain $\Omega$ can be extended to a convex domain by an even reflection with respect to the $\xi_2$-axis (see \Cref{fg22}), we can obtain the flow state by applying the method in \Cref{sec:3}. Therefore, the same result as \Cref{thm: Main} holds for the asymmetric triangular plate.

However, for a thin wing of asymmetric cross-sections (see \Cref{fg12}), the domain $\Omega$ becomes completely non-convex, and thus we encounter the same difficulties as in \Cref{sec:5.1}. We will consider these problems in the future.

\appendix
\section{Shock polar for a Chaplygin gas}\label{sec:appendix a}
It is shown in \cite{Serre11} that a pressure wave between two constant states must be tangent to the sonic circles, whose centers are the velocities and radii are the sound speeds.  With this property, we now characterize the shock polar for a Chaplygin gas, by studying the model of supersonic flow around a corner.

\begin{figure}[H]
	\centering
	\subfigure[Showing an oblique shock.]{
		\begin{tikzpicture}[scale = 1.05, smooth]
		\draw  [-latex](-1.4,0)--(3.5,0) node [right] {\footnotesize$x_{1}$};
		\draw  [-latex](0,-1)--(0,2) node [above] {\footnotesize$x_{2}$};
		\node [below right, font=\footnotesize] at (0,0) {$O$};
		\filldraw [draw=gray, fill=gray, fill opacity=0.5] (0,0)--(3,1)--(3,1|-0,0)--cycle;
		\draw (0,0)--(2.3,1.8) node[right, font=\footnotesize] {$\mathcal{S}$};
		\draw [thick, -stealth] (-1.2,0.3)--(-0.3,0.3);
		\node[above, font=\footnotesize] at (-0.7,0.3) {$(u_{0},0)$} ;
		\draw [thick, -stealth] (1,0.65)--(1.6,0.85) node[above right, font=\footnotesize] {$(u_{1},v_{1})$};
		\draw (0:0.75) arc (0:19:0.75);
		\node [right, font=\footnotesize] at (0.75,0.15) {$\alpha$};
		\draw (0:0.4) arc (0:40:0.4);
		\node [right, font=\footnotesize] at (0.35,0.3) {$\beta$};
		\end{tikzpicture}
		\label{fg2}
	}\quad
	\subfigure[Showing an oblique rarefaction wave.]{
		\begin{tikzpicture}[scale = 1.05, smooth]
		\draw  [-latex](-1.4,1)--(3,1) node [right] {\footnotesize$x_{1}$};
		\draw  [-latex](0,-0.5)--(0,2.6) node [above] {\footnotesize$x_{2}$};
		\node [below left, font=\footnotesize] at (0,1) {$O$};
		\filldraw [draw=gray, fill=gray, fill opacity=0.5] (-1.38,1)--(0,1)--(3,0)--(3,0-|-1.38,0)--cycle;
		\draw (0,1)--(1.9,2.5) node [right, font=\footnotesize] {$\mathcal{S}$};
		\draw [thick, -stealth] (-1.2,1.3)--(-0.3,1.3);
		\node[above, font=\footnotesize] at (-0.7,1.3) {$(u_{0},0)$} ;
		\draw [thick, -stealth] (1.3,0.9)--(2.2,0.6) node[right, font=\footnotesize] {$(u'_{1},v'_{1})$} ;
		\begin{scope}[yshift=1cm]
		\draw (0:0.75) arc (0:-19:0.75);
		\end{scope}
		\node [right, font=\footnotesize] at (0.65,0.85) {$\alpha'$};
		\begin{scope}[yshift=1cm]
		\draw (0:0.4) arc (0:40:0.4);
		\end{scope}
		\node [right, font=\footnotesize] at (0.35,1.2) {$\beta$};
		\end{tikzpicture}
		\label{fg5}
	}
	\caption{Supersonic flow around a corner.}
\end{figure}
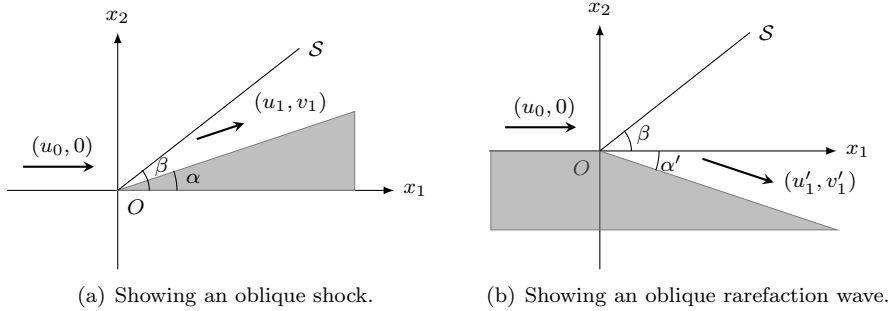

Let us first discuss the case of shocks. We assume that a shock line $\mathcal{S}$ in the $(x_{1},x_{2})$-plane is straight and passes through the origin $O$, and moreover the oncoming flow state $(\rho_{0},(u_{0},0))$ and the outgoing flow state $(\rho_{1},(u_{1},v_{1}))$ on the both sides are constant (see \Cref{fg2}). Given $(\rho_{0},(u_{0},0))$ with $u_{0}>c_{0}$ and $c_0={\sqrt{A}}/{\rho_{0}}>0$, we need to derive $(\rho_{1},(u_{1},v_{1}))$ under the condition
\begin{equation}\label{eq:A.1}
	\tan\alpha=\frac{v_{1}}{u_{1}}.
\end{equation}

In the $(u,v)$-plane, let $C_{0}$ be the circle with center $O_{0}(u_{0},0)$ and radius $c_{0}$, and $C_{1}$ the circle with center $O_{1}(u_{1},v_{1})$ and radius $c_{1}=\sqrt{A}/\rho_{1}$. It follows from the property mentioned above that the shock line $\mathcal{S}$ is tangent to the circle $C_{0}$ at a point $P$. Then
\begin{equation}\label{eq:A.2}
	\sin\beta=\frac{c_{0}}{u_{0}},
\end{equation}	
where $\beta>0$ is the angle between the shock line $\mathcal{S}$ and the velocity of the oncoming flow. To get the state $(\rho_{1},(u_{1},v_{1}))$, we first determine the position of $O_{1}$. Note that the circle $C_{1}$ is also tangent to the shock line $\mathcal{S}$ at the point $P$. This can be done immediately by using \eqref{eq:A.1}, as shown in \Cref{fg3}. Then, from \eqref{eq:A.1} and \eqref{eq:A.2}, we obtain the following explicit expressions:
\begin{align}
	c_{1}&=\frac{c_0-\tan\alpha\sqrt{u^2_0-c^2_0}}{c_0\tan\alpha+\sqrt{u^2_0-c^2_0}}\sqrt{u^{2}_{0}-c^{2}_{0}}\label{eq1:A.2}\\
	\shortintertext{and}
	u_{1}&=\frac{u_{0}\sqrt{u^2_0-c^2_0}}{\sqrt{u^2_0-c^2_0}+c_{0}\tan\alpha},\quad v_{1}=\frac{u_{0}\tan\alpha\sqrt{u^2_0-c^2_0}}{\sqrt{u^2_0-c^2_0}+c_{0}\tan\alpha}.\label{eq:A.3}
\end{align}	
These equations show that, for a given state $(\rho_{0},(u_{0},0))$, the angle $\alpha$ determines the state $(\rho_{1},(u_{1},v_{1}))$. Moreover, the trajectory of the point $O_1$ in the $(u,v)$-plane describes the shock polar $O_{0}P$ as $\alpha$ varies.

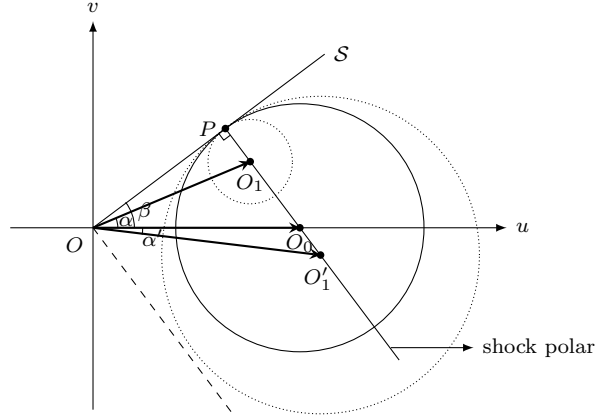
\begin{figure}[H]
	\centering
	\begin{tikzpicture}[scale = 1.1, smooth]
	\draw [-latex] (-1,0)--(5,0) node[right] {\footnotesize$u$};
	\draw [-latex] (0,-2.2)--(0,2.5) node[above] {\footnotesize$v$};
	\node [below left, font=\footnotesize] (0,0) {$O$};
	\draw [thick,-stealth] (0,0)--(2.5,0) node[below=-1pt, font=\footnotesize] {$O_0$};
	\draw (2.5,0) circle (1.5);
	\draw (0,0)--(1.6,1.2)--(2.8,2.1) node[right, font=\footnotesize] {$\mathcal{S}$};
	\draw (1.6,1.2)--(2.5,0)--(3.7,-1.6);
	\draw (1.52,1.14)--(1.58,1.06)--(1.66,1.12);
	\draw [dashed] plot[domain=0:1.7, samples=50] (\x, -4/3*\x);
	\draw [thick,-stealth] (0,0)--(1.9,0.8) node[below=1pt, font=\footnotesize] {$O_1$};
	\draw [densely dotted](1.9,0.8) circle (0.51);
	\draw [thick,-stealth] (0,0)--(2.75,-0.33) node[left=1pt, below=1pt, font=\footnotesize] {$O_1'$};
	\draw [densely dotted](2.75,-0.33) circle (1.92);
	\draw (0:0.3) arc (0:22:0.3);
	\node [right, font=\footnotesize] at (0.2,0.08) {$\alpha$};
	\draw (0:0.6) arc (0:-8:0.6);
	\node [right, font=\footnotesize] at (0.48,-0.12) {$\alpha'$};
	\draw (0:0.5) arc (0:38:0.5);
	\node [right, font=\footnotesize] at (0.43,0.2) {$\beta$};
	\fill(2.5,0)circle(1.3pt);
	\fill(1.9,0.8)circle(1.3pt);
	\fill(2.75,-0.33)circle(1.3pt);
	\fill(1.6,1.2)circle(1.3pt);
	\node [left, font=\footnotesize] at (1.6,1.2) {$P$};
	\draw [-latex](3.6,-1.45)--(4.6,-1.45) node[right] {\footnotesize{shock polar}};
	\end{tikzpicture}
	\caption{Shock polar for a Chaplygin gas.}
	\label{fg3}
\end{figure}

Similarly, we assume that a rarefaction wave $\mathcal{S}$ in the $(x_{1},x_{2})$-plane is straight and passes through the origin $O$, and moreover the oncoming flow state $\big(\rho_{0},(u_{0},0)\big)$ and  the outgoing flow state $\big(\rho'_{1},(u'_{1},v'_{1})\big)$ on the both sides are constant (see \Cref{fg5}). For a given state $(\rho_{0},(u_{0},0))$ with $u_{0}>c_{0}$, using the argument as above, we have
\begin{equation*}
	\begin{aligned}
		c'_{1}&=\dfrac{c_0-\tan\alpha'\sqrt{u^2_0-c^2_0}}{c_0\tan\alpha'+\sqrt{u^2_0-c^2_0}}\sqrt{u^{2}_{0}-c^{2}_{0}},\\
		u'_{1}&=\frac{u_{0}\sqrt{u^{2}_{0}-c^{2}_{0}}}{\sqrt{u^{2}_{0}-c^{2}_{0}}+c_{0}\tan\alpha'},\\
		v'_{1}&=\frac{u_{0}\tan\alpha'\sqrt{u^{2}_{0}-c^{2}_{0}}}{\sqrt{u^{2}_{0}-c^{2}_{0}}+c_{0}\tan\alpha'},
	\end{aligned}
\end{equation*}
where $\alpha'<0$ is the angle between the oncoming flow and the outgoing flow.

In conclusion, the shock polar for a Chaplygin gas is a half-line, extending infinitely from the tangent point $P$ and always perpendicular to the pressure wave $\mathcal{S}$ (see \Cref{fg3}).

It is well known that there may occur a phenomenon of concentration or cavitation for a Chaplygin gas. Now, using the shock polar discussed above, we impose some restriction on the oncoming flow to avoid these phenomena.

From \Cref{fg3}, we see that as $\alpha\rightarrow\beta$, the sound speed  $c_{1}\rightarrow 0$. Then the angle $\beta$ must be greater than $\alpha$, i.e., $\sin\beta>\sin\alpha$.  Using this inequality and  \eqref{eq:A.2}, we have $u_{0}<{c_{0}}/{\sin\alpha}$.  This condition means that for a fixed $\alpha$, if the oncoming flow passes the wing too quickly, that is, $u_{0}\geq{c_{0}}/{\sin\alpha}$, then the flow between the shock and the wedge will concentrate at once. Such a phenomenon is called concentration (see \cite{Bren05}). To avoid this phenomenon, the oncoming flow should satisfy the condition
\begin{equation}\label{eq:A.4}
	c_{0}<u_{0}<\frac{c_{0}}{\sin\alpha}.
\end{equation}	

Also, \Cref{fg3} shows that the sound speed $c'_{1}=\sqrt{A}/{\rho'_{1}}\rightarrow +\infty$ as $\alpha'\rightarrow\beta-{\pi}/{2}$. In other words, if the angle between the rarefaction wave and the velocity of the outgoing flow approaches $\pi/2$, then a phenomenon of cavitation occurs. Hence the angle $\alpha'$ should be greater than $\beta-{\pi}/{2}$, i.e., $\sin\beta<\cos\alpha'$. Then, it follows from this condition and \eqref{eq:A.2} that
\begin{equation}\label{eq:A.5}
	u_{0}>\frac{c_{0}}{\cos\alpha'}.
\end{equation}

\section{Mach cones in 3-D potential flow}\label{sec:appendix b} For the reader's convenience, we calculate the explicit expression of Mach cones for the three dimensional potential equation (also see Lemma 1.1 in \cite{CY15}). From \eqref{eq:2.31}, we have
\begin{equation}\label{eq:B.1}
	\begin{cases}
		|\nabla_{\bm{x}}\Phi\cdot\bm{\zeta}(\tau)|=c,\\
		\bm{x}\cdot\bm{\zeta}(\tau)=0,\\
		\bm{x}\cdot\bm{\zeta}'(\tau)=0,
	\end{cases}
\end{equation}
where $|\bm{\zeta}(\tau)|=1$ and $\tau\in[0,2\pi)$. By eliminating the parameter $\tau$, we reduce \eqref{eq:B.1} to the form
\begin{multline}\label{eq:B.2}
	\big((q^{2}-v^{2}_{1})x_{1}-v_{1}v_{2}x_{2}-v_{1}v_{3}x_{3}\big)^{2}+q^{2}(v_{3}x_{2}-v_{2}x_{3})^{2}\\=\frac{c^{2}(q^{2}-v^{2}_{1})}{q^{2}-c^{2}}\Big(v_{1}x_{1}+v_{2}x_{2}+v_{3}x_{3}\Big)^{2}
\end{multline}
with $\nabla_{\bm{x}}\Phi=(v_{1},v_{2},v_{3})$ and $q^{2}=v^{2}_{1}+v^{2}_{2}+v^{2}_{3}$. A tedious computation shows that the left-hand side of \eqref{eq:B.2} can be rewritten as
\begin{equation*}
	(q^{2}-v^{2}_{1})\big(q^{2}(x^{2}_{1}+x^{2}_{2}+x^{2}_{3})-(v_{1}x_{1}+v_{2}x_{2}+v_{3}x_{3})^{2}\big).
\end{equation*}
This further reduce \eqref{eq:B.2} to the form
\begin{align*}
	(q^{2}-c^{2})(x^{2}_{1}+x^{2}_{2}+x^{2}_{3})=(v_{1}x_{1}+v_{2}x_{2}+v_{3}x_{3})^{2},
\end{align*}
or equivalently,
\begin{equation}\label{eq:B.5}
	|\nabla_{\bm{x}}\Phi\cdot\bm{x}|^{2}-(|\nabla_{\bm{x}}\Phi|^{2}-c^{2})|\bm{x}|^{2}=0
\end{equation}
by the definition of $\Phi$. Hence,
\begin{equation}\label{eq:B.6}
	|\nabla_{\bm{x}}\Phi|^{2}-\vert \nabla_{\bm{x}}\Phi\cdot\frac{\bm{x}}{|\bm{x}|}\vert^{2}=c^{2}\quad\text{for}~\bm{x}\in\mathbb{R}^{3}\setminus\{0\}.
\end{equation}

\section*{Acknowledgments} The authors are indebted to Professor Shuxing Chen for many valuable comments. Bingsong Long also acknowledges the Center for Mathematical Sciences of HUST for the invitation and hospitality.


\end{document}